\documentclass[11pt]{book}
\usepackage[T1]{fontenc}
\usepackage[utf8]{inputenc}
\usepackage{amsmath,amssymb,amsthm,mathtools,booktabs,enumitem}
\usepackage[colorlinks=true,linkcolor=blue,citecolor=blue,urlcolor=blue]{hyperref}
\usepackage[margin=2.7cm]{geometry}
\usepackage{longtable}
\usepackage{array}
\usepackage{graphicx}

\theoremstyle{plain}
\newtheorem{theorem}{Theorem}[chapter]
\newtheorem{proposition}[theorem]{Proposition}
\newtheorem{lemma}[theorem]{Lemma}
\newtheorem{corollary}[theorem]{Corollary}
\theoremstyle{definition}
\newtheorem{definition}[theorem]{Definition}
\newtheorem{example}[theorem]{Example}
\theoremstyle{remark}
\newtheorem{remark}[theorem]{Remark}
\newtheorem{notation}[theorem]{Notation}

\newcommand{\Z}{\mathbb Z}
\newcommand{\B}{\mathcal B}
\newcommand{\Ir}{\mathcal I}
\newcommand{\Sr}{\mathcal S}
\newcommand{\PS}{\mathcal P}
\DeclareMathOperator{\Fix}{Fix}
\DeclareMathOperator{\Stab}{Stab}
\DeclareMathOperator{\Orb}{Orb}
\newcommand{\stir}[2]{\genfrac{\{}{\}}{0pt}{}{#1}{#2}}
\newcommand{\ffall}[2]{{#1}^{\underline{#2}}}
\numberwithin{equation}{chapter}
\setlist[enumerate,1]{label=\textup{(\roman*)}}

\title{\textbf{The Enumeration of Binary Relations}\\[4pt]
\large Orbits, Lattice Homomorphisms, and Generating Functions\\[2pt]
\large A Self-Contained Monograph}
\author{Mamadou S. Bah\\[4pt]
\small Ancien professeur, Universit\'e de Kankan, Guin\'ee. Conakry, Guin\'ee}
\date{2026}

\begin{document}

\maketitle

\chapter*{Preface}
\addcontentsline{toc}{chapter}{Preface}

This monograph merges and substantially expands two papers of the author:
\emph{On the enumeration of lattice homomorphisms of Boolean algebras}
(ICTP preprint IC/97/180, 1997), which classifies binary relations between
two finite sets $N$ and $X$ according to injectivity and surjectivity
properties on each side, identifies them with lattice homomorphisms of
Boolean algebras and with families of subsets, and enumerates all of these
objects together with their orbits under the symmetric groups $S_N$, $S_X$,
and $S_N\times S_X$; and \emph{Generating functions for the enumeration of
binary relations and Boolean-algebra homomorphisms under symmetric group
actions} (2026), which determines the generating functions of every one of
those counts, uncovers Bell numbers and Euler's product inside the classical
table, and proves that the hardest column of the table is quasi-polynomial.

The two original papers assumed of the reader a working knowledge of finite
group actions, the Cauchy--Frobenius--Burnside lemma, Stirling and Bell
numbers, and the algebra of generating functions. Here every one of these
tools is developed from first principles in Part~0, so that the book requires
of its reader only the most basic facts of set theory, functions, and the
definition of a group. Every proof in Parts~I and~II is written out in full,
with no step left as ``clearly'' or ``it is easy to see,'' so that an
undergraduate who has completed a first course in abstract algebra and a
first course in combinatorics can read the book start to finish without
consulting any other source. In the few places where the original papers
compressed an argument into a single line, or invoked an external reference,
the argument is expanded here into a complete, self-contained proof, and the
external fact is either proved in Part~0 or proved on the spot.

One correction to the original material is made and flagged where it occurs:
Theorem~B of the 1997 paper mislabeled the meet-unitary (covering) row as
``join-unitary''; the labels are corrected in Chapter~11 (Remark
\ref{rem:correctionB}), consistent with the 2026 paper's independent
discovery of the same error.

The book is organized as follows. Part~0 (Chapters~1--5) builds the
foundational machinery: elementary counting, group actions and Burnside's
lemma, Stirling and Bell numbers, the algebra of generating functions, and
M\"obius inversion on the Boolean lattice. Part~I (Chapters~6--12) is the
enumeration paper, rewritten with full detail: binary relations, families of
subsets, and lattice homomorphisms are shown to be three faces of the same
object; every combinatorial type (injective, surjective, bijective, on each
side) is enumerated raw and then up to the symmetry of relabeling $N$,
relabeling $X$, or both. Part~II (Chapters~13--17) is the generating-function
paper, rewritten with full detail: every count of Part~I is repackaged as a
coefficient in a generating function, revealing geometric factors, an
exponential generating function $e^{z+w+zw}$, Bell numbers, Euler's product,
and a proof that the hardest column of the table is a quasi-polynomial.
Chapter~18 closes with a brief outlook connecting this classical table to its
$q$-analog, developed in a separate paper, without depending on it.

\tableofcontents

\part{Foundations}

\chapter{Sets, Counting, and Bijections}\label{ch:sets}

This chapter collects, with proofs, the most elementary counting facts used
throughout the book. Readers who have seen a first course in combinatorics
may skim it; it is included so that no fact used later is left unproved.

\section{Functions, injections, surjections, bijections}

\begin{definition}
Let $N$ and $X$ be sets. A \emph{function} (or \emph{mapping}) $f\colon N\to X$
assigns to each $i\in N$ a unique element $f(i)\in X$. The function $f$ is:
\begin{enumerate}
\item \emph{injective} (or one-to-one) if $f(i)=f(j)$ implies $i=j$;
\item \emph{surjective} (or onto) if for every $a\in X$ there is some $i\in N$
with $f(i)=a$;
\item \emph{bijective} if it is both injective and surjective.
\end{enumerate}
\end{definition}

\begin{proposition}\label{prop:bijcard}
If $N$ and $X$ are finite sets and there is a bijection $f\colon N\to X$,
then $|N|=|X|$. Conversely if $|N|=|X|=n$, then bijections $N\to X$ exist,
and there are exactly $n!$ of them.
\end{proposition}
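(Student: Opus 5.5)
We need a definition of cardinality for finite sets. Take $|N|=n$ to mean that there is a bijection $\{1,\dots,n\}\to N$. With this convention the statement splits into three parts: invariance of size, existence of a bijection, and the count $n!$. We prove them in that order.

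\emph{First assertion.} Suppose $f\colon N\to X$ is a bijection, $|N|=n$ via a bijection $\alpha\colon\{1,\dots,n\}\to N$, and $|X|=m$ via a bijection $\beta\colon\{1,\dots,m\}\to X$. Then $\beta^{-1}\circ f\circ\alpha$ is a bijection $\{1,\dots,n\}\to\{1,\dots,m\}$. It therefore suffices to prove the pigeonhole lemma: if there is an injection $\{1,\dots,n\}\to\{1,\dots,m\}$, then $n\le m$. Applying this to the bijection and to its inverse gives $n=m$.

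We prove the lemma by induction on $m$. For $m=0$ the codomain is empty, so the domain must be empty and $n=0$. For the inductive step, let $g\colon\{1,\dots,n\}\to\{1,\dots,m+1\}$ be injective with $n\ge1$. Compose $g$ with the transposition of $\{1,\dots,m+1\}$ that swaps $g(n)$ and $m+1$. The result is still injective and sends $n$ to $m+1$. By injectivity it sends $\{1,\dots,n-1\}$ into $\{1,\dots,m\}$. The inductive hypothesis then gives $n-1\le m$, so $n\le m+1$. This lemma is the only genuinely foundational point, and it is the main obstacle, because everything else reduces to it. The delicate part is the swap, which makes sure the last element lands on $m+1$.

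\emph{Existence.} If $|N|=|X|=n$ via $\alpha$ and $\beta$ as above, then $\beta\circ\alpha^{-1}\colon N\to X$ is a bijection.

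\emph{Counting.} We show by induction on $n$ that there are exactly $n!$ bijections between any two sets of size $n$. For $n=0$ there is exactly one bijection, the empty function, and $0!=1$.

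For the inductive step, fix an element $i_0\in N$ and sort the bijections $f\colon N\to X$ by the value $a=f(i_0)$. For each $a\in X$, restriction to $N\setminus\{i_0\}$ gives a map $f\mapsto f|_{N\setminus\{i_0\}}$. We check that this map is a bijection onto the set of bijections $N\setminus\{i_0\}\to X\setminus\{a\}$:
\begin{enumerate}
\item The restricted map lands in $X\setminus\{a\}$ by injectivity of $f$, and it is onto $X\setminus\{a\}$ by surjectivity of $f$.
\item Conversely, any bijection $N\setminus\{i_0\}\to X\setminus\{a\}$ extends uniquely by setting $i_0\mapsto a$.
\end{enumerate}
Both $N\setminus\{i_0\}$ and $X\setminus\{a\}$ have size $n-1$, which we see by restricting the enumerating bijections after a swap. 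So by induction each value of $a$ contributes $(n-1)!$ bijections. Summing over the $n$ choices of $a$ gives $n\cdot(n-1)!=n!$.
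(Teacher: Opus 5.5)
Your proof is correct, but it takes a more foundational route than the paper's.

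\emph{First assertion.} The paper lists $N=\{i_1,\dots,i_n\}$, notes that $f(i_1),\dots,f(i_n)$ enumerates $X$ with no repeats or omissions, and concludes $n=|X|$. This tacitly assumes that every repetition-free, exhaustive list of a finite set has length equal to its cardinality. That is the well-definedness of cardinality, which is exactly what your pigeonhole lemma proves (with the transposition moving $g(n)$ to $m+1$).

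\emph{Counting.} The paper fixes enumerations and argues by sequential choice: $n$ options for $f(i_1)$, then $n-1$ for $f(i_2)$, and so on. It then invokes the multiplication principle (Proposition~\ref{prop:mult}, proved just afterwards) and leaves existence implicit. You make existence explicit via $\beta\circ\alpha^{-1}$. You also replace the multiplication principle by an induction on $n$: you sort bijections by $f(i_0)$ and identify each class, via restriction, with the bijections $N\setminus\{i_0\}\to X\setminus\{a\}$. Along the way you check that these sets have size $n-1$, which the paper's phrase ``the remaining $n-1$ elements'' uses without proof.

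\emph{Comparison.} The core idea is the same. Your version is fully self-contained from the definition of $|N|$. The paper's is shorter, and it is phrased so that it transfers verbatim to injections of a $k$-set into an $x$-set, giving $\ffall xk$; that is how it is reused in Propositions~\ref{prop:binomformula} and~\ref{prop:injectionscount}. Your restriction argument adapts to that case just as easily, but it would have to be restated there.
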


\begin{proof}
If $f\colon N\to X$ is a bijection, list $N=\{i_1,\dots,i_n\}$ (no repeats,
since $N$ is a set). Injectivity of $f$ means $f(i_1),\dots,f(i_n)$ are
pairwise distinct elements of $X$; surjectivity means every element of $X$
occurs among them. Hence $f(i_1),\dots,f(i_n)$ is a list of $|X|$ distinct
elements with no repeats and no omissions, so $n=|X|$, i.e.\ $|N|=|X|$.

For the converse, fix bijections of enumeration $N=\{i_1,\dots,i_n\}$,
$X=\{a_1,\dots,a_n\}$ (finite sets can always be listed this way, by
definition of cardinality $n$). Any bijection $N\to X$ is determined by,
and determines, a rearrangement (permutation) of $(a_1,\dots,a_n)$ assigned
to $(i_1,\dots,i_n)$ in order: send $i_1$ to one of the $n$ elements
$a_1,\dots,a_n$ ($n$ choices), then $i_2$ to one of the remaining $n-1$
elements ($n-1$ choices, since $f$ must stay injective), and so on; the last
index $i_n$ has exactly one possible image left. By the multiplication
principle (Proposition~\ref{prop:mult} below) the number of such assignments
is $n(n-1)(n-2)\cdots 1=n!$, and each one is automatically a bijection since
every element of $X$ is used exactly once.
\end{proof}

\begin{proposition}[Multiplication principle]\label{prop:mult}
If a procedure consists of $k$ successive independent choices, and the
$j$-th choice has $c_j$ possible outcomes regardless of the outcomes of the
earlier choices, then the whole procedure has $c_1c_2\cdots c_k$ possible
outcomes.
\end{proposition}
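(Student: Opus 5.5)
The plan is to prove the multiplication principle by induction on the number $k$ of successive choices. First I will make the statement precise. The set of outcomes of the whole procedure is the set of all sequences $(x_1,\dots,x_k)$ in which $x_1$ is an admissible first choice and each $x_j$ is admissible given $x_1,\dots,x_{j-1}$. The hypothesis is that for each admissible prefix $(x_1,\dots,x_{j-1})$, the set $C_j(x_1,\dots,x_{j-1})$ of admissible $j$-th choices has exactly $c_j$ elements. The individual sets may depend on the prefix; only their sizes are assumed constant. This is exactly the situation in the proof of Proposition~\ref{prop:bijcard}, where the set of remaining images shrinks and depends on earlier picks, but its size does not.

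For the base cases: when $k=0$ there is exactly one outcome, the empty sequence, and the empty product is $1$. When $k=1$ the outcomes are just the elements of $C_1$, so there are $c_1$ of them.

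For the inductive step, assume the claim holds for procedures of $k-1$ choices. Let $P$ be the set of admissible prefixes $(x_1,\dots,x_{k-1})$; by induction $|P|=c_1\cdots c_{k-1}$. The set of full outcomes splits into disjoint pieces $F_p=\{(p,x): x\in C_k(p)\}$, one for each $p\in P$. The pieces are disjoint because different prefixes give different sequences. Each piece is in bijection with $C_k(p)$ through the map $x\mapsto (p,x)$, so $|F_p|=c_k$ by Proposition~\ref{prop:bijcard}, or simply by the definition of cardinality through a bijection.

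Finally, I use the elementary additivity of cardinality for disjoint finite sets: the size of a disjoint union is the sum of the sizes. Here this gives $|P|$ summands each equal to $c_k$, for a total of $c_1\cdots c_{k-1}\cdot c_k$. This additivity can be justified by concatenating the enumerations of the pieces into a single list with no repeats and no omissions, in the same style as the first part of the proof of Proposition~\ref{prop:bijcard}.

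The main obstacle is not computational. It is the correct formalization of "independent choices regardless of earlier outcomes": the allowed sets may vary with the prefix while their sizes do not. The decomposition into the fibres $F_p$ is exactly what handles this. One must also take care not to appeal circularly to Proposition~\ref{prop:bijcard}'s count of $n!$, which itself relies on the principle being proved. Only its first, non-circular part, that a bijection preserves cardinality, is used here.
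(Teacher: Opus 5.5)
Your proof is correct and follows the paper's route: induction on $k$, combining the $c_1\cdots c_{k-1}$ admissible prefixes with the $c_k$ admissible last choices. Your split into the disjoint fibres $F_p$, counted with the addition principle, is a somewhat more careful version of the paper's appeal to ``the definition of the Cartesian product,'' because it explicitly allows the set of last choices to depend on the prefix while only its size $c_k$ is fixed.
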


\begin{proof}
By induction on $k$. For $k=1$ this is a tautology. Assume the result for
$k-1$ choices, giving $c_1\cdots c_{k-1}$ outcomes for the first $k-1$
choices. Pairing each of these outcomes with each of the $c_k$ outcomes of
the last choice gives, by definition of the Cartesian product and since the
outcomes of the first $k-1$ choices are pairwise distinct sequences, exactly
$(c_1\cdots c_{k-1})\cdot c_k$ distinct outcomes of the full procedure; no
two of them coincide, because they already differ in at least one of the
first $k-1$ coordinates, or else (if those agree) they differ in the last
one.
\end{proof}

\begin{proposition}[Addition principle]\label{prop:add}
If a finite set $S$ is partitioned into pairwise disjoint subsets
$S_1,\dots,S_r$ (i.e.\ $S=S_1\cup\cdots\cup S_r$ and $S_i\cap S_j=\varnothing$
for $i\neq j$), then $|S|=|S_1|+\cdots+|S_r|$.
\end{proposition}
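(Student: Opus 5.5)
The natural approach is induction on the number $r$ of blocks, reducing everything to the case of two disjoint pieces. To keep this self-contained in the spirit of the chapter, I will take as the definition of $|S|=m$ the existence of a bijection $\{1,\dots,m\}\to S$, i.e.\ a listing of $S$ without repeats or omissions. This is the same convention used in the proof of Proposition~\ref{prop:bijcard}, and by that proposition any two such listings have the same length.

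\textbf{Base cases.} For $r=1$ the statement is a tautology, since $S=S_1$.

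\textbf{Two blocks.} For $r=2$, list $S_1=\{s_1,\dots,s_p\}$ and $S_2=\{t_1,\dots,t_q\}$ without repeats, and concatenate them into $s_1,\dots,s_p,t_1,\dots,t_q$. I will check three things:
\begin{enumerate}
\item Every element of $S$ appears, because $S=S_1\cup S_2$.
\item No $s_i$ equals any $t_j$, because $S_1\cap S_2=\varnothing$.
\item Within each sublist there are no repeats, by choice of the listings.
\end{enumerate}
Hence the concatenation is a listing of $S$ of length $p+q$, and Proposition~\ref{prop:bijcard} converts this into $|S|=p+q$.

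\textbf{Inductive step.} Assume the result for $r-1$ blocks and put $T=S_1\cup\cdots\cup S_{r-1}$. The sets $S_1,\dots,S_{r-1}$ partition $T$, so the inductive hypothesis gives $|T|=|S_1|+\cdots+|S_{r-1}|$. Next I verify that $T\cap S_r=\varnothing$: an element of the intersection would lie in some $S_i\cap S_r$ with $i<r$, and each of these is empty by hypothesis. Also $S=T\cup S_r$. The two-block case then yields $|S|=|T|+|S_r|$, which completes the induction.

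The only delicate point is in the two-block case: I must make sure the concatenated list is genuinely repeat-free and exhaustive. That is exactly where disjointness and the covering condition are each used once. Everything else is bookkeeping.
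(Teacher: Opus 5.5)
Your proof is correct and uses essentially the same idea as the paper: concatenate listings of the blocks, with disjointness ruling out repeats and the covering condition guaranteeing every element of $S$ appears. The only difference is that you reduce to the two-block case by induction on $r$, whereas the paper concatenates all $r$ listings in a single step.
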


\begin{proof}
List the elements of $S_1$, then those of $S_2$, and so on. Since the $S_i$
are pairwise disjoint, no element is listed twice; since their union is $S$,
every element of $S$ is listed. The resulting list has $S$'s elements each
exactly once, of total length $|S_1|+\cdots+|S_r|$, so $|S|=|S_1|+\cdots+|S_r|$.
\end{proof}

\section{Binomial coefficients}

\begin{definition}
For integers $x\ge0$ and $k\ge0$, the \emph{binomial coefficient}
$\binom xk$ is the number of $k$-element subsets of a set of size $x$ (and
$\binom xk=0$ if $k>x$).
\end{definition}

\begin{proposition}\label{prop:binomformula}
For $0\le k\le x$,
\begin{equation}
\binom xk=\frac{x!}{k!(x-k)!}=\frac{x(x-1)\cdots(x-k+1)}{k!}.
\end{equation}
\end{proposition}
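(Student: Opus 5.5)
We will prove the formula by counting the \emph{injective sequences} (ordered selections without repetition) of length $k$ from a set $X$ with $|X|=x$ in two different ways, and comparing the two answers. Throughout, the multiplication principle (Proposition~\ref{prop:mult}) and the count of bijections in Proposition~\ref{prop:bijcard} are the only tools needed.

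First count: an injective sequence $(a_1,\dots,a_k)$ of distinct elements of $X$ is built by choosing $a_1$ among $x$ elements, then $a_2$ among the remaining $x-1$, and so on, $a_k$ among the remaining $x-k+1$. The number of outcomes at step $j$ is $x-j+1$ regardless of earlier choices, so Proposition~\ref{prop:mult} gives exactly $x(x-1)\cdots(x-k+1)$ such sequences. Second count: every injective sequence determines its set of entries $\{a_1,\dots,a_k\}$, a $k$-element subset $A\subseteq X$. Conversely, the sequences with underlying set $A$ are exactly the bijections $\{1,\dots,k\}\to A$, of which there are $k!$ by Proposition~\ref{prop:bijcard}. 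Grouping the sequences according to their underlying set partitions them into $\binom xk$ disjoint classes, each of size $k!$, so by the addition principle (Proposition~\ref{prop:add}) their total number is $\binom xk\cdot k!$.

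Equating the two counts yields $\binom xk\,k!=x(x-1)\cdots(x-k+1)$, and dividing by $k!\neq0$ gives the second expression in the statement. For the first expression, multiply numerator and denominator by $(x-k)!=(x-k)(x-k-1)\cdots1$, noting that $x(x-1)\cdots(x-k+1)\cdot(x-k)!=x!$; this is where the hypothesis $k\le x$ is used, so that $(x-k)!$ is defined. The edge case $k=0$ will be checked directly: the empty product equals $1$, $0!=1$, and there is exactly one $0$-element subset, the empty set.

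The only step requiring care is the second count: one must verify that the map ``sequence $\mapsto$ set of its entries'' really has fibres of size exactly $k!$. This amounts to checking that an injective sequence with entry set $A$ is the same thing as a bijection from $\{1,\dots,k\}$ onto $A$. Injectivity of such a map is the distinctness of the entries, and surjectivity holds by the definition of $A$ as the set of entries. Once this is in place, the rest of the argument is arithmetic.
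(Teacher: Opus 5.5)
Your proof is correct and is essentially the paper's argument: both count the injective length-$k$ sequences from $X$ in two ways, obtaining $x(x-1)\cdots(x-k+1)$ directly and $\binom xk\cdot k!$ by grouping sequences according to their underlying $k$-subset, with Proposition~\ref{prop:bijcard} supplying the $k!$ orderings of each subset. The differences are only cosmetic. You justify the grouping step with the addition principle, where the paper uses the multiplication principle for ``choose the subset, then an ordering''; and you spell out the passage to $x!/(k!(x-k)!)$ and the case $k=0$, which the paper leaves implicit in the word ``rearranges.''
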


\begin{proof}
Let $X$ be a set of size $x$. By the argument in the proof of
Proposition~\ref{prop:bijcard}, the number of \emph{injective} sequences
$(a_1,\dots,a_k)$ of distinct elements of $X$ is $x(x-1)\cdots(x-k+1)$ (there
are $x$ choices for $a_1$, then $x-1$ remaining choices for $a_2$ avoiding
$a_1$, and so on). Every $k$-element subset $A\subseteq X$ arises from
exactly $k!$ such sequences, namely the $k!$ orderings of its elements (by
Proposition~\ref{prop:bijcard} applied to bijections from $\{1,\dots,k\}$ to
$A$). Hence, by the multiplication principle applied to the two-stage
procedure ``choose the subset $A$, then choose an ordering of $A$'', we get
$x(x-1)\cdots(x-k+1)=\binom xk\cdot k!$, which rearranges to the stated
formula.
\end{proof}

\begin{proposition}[Pascal's rule]\label{prop:pascal}
For $x\ge1$ and $k\ge1$, $\binom xk=\binom{x-1}{k-1}+\binom{x-1}{k}$.
\end{proposition}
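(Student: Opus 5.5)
The plan is a bijective argument that uses only the definition of $\binom xk$ as a number of subsets, together with the addition principle (Proposition~\ref{prop:add}). Fix a set $X$ of size $x\ge1$ and single out one element $a\in X$; this is possible because $x\ge1$. Let $S$ be the set of all $k$-element subsets of $X$, so that $|S|=\binom xk$ by definition.

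Split $S$ into two disjoint parts. Let $S_1$ be the $k$-subsets that contain $a$, and let $S_2$ be those that do not. Every $k$-subset falls into exactly one of these, so Proposition~\ref{prop:add} gives $|S|=|S_1|+|S_2|$.

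It remains to count each part; this is the only step that needs care.

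For $S_1$, use the map $A\mapsto A\setminus\{a\}$. It sends a $k$-subset containing $a$ to a $(k-1)$-subset of $X\setminus\{a\}$, a set of size $x-1$. Here $k\ge1$ guarantees that $k-1\ge0$ makes sense. The inverse map is $B\mapsto B\cup\{a\}$, and since $a\notin B$ the union has exactly $k$ elements. Checking that the two maps are mutually inverse shows this is a bijection. Proposition~\ref{prop:bijcard} then gives $|S_1|=\binom{x-1}{k-1}$.

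For $S_2$, the elements are exactly the $k$-subsets of $X\setminus\{a\}$. Hence $|S_2|=\binom{x-1}{k}$ directly from the definition.

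The convention $\binom{x-1}{k}=0$ when $k>x-1$ is consistent with this. If $k>x-1$, there are no $k$-subsets of $X$ avoiding $a$, so $S_2$ is empty and the count is correct. Similarly, when $k>x$ both sides vanish, so no separate case analysis is needed.

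Combining the two counts gives $\binom xk=\binom{x-1}{k-1}+\binom{x-1}{k}$.

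As a cross-check, one could instead verify the identity algebraically from Proposition~\ref{prop:binomformula} by putting both terms over the common denominator $k!(x-k)!$. That route needs separate handling of the boundary cases $k=x$ and $k>x$, so the bijective proof is the one I would present.
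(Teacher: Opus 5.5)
Your proof is correct and takes essentially the same approach as the paper. Both fix an element $a_0\in X$, split the $k$-subsets by whether they contain $a_0$, biject those containing it with $(k-1)$-subsets of $X\setminus\{a_0\}$ by removing $a_0$, identify the rest with $k$-subsets of $X\setminus\{a_0\}$, and then conclude by the addition principle.
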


\begin{proof}
Let $X$ be a set of size $x$ and fix one element $a_0\in X$. Every
$k$-subset $A\subseteq X$ either contains $a_0$ or does not. If it does,
$A\setminus\{a_0\}$ is a $(k-1)$-subset of $X\setminus\{a_0\}$ (size $x-1$),
and this correspondence $A\mapsto A\setminus\{a_0\}$ is a bijection between
$k$-subsets containing $a_0$ and $(k-1)$-subsets of $X\setminus\{a_0\}$: its
inverse adjoins $a_0$ back. If $A$ does not contain $a_0$, then $A$ is
already a $k$-subset of $X\setminus\{a_0\}$, and this is a bijection with
itself (the identity). These two cases partition the $k$-subsets of $X$ into
disjoint classes of sizes $\binom{x-1}{k-1}$ and $\binom{x-1}{k}$
respectively, so the addition principle gives the claim.
\end{proof}

\begin{proposition}[Binomial theorem]\label{prop:binomthm}
For every integer $x\ge0$ and every pair of commuting indeterminates
(or real numbers) $u,v$,
$(u+v)^{x}=\sum_{k=0}^{x}\binom xk u^{k}v^{x-k}.$
\end{proposition}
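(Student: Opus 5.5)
I would prove the binomial theorem combinatorially, by expanding the product and counting, so that it rests directly on the definition of $\binom xk$ as a number of $k$-subsets. Write $(u+v)^x$ as the product of $x$ factors $(u+v)(u+v)\cdots(u+v)$, indexed by the set $X=\{1,\dots,x\}$. By repeated use of the distributive law (formally, by induction on $x$), this product equals the sum over all ways of choosing, from each factor $j\in X$, one of the two terms $u$ or $v$, of the product of the chosen terms. By the multiplication principle (Proposition~\ref{prop:mult}) there are $2^x$ such choices. Each choice corresponds bijectively to the subset $A\subseteq X$ of factors from which $u$ was taken. Since $u$ and $v$ commute, the resulting monomial is $u^{|A|}v^{x-|A|}$.

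Next I would group the $2^x$ terms according to $k=|A|$. The subsets $A$ of each fixed size $k$, for $k=0,\dots,x$, partition the power set of $X$. By the addition principle (Proposition~\ref{prop:add}), the coefficient of $u^kv^{x-k}$ is therefore the number of $k$-element subsets of $X$, which by definition is $\binom xk$. This gives the stated formula.

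The only delicate step is making the statement ``expanding the product gives one term per choice'' rigorous, since the book promises no hand-waving. To avoid it, I would instead give a clean induction on $x$ using Pascal's rule (Proposition~\ref{prop:pascal}).
\begin{itemize}
\item The base case $x=0$ reads $1=\binom00$.
\item For the step, write $(u+v)^{x}=(u+v)(u+v)^{x-1}$ and substitute the inductive hypothesis. Distributing gives $\sum_k\binom{x-1}{k}u^{k+1}v^{x-1-k}+\sum_k\binom{x-1}{k}u^kv^{x-k}$.
\item Shift the index in the first sum, so that the coefficient of $u^kv^{x-k}$ becomes $\binom{x-1}{k-1}+\binom{x-1}{k}$.
\item For $1\le k\le x$, Pascal's rule turns this coefficient into $\binom xk$.
\item The boundary terms $k=0$ and $k=x$ need separate checking, using $\binom{x-1}{-1}=0$ by convention and $\binom{x-1}{x}=0$ by definition.
\end{itemize}

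I expect the bookkeeping of these boundary indices to be the main (minor) obstacle in the inductive route. I would present the inductive proof as the official one and mention the combinatorial count as the intuition behind it.
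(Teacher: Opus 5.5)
Your proposal is correct, but the proof you designate as official takes a different route from the paper's. The paper's proof is exactly the combinatorial sketch in your first two paragraphs. It expands the $x$-fold product into $2^x$ terms, one for each choice of $u$ or $v$ from each factor. It identifies the terms equal to $u^kv^{x-k}$ with the $k$-subsets of the set of factor positions, and then groups by $k$ using the addition principle. It simply takes the generalized distributive expansion for granted.

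Your induction via Pascal's rule (Proposition~\ref{prop:pascal}) is the rigorous unrolling of that step. Pascal's rule is proved earlier in the paper without the binomial theorem, so there is no circularity. Each stage of your induction uses distributivity over a single factor plus the commutation $vu^k=u^kv$. The combinatorial content moves into Pascal's rule, whose proof splits $k$-subsets by whether they contain a fixed $a_0$; that is the one-factor version of the paper's choice-counting.

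The paper's route buys directness: the coefficient is visibly the defining count of $\binom xk$, with no index shifts or boundary cases. Yours buys that no step is left informal.

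Your boundary handling is fine. Since the paper defines $\binom xk$ only for $k\ge0$, the cleanest way to treat $k=0$ is to check it directly, $\binom{x-1}{0}=1=\binom x0$, rather than through a convention for $\binom{x-1}{-1}$. For $k=x$, the missing term is legitimately $\binom{x-1}{x}=0$ by the paper's definition, as you say.
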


\begin{proof}
Expanding the product $(u+v)(u+v)\cdots(u+v)$ ($x$ factors) distributively
produces a sum of $2^{x}$ terms, one for each way of choosing, from each of
the $x$ factors, either $u$ or $v$. A term contributing $u^{k}v^{x-k}$
corresponds exactly to a choice of which $k$ of the $x$ factors contributed
a $u$, i.e.\ to a $k$-element subset of a set of size $x$ (the set of factor
positions); there are $\binom xk$ of these by definition. Grouping the
$2^x$ terms by the value of $k$ and using the addition principle gives the
stated sum.
\end{proof}

\section{Multisets and the stars-and-bars formula}

\begin{definition}
A \emph{multiset} of size $n$ drawn from a set of $c$ ``colors''
$\{1,\dots,c\}$ is an assignment of a multiplicity (nonnegative integer)
$\alpha_1,\dots,\alpha_c\ge0$ to each color, with $\alpha_1+\cdots+\alpha_c=n$
(the total number of objects). Equivalently, it is an unordered selection of
$n$ objects from $c$ colors, repetitions of colors allowed.
\end{definition}

\begin{proposition}[Stars and bars]\label{prop:starsbars}
The number of multisets of size $n$ from $c$ colors --- equivalently, the
number of solutions in nonnegative integers of
$\alpha_1+\alpha_2+\cdots+\alpha_c=n$ --- is $\binom{c+n-1}{n}$.
\end{proposition}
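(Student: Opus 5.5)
The plan is to build an explicit bijection between solutions $(\alpha_1,\dots,\alpha_c)$ of $\alpha_1+\cdots+\alpha_c=n$ in nonnegative integers and the $n$-element subsets of a set of size $c+n-1$. Once that bijection is in place, the count is $\binom{c+n-1}{n}$ directly from the definition of the binomial coefficient (no closed formula is needed). The equivalence between multisets and solutions is just the definition of a multiset, so only the bijection requires work. We assume $c\ge1$; for $c=0$ the statement should be read with the convention that there is one solution when $n=0$ and none otherwise, and we will remark on this separately.

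\textbf{Encoding.} Consider the positions $\{1,2,\dots,c+n-1\}$. A solution $(\alpha_1,\dots,\alpha_c)$ is encoded as a word of $n$ stars and $c-1$ bars: write $\alpha_1$ stars, then a bar, then $\alpha_2$ stars, then a bar, and so on, ending with $\alpha_c$ stars and no trailing bar. The word has length $\alpha_1+\cdots+\alpha_c+(c-1)=n+c-1$. Record the set $S\subseteq\{1,\dots,c+n-1\}$ of positions occupied by stars; then $|S|=n$. Explicitly, the stars of the $j$-th block occupy the positions $\alpha_1+\cdots+\alpha_{j-1}+(j-1)+t$ for $1\le t\le\alpha_j$.

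\textbf{Inverse.} Given an $n$-subset $S$, the complement $\{1,\dots,c+n-1\}\setminus S$ has exactly $c-1$ elements. List them as $b_1<\cdots<b_{c-1}$ and set $b_0=0$, $b_c=c+n$. Define
\[
\alpha_j=b_j-b_{j-1}-1\qquad(1\le j\le c).
\]
Each $\alpha_j\ge0$ because the $b_j$ are strictly increasing. The sum telescopes:
\[
\sum_j\alpha_j=b_c-b_0-c=n.
\]
It then remains to check that the two maps are mutually inverse. In the encoding the bars sit exactly at positions $b_j=\alpha_1+\cdots+\alpha_j+j$, so the inverse recovers $\alpha_j$. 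Conversely, the gaps between consecutive bars are precisely the blocks of stars, so encoding the recovered $\alpha$ returns the original $S$.

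\textbf{Main obstacle.} The only real difficulty is the bookkeeping needed to show rigorously that the two constructions invert each other, in the no-``clearly'' style the book demands. The cleanest route is to use the closed formula $b_j=\sum_{i\le j}\alpha_i+j$ for the bar positions, so that both directions reduce to an identity between partial sums and differences.
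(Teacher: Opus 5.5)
Your proof is correct and uses the same stars-and-bars bijection as the paper: you encode a solution as a word of $n$ stars and $c-1$ bars, then record which $n$ of the $n+c-1$ positions hold stars. Your explicit bar positions $b_j=\alpha_1+\cdots+\alpha_j+j$ and the inverse $\alpha_j=b_j-b_{j-1}-1$ make the mutual-inverse check more rigorous than the paper's verbal version, and your remark on the degenerate case $c=0$ covers an edge case the paper leaves implicit.
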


\begin{proof}
Represent a solution $(\alpha_1,\dots,\alpha_c)$ by a row of $n$ stars
($\ast$) split into $c$ blocks by $c-1$ bars ($\mid$), the $j$-th block
containing $\alpha_j$ stars:
\[
\underbrace{\ast\cdots\ast}_{\alpha_1}\mid\underbrace{\ast\cdots\ast}_{\alpha_2}
\mid\cdots\mid\underbrace{\ast\cdots\ast}_{\alpha_c}.
\]
This is a row of $n+(c-1)$ symbols, of which $n$ are stars and $c-1$ are
bars; the row is completely determined by choosing which $n$ of the $n+c-1$
positions hold stars (the rest hold bars), and every such choice gives a
valid solution (the $\alpha_j$'s are read off as the gaps between
consecutive bars, with $\alpha_j\ge0$ always, including $\alpha_j=0$ when two
bars are adjacent or a bar is at an end). This is a bijection between
solutions and $n$-element subsets of an $(n+c-1)$-element set (the set of
positions), so by definition of the binomial coefficient there are
$\binom{n+c-1}{n}$ of them.
\end{proof}

\begin{corollary}\label{cor:multiset-ogf}
The number of multisets of size $n$ from $c$ colors is also the number of
nonnegative-integer solutions of $\alpha_1+\cdots+\alpha_c=n$, and, as a
generating-function identity in a formal variable $z$,
\begin{equation}\label{eq:geomC}
\sum_{n\ge0}\binom{c+n-1}{n}z^{n}=\frac{1}{(1-z)^{c}}.
\end{equation}
\end{corollary}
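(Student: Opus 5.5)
The first assertion, that the number of multisets of size $n$ from $c$ colors equals the number of nonnegative-integer solutions of $\alpha_1+\cdots+\alpha_c=n$, is simply the definition of a multiset together with Proposition~\ref{prop:starsbars}. So the only real content is the generating-function identity \eqref{eq:geomC}. I will prove it by expanding the right-hand side as a product of geometric series and then reading off coefficients combinatorially.

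The plan has three steps. First, in the ring of formal power series, $1-z$ has the inverse $\sum_{a\ge0}z^{a}$. This is checked directly: $(1-z)\sum_{a\ge0}z^a$ telescopes to $1$, because each coefficient of $z^{m}$ with $m\ge1$ is $1-1=0$. Consequently
\[
\frac{1}{(1-z)^{c}}=\prod_{j=1}^{c}\Bigl(\sum_{\alpha_j\ge0}z^{\alpha_j}\Bigr).
\]
Second, I expand this product distributively, exactly as in the proof of Proposition~\ref{prop:binomthm}. A term of the expansion is a choice of one exponent $\alpha_j\ge0$ from each of the $c$ factors, and it contributes $z^{\alpha_1+\cdots+\alpha_c}$. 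Grouping the terms by total degree, the coefficient of $z^{n}$ is the number of $c$-tuples $(\alpha_1,\dots,\alpha_c)$ of nonnegative integers with sum $n$. Third, Proposition~\ref{prop:starsbars} says this number is $\binom{c+n-1}{n}$, which gives \eqref{eq:geomC}.

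The main obstacle is justifying the infinite distributive expansion in the second step, since each factor is an infinite series. I will handle it by noting that the formal product of power series is defined coefficientwise. The coefficient of $z^n$ in a product of $c$ series is the finite sum, over all tuples with $\alpha_1+\cdots+\alpha_c=n$, of the products of the corresponding coefficients. Here every coefficient equals $1$, so the coefficient of $z^n$ is exactly the count of such tuples. This can be set up by induction on $c$ using the two-factor Cauchy product, and only finitely many terms are involved for each fixed $n$.

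The edge case $c=0$ also needs a check: the right side is $1$, and $\binom{n-1}{n}$ is $1$ for $n=0$ and $0$ for $n\ge1$, with the convention that $\binom{-1}{0}=1$. If this convention seems awkward, I will simply assume $c\ge1$ throughout.
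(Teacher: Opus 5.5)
Your proposal is correct and follows the paper's own argument. Both proofs invert $1-z$ by the telescoping geometric series, write $(1-z)^{-c}$ as the $c$-fold product of $\sum_{a\ge0}z^{a}$, read the coefficient of $z^{n}$ as the number of nonnegative solutions of $\alpha_1+\cdots+\alpha_c=n$, and finish with stars and bars. Your extra justification of the coefficientwise expansion (induction on $c$ via the two-factor Cauchy product) and your handling of the $c=0$ convention are sensible refinements of the paper's one-line ``expanding the $c$-fold product.''
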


\begin{proof}
The combinatorial statement is Proposition~\ref{prop:starsbars} restated. For
the generating-function identity: by the geometric series
$\frac{1}{1-z}=\sum_{n\ge0}z^{n}$ (formal power series identity, checked by
multiplying both sides by $1-z$ and observing telescoping cancellation), we
have $\frac{1}{(1-z)^{c}}=\left(\sum_{n\ge0}z^{n}\right)^{c}$. Expanding the
$c$-fold product, the coefficient of $z^{n}$ is the number of ways of writing
$n=\alpha_1+\cdots+\alpha_c$ with each $\alpha_j\ge0$ ranging over the
exponent contributed by the $j$-th factor, which is exactly the count of
Proposition~\ref{prop:starsbars}.
\end{proof}

\section{Inclusion--exclusion}

\begin{proposition}[Inclusion--exclusion principle]\label{prop:incexc}
Let $X$ be a finite set and $A_1,\dots,A_r\subseteq X$. Then
\begin{equation}
\Bigl|\,\bigcup_{i=1}^{r}A_i\,\Bigr|
=\sum_{\varnothing\neq I\subseteq\{1,\dots,r\}}(-1)^{|I|+1}
\Bigl|\bigcap_{i\in I}A_i\Bigr|.
\end{equation}
Equivalently, the number of elements of $X$ lying in \emph{none} of the
$A_i$ is
\begin{equation}\label{eq:incexc-complement}
\Bigl|X\setminus\bigcup_{i=1}^r A_i\Bigr|
=\sum_{I\subseteq\{1,\dots,r\}}(-1)^{|I|}\Bigl|\bigcap_{i\in I}A_i\Bigr|,
\end{equation}
where the term $I=\varnothing$ contributes $|X|$.
\end{proposition}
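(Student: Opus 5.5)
The plan is to prove the inclusion--exclusion principle by a double-counting argument: each element $x\in X$ is counted with a certain total weight on the right-hand side, and we check that this weight is exactly what the left-hand side requires. I will prove the complement form \eqref{eq:incexc-complement} first and then derive the union form from it. The only earlier tools needed are the addition principle (Proposition~\ref{prop:add}) and the binomial theorem (Proposition~\ref{prop:binomthm}).

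First I rewrite each term as a sum of indicators. For a fixed $x\in X$, let $J(x)=\{i: x\in A_i\}$ and $m=|J(x)|$. For any $I\subseteq\{1,\dots,r\}$, the element $x$ lies in $\bigcap_{i\in I}A_i$ exactly when $I\subseteq J(x)$; the convention $I=\varnothing$ gives the whole of $X$. Hence, exchanging the order of summation (a finite rearrangement justified by the addition principle),
\[
\sum_{I}(-1)^{|I|}\Bigl|\bigcap_{i\in I}A_i\Bigr|=\sum_{x\in X}\ \sum_{I\subseteq J(x)}(-1)^{|I|}.
\]

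Next I evaluate the inner sum. There are $\binom mk$ subsets $I\subseteq J(x)$ with $|I|=k$, so the inner sum equals $\sum_{k=0}^m\binom mk(-1)^k$. Taking $u=-1$ and $v=1$ in Proposition~\ref{prop:binomthm}, this is $(1-1)^m$. That value is $1$ when $m=0$ and $0$ when $m\ge1$; here $0^0=1$ holds because the empty product is $1$, since only $I=\varnothing$ contributes. So each $x$ contributes $1$ precisely when it lies in none of the $A_i$, and $0$ otherwise. Summing over $x$ gives $\bigl|X\setminus\bigcup A_i\bigr|$, which proves \eqref{eq:incexc-complement}.

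Finally, the first identity follows from two observations. The addition principle gives $\bigl|\bigcup A_i\bigr|=|X|-\bigl|X\setminus\bigcup A_i\bigr|$. Separating the $I=\varnothing$ term from \eqref{eq:incexc-complement} and negating the remaining terms flips $(-1)^{|I|}$ to $(-1)^{|I|+1}$, which is the first identity.

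The main obstacle is not conceptual; it is making the exchange of summation order rigorous at the level of detail the book requires. I will state explicitly that both sides count pairs $(x,I)$ with $I\subseteq J(x)$, weighted by the sign $(-1)^{|I|}$. I will also handle the degenerate case $m=0$ separately rather than relying on the convention $0^0=1$.
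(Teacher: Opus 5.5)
Your proposal is correct and takes essentially the same approach as the paper. Both prove the complement form by showing that each element with $m$ memberships contributes $\sum_{I\subseteq J}(-1)^{|I|}=(1-1)^{m}$ (the binomial theorem with $u=-1$, $v=1$), which is $1$ exactly when $m=0$; both then obtain the union form by subtracting from $|X|$ and separating off the $I=\varnothing$ term.
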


\begin{proof}
We prove \eqref{eq:incexc-complement}, which is equivalent to the displayed
formula by subtracting both sides from $|X|$ (using $|X|=|X\setminus
\bigcup A_i|+|\bigcup A_i|$, a case of the addition principle, and matching
the $I=\varnothing$ term of \eqref{eq:incexc-complement} to $|X|$).

Fix $a\in X$ and let $J=\{i:a\in A_i\}\subseteq\{1,\dots,r\}$, so $|J|=m$ for
some $0\le m\le r$. We show $a$ contributes the same amount, namely
$[m=0]$ (the indicator, $1$ if $m=0$ and $0$ otherwise), to both sides.

On the left of \eqref{eq:incexc-complement}, $a$ is counted iff $m=0$
(it lies in no $A_i$), contributing exactly $[m=0]$.

On the right, $a\in\bigcap_{i\in I}A_i$ iff $I\subseteq J$. So $a$'s total
contribution to the right side is $\sum_{I\subseteq J}(-1)^{|I|}
=\sum_{k=0}^{m}\binom mk(-1)^{k}=(1-1)^{m}=0^{m}$ by the binomial theorem
(Proposition~\ref{prop:binomthm} with $u=-1,v=1$), which equals $1$ if $m=0$
and $0$ if $m\ge1$. This is exactly $[m=0]$.

Since every $a\in X$ contributes equally to both sides, and both sides are
sums of these per-element contributions (each $a$ contributes to the size
count on the left only if $a\notin\bigcup A_i$, and it contributes the
same indicator to the sum of sizes on the right, one unit for each
$a\in\bigcap_{i\in I}A_i$ summed with sign $(-1)^{|I|}$ over all $I$), the two
sides are equal.
\end{proof}

\chapter{Group Actions and the Cauchy--Frobenius--Burnside Lemma}
\label{ch:groups}

\section{Groups acting on sets}

\begin{definition}
A \emph{group} is a set $G$ with a binary operation (written $gh$ for
$g,h\in G$) that is associative, has an identity element $e$ (with
$eg=ge=g$ for all $g$), and in which every $g\in G$ has an inverse $g^{-1}$
with $gg^{-1}=g^{-1}g=e$.
\end{definition}

\begin{definition}
An \emph{action} of a group $G$ on a set $S$ is a rule assigning to each
$g\in G$ and $s\in S$ an element $g\cdot s\in S$, such that
$e\cdot s=s$ for all $s$, and $g\cdot(h\cdot s)=(gh)\cdot s$ for all
$g,h\in G$, $s\in S$.
\end{definition}

\begin{definition}
Given an action of $G$ on $S$ and $s\in S$, the \emph{orbit} of $s$ is
$\Orb(s)=\{g\cdot s:g\in G\}\subseteq S$, and the \emph{stabilizer} of $s$ is
$\Stab(s)=\{g\in G:g\cdot s=s\}\subseteq G$.
\end{definition}

\begin{proposition}\label{prop:orbitspartition}
The orbits of a $G$-action on $S$ partition $S$: every $s\in S$ lies in
exactly one orbit.
\end{proposition}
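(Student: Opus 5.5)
The plan is to show that the relation $s\sim t$, meaning ``there exists $g\in G$ with $g\cdot s=t$'', is an equivalence relation on $S$, and that its equivalence classes are exactly the orbits. The partition statement then follows from the general fact, which I will prove directly rather than cite, that equivalence classes cover $S$ and are pairwise either equal or disjoint.

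First I check the three axioms, using only the two action axioms and the group axioms. Reflexivity holds because $e\cdot s=s$. For symmetry, suppose $g\cdot s=t$. Then
\[
g^{-1}\cdot t=g^{-1}\cdot(g\cdot s)=(g^{-1}g)\cdot s=e\cdot s=s.
\]
For transitivity, suppose $g\cdot s=t$ and $h\cdot t=u$. Then $(hg)\cdot s=h\cdot(g\cdot s)=u$. By definition, $\Orb(s)=\{t:s\sim t\}$, so orbits are the equivalence classes.

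Next I verify the partition directly. Covering is immediate, since $s=e\cdot s\in\Orb(s)$. For uniqueness, suppose $u\in\Orb(s)\cap\Orb(t)$. I show $\Orb(s)=\Orb(t)$ as follows.
\begin{enumerate}
\item Write $u=g\cdot s=h\cdot t$.
\item Then $t=h^{-1}g\cdot s$, using the computation from the symmetry step.
\item Any element $k\cdot t$ of $\Orb(t)$ therefore equals $(kh^{-1}g)\cdot s$, which lies in $\Orb(s)$.
\item The reverse inclusion follows by exchanging the roles of $s$ and $t$.
\end{enumerate}
Hence two orbits that meet are equal. So if $s$ lies in $\Orb(t)$, meaning both $\Orb(s)$ and $\Orb(t)$ contain $s$, then $\Orb(t)=\Orb(s)$. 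This shows that $s$ lies in exactly one orbit.

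There is no real obstacle here. The only point needing care is the symmetry step, where one must invoke both action axioms, compatibility and identity, to move $g^{-1}$ across. I will write out that manipulation explicitly, in keeping with the book's no-``clearly'' convention.
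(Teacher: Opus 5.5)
Your proof is correct and is essentially the paper's argument. The paper proves covering via $e\cdot s=s$, and then shows that meeting orbits coincide by writing $t=(h^{-1}g)\cdot s$, so that $k\cdot t=(kh^{-1}g)\cdot s\in\Orb(s)$, together with the symmetric inclusion. Your preliminary check that $\sim$ is an equivalence relation is harmless but redundant, since your direct verification already establishes the partition on its own.
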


\begin{proof}
Every $s$ lies in $\Orb(s)$ (take $g=e$), so the orbits cover $S$. Suppose
$\Orb(s)\cap\Orb(t)\neq\varnothing$, say $g\cdot s=h\cdot t$ for some
$g,h\in G$. Then $t=h^{-1}\cdot(g\cdot s)=(h^{-1}g)\cdot s\in\Orb(s)$, so for
any $k\cdot t\in\Orb(t)$ we get $k\cdot t=k\cdot((h^{-1}g)\cdot s)=
(kh^{-1}g)\cdot s\in\Orb(s)$, i.e.\ $\Orb(t)\subseteq\Orb(s)$; by the
symmetric argument $\Orb(s)\subseteq\Orb(t)$, so $\Orb(s)=\Orb(t)$. Hence
two orbits are either disjoint or identical, which together with the
covering property means they partition $S$.
\end{proof}

\begin{proposition}[Stabilizer is a subgroup]\label{prop:stabsubgroup}
$\Stab(s)$ is a subgroup of $G$: it contains $e$, and is closed under the
group operation and under inverses.
\end{proposition}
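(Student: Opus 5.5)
The plan is to verify the three subgroup conditions one at a time, using only the two action axioms ($e\cdot s=s$ and $g\cdot(h\cdot s)=(gh)\cdot s$) and the group axioms. Associativity needs no separate check, since it is inherited from $G$.

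First, the identity: $e\in\Stab(s)$ because the first action axiom gives $e\cdot s=s$ directly.

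Second, closure under the operation: take $g,h\in\Stab(s)$, so $g\cdot s=s$ and $h\cdot s=s$. By the compatibility axiom, $(gh)\cdot s=g\cdot(h\cdot s)=g\cdot s=s$, hence $gh\in\Stab(s)$.

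Third, inverses, which is the only step that needs a small trick. For $g\in\Stab(s)$ we start from $s=g\cdot s$ and act on both sides by $g^{-1}$. This gives
\[
g^{-1}\cdot s=g^{-1}\cdot(g\cdot s)=(g^{-1}g)\cdot s=e\cdot s=s,
\]
using compatibility, then $g^{-1}g=e$, then the identity axiom. So $g^{-1}\in\Stab(s)$.

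None of these steps is a real obstacle. The only point requiring care is the third one: one must apply $g^{-1}$ to the equation $g\cdot s=s$, rather than try to evaluate $g^{-1}\cdot s$ directly, and then invoke the compatibility axiom in the direction $g^{-1}\cdot(g\cdot s)=(g^{-1}g)\cdot s$.
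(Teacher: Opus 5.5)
Your proof is correct and follows the paper's argument step for step: the identity via $e\cdot s=s$, closure via the compatibility axiom, and inverses by acting with $g^{-1}$ on both sides of $g\cdot s=s$. The only difference is that you write out the chain $g^{-1}\cdot(g\cdot s)=(g^{-1}g)\cdot s=e\cdot s$ in the inverse step, where the paper leaves it implicit.
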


\begin{proof}
$e\cdot s=s$ so $e\in\Stab(s)$. If $g,h\in\Stab(s)$ then
$(gh)\cdot s=g\cdot(h\cdot s)=g\cdot s=s$, so $gh\in\Stab(s)$. If
$g\in\Stab(s)$ then applying $g^{-1}\cdot(-)$ to both sides of $g\cdot s=s$
gives $s=g^{-1}\cdot s$, so $g^{-1}\in\Stab(s)$.
\end{proof}

\begin{theorem}[Orbit--Stabilizer theorem]\label{thm:orbitstab}
If $G$ is a finite group acting on a set $S$ and $s\in S$, then
$|G|=|\Orb(s)|\cdot|\Stab(s)|$.
\end{theorem}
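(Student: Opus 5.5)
The plan is to build an explicit bijection between the orbit $\Orb(s)$ and the set of left cosets $g\Stab(s)$, and then show that these cosets partition $G$ into blocks of equal size $|\Stab(s)|$. Write $H=\Stab(s)$; by Proposition~\ref{prop:stabsubgroup}, $H$ is a subgroup of $G$.

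\textbf{Step 1: the cosets partition $G$.} For $g\in G$ let $gH=\{gh:h\in H\}$. Since $e\in H$, we have $g\in gH$, so the cosets cover $G$. If $gH\cap g'H\neq\varnothing$, say $gh=g'h'$, then $g'^{-1}g=h'h^{-1}\in H$. Closure of $H$ then gives $gH\subseteq g'H$, and the reverse inclusion holds by symmetry. So two cosets are either disjoint or equal, mirroring the proof of Proposition~\ref{prop:orbitspartition}.

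\textbf{Step 2: every coset has exactly $|H|$ elements.} The map $h\mapsto gh$ from $H$ to $gH$ is surjective by definition. It is injective because $gh=gh'$ implies $h=h'$ after left multiplication by $g^{-1}$. Proposition~\ref{prop:bijcard} then gives $|gH|=|H|$.

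\textbf{Step 3: the orbit matches the set of cosets.} Define $\Phi(gH)=g\cdot s$.
\begin{enumerate}
\item The map is well defined and injective, since for any $g,g'$ we have $g\cdot s=g'\cdot s$ if and only if $(g'^{-1}g)\cdot s=s$, that is, $g'^{-1}g\in H$. This holds if and only if $gH=g'H$, by the argument of Step~1.
\item The map is surjective onto $\Orb(s)$ by the definition of the orbit.
\end{enumerate}
Hence the number of distinct cosets is $|\Orb(s)|$, again by Proposition~\ref{prop:bijcard}.

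\textbf{Conclusion.} The addition principle (Proposition~\ref{prop:add}) applied to the partition of $G$ from Step~1, with each block of size $|H|$ by Step~2, gives $|G|=|\Orb(s)|\cdot|H|$.

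The main obstacle is Step~3. One must check that $\Phi$ does not depend on the chosen representative $g$, and also establish the equivalence $g\cdot s=g'\cdot s\iff gH=g'H$ carefully in both directions, using only the action axioms. Since $G$ is finite, $\Orb(s)$ is finite as well, so all the counts above are legitimate.
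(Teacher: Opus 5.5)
Your proof is correct and follows essentially the same route as the paper: both identify the left cosets of $\Stab(s)$ with the points of $\Orb(s)$ via $g\mapsto g\cdot s$, show that each coset has exactly $|\Stab(s)|$ elements, and conclude with the addition principle. The only difference is cosmetic: the paper uses the map $G\to\Orb(s)$ and recognizes its fibers as cosets, while you define the induced map on cosets directly and check that it is well defined.
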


\begin{proof}
Consider the map $\Phi\colon G\to\Orb(s)$, $\Phi(g)=g\cdot s$. It is
surjective by definition of $\Orb(s)$. We claim $\Phi(g)=\Phi(h)$ if and only
if $g$ and $h$ lie in the same left coset of $H:=\Stab(s)$, i.e.\
$g^{-1}h\in H$. Indeed $g\cdot s=h\cdot s$ iff $s=g^{-1}\cdot(h\cdot s)
=(g^{-1}h)\cdot s$ iff $g^{-1}h\in\Stab(s)=H$, i.e.\ iff $h\in gH$. Thus the
fibers of $\Phi$ are exactly the left cosets $gH$ of $H$ in $G$. Each coset
$gH=\{gk:k\in H\}$ has exactly $|H|$ elements, because $k\mapsto gk$ is a
bijection $H\to gH$ (it has inverse $y\mapsto g^{-1}y$, itself well defined
and mutually inverse by associativity and the group axioms). The cosets
partition $G$ (they are the equivalence classes of the relation
``$g^{-1}h\in H$'', which is an equivalence relation by
Proposition~\ref{prop:stabsubgroup}), so by the addition principle
$|G|=(\text{number of cosets})\times|H|$. Since $\Phi$ is a surjection onto
$\Orb(s)$ whose fibers are exactly the cosets, the number of cosets equals
$|\Orb(s)|$ (distinct cosets biject with distinct elements of $\Orb(s)$, by
definition of ``fiber''). Hence $|G|=|\Orb(s)|\cdot|H|=|\Orb(s)|\cdot
|\Stab(s)|$.
\end{proof}

\section{The symmetric group and cycle type}

\begin{definition}
The \emph{symmetric group} $S_M$ on a finite set $M$ is the set of all
bijections $M\to M$ (\emph{permutations} of $M$), with composition as the
group operation. When $|M|=m$ we also write $S_m$.
\end{definition}

\begin{definition}
Let $\pi\in S_M$. A \emph{cycle} of $\pi$ is an orbit of the action of the
cyclic group generated by $\pi$ on $M$ (equivalently, an equivalence class
of the relation ``$j=\pi^{k}(i)$ for some integer $k$''). If a cycle has $d$
elements it is called a \emph{$d$-cycle}. Every $\pi\in S_M$ has a
\emph{cycle type} $(1^{a_1}2^{a_2}\cdots m^{a_m})$, meaning $\pi$ has
exactly $a_d$ cycles of length $d$, for $d=1,\dots,m$; necessarily
$\sum_d d\,a_d=m$.
\end{definition}

\begin{proposition}\label{prop:cyclepartition}
The cycles of $\pi\in S_M$ partition $M$.
\end{proposition}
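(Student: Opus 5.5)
The plan is to obtain the statement as a special case of Proposition~\ref{prop:orbitspartition}. By definition, a cycle of $\pi$ is an orbit of the action on $M$ of the cyclic subgroup $\langle\pi\rangle=\{\pi^{k}:k\in\Z\}\subseteq S_M$. So the proof reduces to two checks: that $\langle\pi\rangle$ is a group, and that $g\cdot i:=g(i)$ defines an action of it on $M$. Once these hold, Proposition~\ref{prop:orbitspartition} says the orbits partition $M$, and those orbits are exactly the cycles.

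The first step is to check that $S_M$ is a group under composition. Composition is associative. The identity map is the neutral element. Each bijection has an inverse bijection.

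The second step is to check that $\langle\pi\rangle$ is a subgroup. Define $\pi^{0}=\mathrm{id}$, $\pi^{k+1}=\pi\circ\pi^{k}$, and $\pi^{-k}=(\pi^{-1})^{k}$. The laws $\pi^{a}\pi^{b}=\pi^{a+b}$ then follow by a short induction, so $\langle\pi\rangle$ is closed under products and inverses and contains $\mathrm{id}$.

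The third step is the action axioms, which are immediate: $\mathrm{id}(i)=i$, and $g(h(i))=(g\circ h)(i)$.

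The main (minor) obstacle is reconciling the two descriptions given in the definition of a cycle. Orbits of the action are the classes of the relation ``$j=\pi^{k}(i)$ for some $k\in\Z$''. The proof should verify directly that this relation is an equivalence relation, as a parallel argument:
\begin{itemize}
\item it is reflexive, taking $k=0$;
\item it is symmetric, since $j=\pi^{k}(i)$ gives $i=\pi^{-k}(j)$;
\item it is transitive, since $\pi^{l}(\pi^{k}(i))=\pi^{k+l}(i)$.
\end{itemize}
Its classes therefore partition $M$, and they coincide with the orbits $\Orb(i)=\{\pi^{k}(i):k\in\Z\}$.

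If one wants only nonnegative powers, as is natural for finite $M$, there is one more point. Since $M$ is finite, the sequence $i,\pi(i),\pi^{2}(i),\dots$ must repeat. Injectivity of $\pi$ then forces $\pi^{d}(i)=i$ for some $d\ge1$. Hence $\pi^{-1}(i)=\pi^{d-1}(i)$, and negative powers add nothing new. This remark isn't needed for the partition claim, but it justifies later counting of cycle lengths.
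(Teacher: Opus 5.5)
Your proposal is correct and matches the paper's proof: the paper also applies Proposition~\ref{prop:orbitspartition} to the action of $\langle\pi\rangle$ on $M$ given by $\pi^{k}\cdot i:=\pi^{k}(i)$, and checks only the two action axioms. Your extra verifications (that $\langle\pi\rangle$ is a subgroup, the direct equivalence-relation argument, and the finiteness remark about nonnegative powers) are all sound, but none of them is needed for the statement.
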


\begin{proof}
This is Proposition~\ref{prop:orbitspartition} applied to the action of the
cyclic group $\langle\pi\rangle=\{\pi^{k}:k\in\Z\}$ on $M$ by
$\pi^{k}\cdot i:=\pi^{k}(i)$ (a genuine group action: $\pi^{0}=\mathrm{id}$
acts as the identity, and $\pi^{k}\cdot(\pi^{l}\cdot i)=\pi^{k+l}(i)=
(\pi^{k}\pi^{l})\cdot i$).
\end{proof}

\begin{proposition}\label{prop:numperms-cycletype}
The number of permutations $\pi\in S_m$ of a given cycle type
$(1^{a_1}2^{a_2}\cdots m^{a_m})$ (with $\sum_dd\,a_d=m$) is
\begin{equation}\label{eq:numcycletype}
\frac{m!}{\displaystyle\prod_{d=1}^{m}d^{a_d}\,a_d!}.
\end{equation}
\end{proposition}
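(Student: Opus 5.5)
The plan is to count ordered ``fillings'' of a fixed template and then divide by the size of each fiber, the same two-stage argument used for Proposition~\ref{prop:binomformula}. Fix the cycle type $(1^{a_1}2^{a_2}\cdots m^{a_m})$. Write a template consisting of $a_1$ blocks of length $1$, then $a_2$ blocks of length $2$, and so on, for a total of $\sum_d d\,a_d=m$ slots. A \emph{filling} is a bijection from the $m$ slots to $M=\{1,\dots,m\}$, that is, a word listing every element exactly once. By Proposition~\ref{prop:bijcard} there are exactly $m!$ fillings.

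To each filling I associate the permutation $\pi$ that sends each entry of a block to the next entry in the same block, and the last entry of a block back to the first. I will check three things about this map.
\begin{enumerate}
\item It gives a well-defined bijection $M\to M$.
\item The cycles of $\pi$ are exactly the blocks. This uses the definition of cycles as orbits of $\langle\pi\rangle$ together with Proposition~\ref{prop:cyclepartition}: the orbit of an entry is its block, since iterating $\pi$ runs around the block.
\item Every $\pi$ of the given type arises. Its $a_d$ cycles of length $d$ can be placed into the $a_d$ blocks of length $d$, and each cycle is written in its cyclic order starting from some element.
\end{enumerate}
So the map from fillings onto permutations of the given type is surjective.

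The main step is to show that every fiber has exactly $\prod_d d^{a_d}a_d!$ elements. Fix $\pi$ of the given type. A filling maps to $\pi$ if and only if it arises through two independent choices. First, we assign the $a_d$ cycles of length $d$ bijectively to the $a_d$ blocks of length $d$; for each $d$ this gives $a_d!$ ways by Proposition~\ref{prop:bijcard}. Second, for each cycle $(c,\pi(c),\dots,\pi^{d-1}(c))$, we choose which of its $d$ elements occupies the first slot of its block; the rest of the block is then forced to be the consecutive iterates under $\pi$. The multiplication principle (Proposition~\ref{prop:mult}) then gives $\prod_d a_d!\,d^{a_d}$ fillings.

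The delicate point is proving that distinct choices give distinct fillings, and that no other fillings map to $\pi$. For the latter, any filling mapping to $\pi$ has blocks equal to the cycles of $\pi$ by item (ii). The entries inside each block must follow $\pi$, so the only freedom is the starting element and the assignment of cycles to blocks. For distinctness, two choices that differ either put different cycles, which are disjoint sets, into some block, or put different first entries into some block. In either case the resulting words differ.

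Finally, the fibers partition the $m!$ fillings according to their image $\pi$. The addition principle (Proposition~\ref{prop:add}) then gives $m!=(\#\pi)\cdot\prod_d d^{a_d}a_d!$, and this rearranges to \eqref{eq:numcycletype}. Alternatively, the fiber count can be phrased via the orbit--stabilizer theorem (Theorem~\ref{thm:orbitstab}), using the action of the group of block permutations and rotations on fillings. However, the direct counting above avoids having to verify that this action is free, so I will use it.
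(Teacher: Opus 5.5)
Your proof is correct and takes the same route as the paper: both start from the $m!$ linear orderings cut into blocks of prescribed lengths, read each block as a cycle, and show that each permutation of the given type arises from exactly $\prod_d d^{a_d}a_d!$ orderings (rotations within blocks, and permutations of equal-length blocks). Your explicit check that distinct choices give distinct fillings, and that no other fillings map to $\pi$, is somewhat more careful than the paper's ``two independent sources of overcounting'' argument.
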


\begin{proof}
We build $\pi$ by first writing down its cycles as parenthesized lists and
then forgetting the parenthesization. Start from the $m!$ linear orderings
(permutations, in the elementary sense of Proposition~\ref{prop:bijcard}) of
the elements of $M$. Cut such an ordering into consecutive blocks of lengths
$1,\dots,1$ ($a_1$ times), $2,\dots,2$ ($a_2$ times), and so on up to length
$m$ ($a_m$ times), in that fixed order of block-lengths, and read each block
as a cycle (a block $(i_1,i_2,\dots,i_d)$ denotes the permutation piece
$i_1\mapsto i_2\mapsto\cdots\mapsto i_d\mapsto i_1$). This produces some
permutation $\pi\in S_M$ of the required cycle type, and every $\pi$ of that
cycle type arises this way. It remains to count how many of the $m!$
orderings give rise to the \emph{same} $\pi$.

Two sources of overcounting occur, and they are independent (multiply):
\begin{itemize}
\item \emph{Rotating a block}: a $d$-cycle written as
$(i_1,\dots,i_d)$ denotes the same cyclic permutation as any of its $d$
cyclic rotations $(i_2,\dots,i_d,i_1)$, etc. So each of the $a_d$ blocks of
length $d$ can be independently rotated in $d$ ways without changing $\pi$,
contributing a factor $d^{a_d}$ for each length $d$, hence
$\prod_d d^{a_d}$ in total (multiplication principle, the rotations of
different blocks being independent choices).
\item \emph{Permuting blocks of equal length}: the $a_d$ blocks of length
$d$ can be listed among themselves in any of $a_d!$ orders without changing
the set of cycles they represent (only their order in our fixed
listing-by-length-class changes), contributing a factor $a_d!$ for each
$d$, hence $\prod_d a_d!$ in total.
\end{itemize}
So each cycle type $\pi$ arises from exactly $\prod_d d^{a_d}a_d!$ of the
$m!$ orderings, and the count of Proposition~\ref{prop:mult} for a
multi-stage construction with uniform overcounting gives
$\bigl(\prod_d d^{a_d}a_d!\bigr)\times(\text{number of }\pi\text{ of this type})=m!$,
which rearranges to \eqref{eq:numcycletype}.
\end{proof}

\section{The Cauchy--Frobenius--Burnside Lemma}

\begin{definition}
For a group $G$ acting on a finite set $S$ and $g\in G$, write
$\Fix_S(g)=\{s\in S:g\cdot s=s\}$ for the set of elements fixed by $g$.
\end{definition}

\begin{theorem}[Cauchy--Frobenius--Burnside Lemma]\label{thm:burnside}
If $G$ is a finite group acting on a finite set $S$, the number of orbits of
$G$ on $S$ is
\begin{equation}\label{eq:burnside}
t_S(G)=\frac{1}{|G|}\sum_{g\in G}|\Fix_S(g)|.
\end{equation}
\end{theorem}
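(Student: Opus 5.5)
The plan is to count the set of fixed pairs
\[
P=\{(g,s)\in G\times S: g\cdot s=s\}
\]
in two ways and then use the Orbit--Stabilizer theorem (Theorem~\ref{thm:orbitstab}) to convert the second count into the number of orbits.

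\emph{First count: by group element.} For fixed $g$, the pairs $(g,s)\in P$ are exactly those with $s\in\Fix_S(g)$. The sets $\{g\}\times\Fix_S(g)$ for $g\in G$ partition $P$, so the addition principle (Proposition~\ref{prop:add}) gives $|P|=\sum_{g\in G}|\Fix_S(g)|$.

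\emph{Second count: by set element.} For fixed $s$, the pairs $(g,s)\in P$ are those with $g\in\Stab(s)$. Hence $|P|=\sum_{s\in S}|\Stab(s)|$. By Theorem~\ref{thm:orbitstab}, $|\Stab(s)|=|G|/|\Orb(s)|$, which is legitimate since $|\Orb(s)|\ge1$. Therefore
\[
|P|=|G|\sum_{s\in S}\frac{1}{|\Orb(s)|}.
\]

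\emph{Regrouping by orbit.} By Proposition~\ref{prop:orbitspartition} the orbits $O_1,\dots,O_t$ partition $S$, where $t=t_S(G)$. If $s\in O_j$, then $\Orb(s)=O_j$; this holds because $s\in\Orb(s)\cap O_j$, and two orbits that meet coincide. Splitting the sum accordingly gives
\[
\sum_{s\in S}\frac{1}{|\Orb(s)|}=\sum_{j=1}^{t}\sum_{s\in O_j}\frac{1}{|O_j|}=\sum_{j=1}^{t}1=t.
\]
Thus $|P|=|G|\cdot t$. Equating this with the first count and dividing by $|G|$ yields \eqref{eq:burnside}.

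The only real subtlety is the regrouping step. There one must verify that $\Orb(s)$ depends only on the orbit containing $s$, so that each orbit contributes exactly $|O_j|\cdot\frac1{|O_j|}=1$. This follows from the partition argument above. Everything else is bookkeeping with the addition principle, together with finiteness of $G$ and $S$, which makes all the sums finite.
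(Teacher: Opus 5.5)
Your proof is correct and follows the paper's argument essentially step for step. You double-count the fixed pairs $P$ by group element and by set element, convert $|\Stab(s)|$ to $|G|/|\Orb(s)|$ via the Orbit--Stabilizer theorem, and regroup $\sum_{s}1/|\Orb(s)|$ by orbits (Proposition~\ref{prop:orbitspartition}) so that each orbit contributes exactly $1$.
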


\begin{proof}
Consider the set of pairs $P=\{(g,s)\in G\times S: g\cdot s=s\}$ and count
$|P|$ in two ways.

Counting by $g$ first: for fixed $g$, the number of $s$ with $(g,s)\in P$
is $|\Fix_S(g)|$ by definition, so, by the addition principle applied to the
partition of $P$ according to the value of $g$,
$|P|=\sum_{g\in G}|\Fix_S(g)|$.

Counting by $s$ first: for fixed $s$, the number of $g$ with $(g,s)\in P$ is
$|\Stab(s)|$ by definition, so, by the same principle applied to the
partition of $P$ according to the value of $s$, $|P|=\sum_{s\in S}
|\Stab(s)|$. By the Orbit--Stabilizer theorem (Theorem~\ref{thm:orbitstab}),
$|\Stab(s)|=|G|/|\Orb(s)|$, so
\[
|P|=\sum_{s\in S}\frac{|G|}{|\Orb(s)|}
=|G|\sum_{s\in S}\frac{1}{|\Orb(s)|}.
\]
Now group the sum $\sum_{s\in S}\frac1{|\Orb(s)|}$ by orbits, using
Proposition~\ref{prop:orbitspartition}: for a fixed orbit $\Omega$, every
$s\in\Omega$ contributes the same term $1/|\Omega|$ (since $\Orb(s)=\Omega$
for all $s\in\Omega$), and there are $|\Omega|$ such $s$, so their total
contribution is $|\Omega|\cdot(1/|\Omega|)=1$. Summing over all orbits gives
exactly the number of orbits, $t_S(G)$. Hence $|P|=|G|\cdot t_S(G)$.

Equating the two counts of $|P|$: $\sum_{g\in G}|\Fix_S(g)|=|G|\cdot t_S(G)$,
which rearranges to \eqref{eq:burnside}.
\end{proof}

\begin{remark}
Theorem~\ref{thm:burnside} will be applied throughout Part~I with
$G=S_N$, $S_X$, or $S_N\times S_X$ acting on various sets of binary
relations, families of subsets, or lattice homomorphisms. In every
application, the strategy is the same: (1) identify the fixed points of a
permutation of given cycle type; (2) count them; (3) average over $G$ using
Proposition~\ref{prop:numperms-cycletype} to group permutations of $G=S_m$
by cycle type, replacing the sum over all $m!$ permutations by a sum over
cycle types weighted by \eqref{eq:numcycletype}.
\end{remark}

\chapter{Set Partitions, Stirling and Bell Numbers}\label{ch:stirling}

\section{Set partitions and Stirling numbers of the second kind}

\begin{definition}
A \emph{partition} of a finite set $X$ into $k$ \emph{blocks} is a set
$\{B_1,\dots,B_k\}$ of pairwise disjoint, nonempty subsets of $X$
(\emph{blocks}) with $B_1\cup\cdots\cup B_k=X$. The \emph{Stirling number of
the second kind} $\stir{x}{k}$ is the number of partitions of an
$x$-element set into exactly $k$ blocks (with $\stir{x}{0}=[x=0]$ and
$\stir{x}{k}=0$ for $k>x$).
\end{definition}

\begin{proposition}[Recurrence for Stirling numbers]\label{prop:stirlingrec}
For $x\ge1$, $k\ge1$: $\displaystyle\stir{x}{k}=k\stir{x-1}{k}+\stir{x-1}{k-1}$.
\end{proposition}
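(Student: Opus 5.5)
We split the partitions of $X$ according to what happens to one distinguished element, in the same spirit as the proof of Pascal's rule (Proposition~\ref{prop:pascal}). Let $X$ have $x\ge1$ elements, fix $a_0\in X$, and put $X'=X\setminus\{a_0\}$, which has $x-1$ elements. Every partition of $X$ into $k$ blocks falls into exactly one of two classes: either $\{a_0\}$ is itself a block (a singleton), or the block containing $a_0$ has at least one other element. By the addition principle (Proposition~\ref{prop:add}), it then suffices to show that the first class has $\stir{x-1}{k-1}$ members and the second has $k\stir{x-1}{k}$.

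\emph{First class.} Send a partition $\{\{a_0\},B_2,\dots,B_k\}$ to $\{B_2,\dots,B_k\}$. The result is a partition of $X'$ into $k-1$ blocks: the $B_j$ are disjoint, nonempty, and cover $X'$. The inverse map adjoins the block $\{a_0\}$, so this is a bijection. This already covers the edge case $k=1$: the only partition of $X'$ into $0$ blocks is the empty one, which occurs exactly when $x-1=0$, matching $\stir{x-1}{0}=[x=1]$.

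\emph{Second class.} Deleting $a_0$ from its block leaves a nonempty set, precisely because that block had another element. So deletion produces a partition $\rho$ of $X'$ into $k$ blocks, together with a record of which block of $\rho$ used to contain $a_0$. Conversely, given a partition $\rho$ of $X'$ into $k$ blocks and one of its $k$ blocks, inserting $a_0$ into that block gives a partition of $X$ into $k$ blocks in which $a_0$ is not alone. These two operations are mutually inverse. Hence the second class is in bijection with the pairs ($\rho$, chosen block of $\rho$). By the multiplication principle (Proposition~\ref{prop:mult}), with $\stir{x-1}{k}$ choices for $\rho$ and $k$ for the block, there are $k\stir{x-1}{k}$ such pairs.

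The main obstacle is the second bijection. One has to check that the chosen block is genuinely determined by the partition of $X$, and that distinct choices give distinct partitions of $X$. This holds because the blocks of $\rho$ are distinct, disjoint sets, so adding $a_0$ to different blocks produces different set systems. The degenerate cases $k>x-1$, where $\stir{x-1}{k}=0$, are handled automatically, since both sides then count empty sets.
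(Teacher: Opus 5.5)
Your proof is correct and follows the paper's own argument essentially verbatim: fix $a_0$, split the partitions of $X$ according to whether $\{a_0\}$ is a singleton block, and biject the two classes with partitions of $X'$ into $k-1$ blocks and with pairs (partition of $X'$ into $k$ blocks, chosen block). Your explicit treatment of the edge case $k=1$ via $\stir{x-1}{0}=[x=1]$ is a small addition that the paper leaves implicit.
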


\begin{proof}
Let $X$ be a set of size $x$ and fix $a_0\in X$; let $X'=X\setminus\{a_0\}$,
of size $x-1$. Given a partition of $X$ into $k$ blocks, exactly one block
contains $a_0$; removing $a_0$ from that block yields a partition of $X'$
into $k$ blocks (if the block containing $a_0$ had other elements, it
remains nonempty) or into $k-1$ nonempty blocks and one now-empty position
(if $a_0$ was alone in its block, so its block disappears). This splits the
partitions of $X$ into $k$ blocks into two disjoint classes:

\emph{Case A ($a_0$ is not alone in its block).} Removing $a_0$ gives a
partition of $X'$ into $k$ blocks, and conversely, given any partition of
$X'$ into $k$ blocks, $a_0$ can be inserted into any one of the $k$ existing
blocks to recover a partition of $X$ in Case A; different choices of which
block receives $a_0$ give different partitions of $X$ (they differ in which
block contains $a_0$), and this correspondence is a bijection (its inverse is
exactly ``remove $a_0$''). Hence Case A has $k\cdot\stir{x-1}{k}$ elements.

\emph{Case B ($a_0$ is alone in its block).} Removing the singleton block
$\{a_0\}$ gives a partition of $X'$ into $k-1$ blocks, and conversely any
partition of $X'$ into $k-1$ blocks extends uniquely (adjoin the new block
$\{a_0\}$) to a partition of $X$ in Case B. This is a bijection, so Case B
has $\stir{x-1}{k-1}$ elements.

By the addition principle, $\stir xk=k\stir{x-1}k+\stir{x-1}{k-1}$.
\end{proof}

\begin{proposition}[Explicit formula via inclusion--exclusion]
\label{prop:stirlingformula}
For $x,k\ge0$,
\begin{equation}\label{eq:stirlingformula}
\stir xk=\frac1{k!}\sum_{j=0}^{k}(-1)^{j}\binom kj(k-j)^{x}.
\end{equation}
\end{proposition}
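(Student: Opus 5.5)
The plan is to count surjections in two ways. Let $X$ be a set of size $x$ and $K=\{1,\dots,k\}$. Write $\mathrm{Surj}(X,K)$ for the set of surjective functions $X\to K$.

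The first step is to show $|\mathrm{Surj}(X,K)|=k!\stir xk$. A surjection $f$ determines the partition $\{f^{-1}(1),\dots,f^{-1}(k)\}$ of $X$ into exactly $k$ blocks. The fibres are nonempty by surjectivity, pairwise disjoint because $f$ is a function, and they cover $X$. Conversely, from a partition into $k$ blocks together with a bijection from the set of blocks to $K$ we recover a unique surjection. By Proposition~\ref{prop:bijcard} there are $k!$ such bijections, and the multiplication principle (Proposition~\ref{prop:mult}) gives $k!\stir xk$. The small details to check are that distinct labelings give distinct functions and that every surjection arises exactly once.

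The second step is to count $\mathrm{Surj}(X,K)$ by inclusion--exclusion. Let the ambient set be all functions $X\to K$; there are $k^x$ of them by the multiplication principle. For each $i\in K$, let $A_i$ be the set of functions missing the value $i$. The surjections are exactly the functions lying in none of the $A_i$. For $I\subseteq K$ with $|I|=j$, the intersection $\bigcap_{i\in I}A_i$ is the set of functions $X\to K\setminus I$, so it has $(k-j)^x$ elements. Equation~\eqref{eq:incexc-complement} of Proposition~\ref{prop:incexc} then gives
\[
|\mathrm{Surj}(X,K)|=\sum_{I\subseteq K}(-1)^{|I|}(k-|I|)^x .
\]
Grouping the terms by $j=|I|$ turns this into $\sum_{j=0}^k(-1)^j\binom kj(k-j)^x$, since there are $\binom kj$ subsets of size $j$. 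Equating with the first step and dividing by $k!$ yields \eqref{eq:stirlingformula}.

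The main obstacle is the edge cases, which need the convention $0^0=1$.

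\begin{itemize}
\item If $x=0$, the only function is the empty function. It is surjective iff $k=0$, matching $\stir 00=1$ and $\stir 0k=0$ for $k\ge1$. The right side gives $\sum_j(-1)^j\binom kj(k-j)^0$. For $k\ge1$ the $j=k$ term is $0^0=1$, so every term is $1$ and the sum is $(1-1)^k=0$. For $k=0$ the single term is $1$.
\item If $k=0$ and $x\ge1$, there are no functions $X\to\varnothing$, and indeed $0^x=0$.
\item If $k>x$, both sides vanish. No surjection exists, and the formula is consistent automatically, because both steps hold for all $x,k\ge0$ and do not need $k\le x$.
\end{itemize}

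I would state the convention $0^0=1$ explicitly, since it is what makes $|\bigcap_{i\in K}A_i|=0^x$ correct when $x=0$.
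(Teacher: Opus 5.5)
Your proposal is correct and follows the paper's proof essentially step for step. It identifies surjections $X\to K$ with partitions of $X$ into $k$ blocks together with a labeling of the blocks, a $k!$-to-one correspondence, then counts surjections via \eqref{eq:incexc-complement} with $A_i$ the functions missing the value $i$, groups the terms by $j=|I|$, and divides by $k!$. Your explicit use of the convention $0^0=1$ for the $x=0$ cases is a worthwhile detail that the paper leaves implicit.
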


\begin{proof}
Let $X$ be an $x$-set and $K=\{1,\dots,k\}$. The number of \emph{surjective}
functions $X\to K$ equals $k!\stir xk$: indeed, a surjection $f\colon X\to K$
determines an (ordered) partition of $X$ into the $k$ nonempty fiber sets
$f^{-1}(1),\dots,f^{-1}(k)$; forgetting the order (i.e.\ forgetting which
fiber is ``number $1$'', etc.) gives an (unordered) partition into $k$
blocks, and every unordered partition into $k$ blocks arises from exactly
$k!$ surjections (the $k!$ ways to assign the labels $1,\dots,k$ to its $k$
blocks). This is a $k!$-to-one correspondence, so by
Proposition~\ref{prop:mult} applied in reverse (or directly, partitioning the
set of surjections into classes of size $k!$ indexed by partitions, and using
the addition principle), the number of surjections is $k!$ times the number
of partitions, i.e.\ $k!\stir xk$.

We now count surjections $X\to K$ by inclusion--exclusion
(Proposition~\ref{prop:incexc}). Let $A_i\subseteq K^{X}$ (functions $X\to
K$) be the set of functions that \emph{miss} value $i$, i.e.\ $i\notin
f(X)$, for $i=1,\dots,k$; the surjections are exactly the functions in none
of the $A_i$ (a function is surjective iff it misses no value). By
\eqref{eq:incexc-complement}, the number of surjections is
$\sum_{I\subseteq K}(-1)^{|I|}|\bigcap_{i\in I}A_i|$. Now
$\bigcap_{i\in I}A_i$ is the set of functions $X\to K$ avoiding all values in
$I$, i.e.\ functions $X\to K\setminus I$; there are $(k-|I|)^{x}$ of these
(each of the $x$ elements of $X$ independently has $k-|I|$ possible images,
by the multiplication principle). Grouping the sum over $I$ by $j=|I|$, of
which there are $\binom kj$ subsets $I$ of size $j$, gives
\[
\#\{\text{surjections }X\to K\}=\sum_{j=0}^{k}(-1)^{j}\binom kj(k-j)^{x}.
\]
Dividing by $k!$ and using the previous paragraph's identity
$k!\stir xk=\#\{\text{surjections}\}$ gives \eqref{eq:stirlingformula}.
\end{proof}

\section{Bell numbers}

\begin{definition}
The \emph{Bell number} $B_x$ is the total number of partitions of an
$x$-element set into any number of (nonempty) blocks:
$B_x=\sum_{k=0}^{x}\stir xk$, with $B_0=1$ (the empty set has exactly one
partition, the empty partition, into $0$ blocks).
\end{definition}

\begin{proposition}[Bell recurrence]\label{prop:bellrec}
For $x\ge0$, $\displaystyle B_{x+1}=\sum_{k=0}^{x}\binom xk B_k$.
\end{proposition}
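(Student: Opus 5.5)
The plan is the standard bijective argument: classify the partitions of an $(x+1)$-element set by the block that contains a distinguished element. Let $Y$ be a set of size $x+1$, fix $a_0\in Y$, and let $Y'=Y\setminus\{a_0\}$, which has size $x$. In any partition $\mathcal P$ of $Y$, exactly one block $B$ contains $a_0$. Put $T=B\setminus\{a_0\}\subseteq Y'$ and $R=Y'\setminus T$. The remaining blocks $\mathcal P\setminus\{B\}$ are pairwise disjoint nonempty subsets of $Y'$ whose union is $R$, so they form a partition of $R$.

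Next I will check that the map $\mathcal P\mapsto(R,\mathcal Q)$, with $\mathcal Q=\mathcal P\setminus\{B\}$ a partition of $R$, is a bijection. The target is the set of pairs consisting of a subset $R\subseteq Y'$ and a partition $\mathcal Q$ of $R$. The inverse sends $(R,\mathcal Q)$ to $\mathcal Q\cup\{\{a_0\}\cup(Y'\setminus R)\}$.

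\begin{itemize}
\item The adjoined block is nonempty because it contains $a_0$.
\item It is disjoint from every block of $\mathcal Q$, since those lie inside $R$.
\item The union of all the blocks is $R\cup(Y'\setminus R)\cup\{a_0\}=Y$, so the result is a partition of $Y$.
\end{itemize}

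The two composites are identities, since the block containing $a_0$ is recovered as $\{a_0\}\cup T$, and conversely. The edge case $R=\varnothing$ uses the empty partition, which is why the convention $B_0=1$ is needed. This corresponds to $B=Y$.

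The count then follows by the addition principle (Proposition~\ref{prop:add}), partitioning the pairs according to $k=|R|$. There are $\binom xk$ choices of a $k$-subset $R\subseteq Y'$, and for each one there are $B_k$ partitions of $R$, since $B_k$ depends only on $|R|$. By the multiplication principle (Proposition~\ref{prop:mult}), the class with $|R|=k$ has $\binom xk B_k$ elements, and summing over $k=0,\dots,x$ gives $B_{x+1}$.

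No step is a real obstacle. The only point needing care is stating the bijection cleanly, in particular checking that the inverse produces a genuine partition with nonempty, disjoint, covering blocks, and handling the case $R=\varnothing$.
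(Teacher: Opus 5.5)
Your proof is correct and is essentially the paper's argument: both classify the partitions of the $(x+1)$-element set by the block containing a fixed element $a_0$, then count with the addition and multiplication principles. The only difference is that you index by the complement $R=Y'\setminus T$ of $a_0$'s block-mates rather than by the block-mates themselves. This gives $\binom xk B_k$ directly and spares you the paper's final reindexing $k\mapsto x-k$ via $\binom{x}{x-k}=\binom xk$.
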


\begin{proof}
Let $X$ be a set of size $x+1$ and fix $a_0\in X$. In any partition of $X$,
let $A$ be the block containing $a_0$, and let $A'=A\setminus\{a_0\}
\subseteq X\setminus\{a_0\}$, a subset of size $k:=|A|-1$, where $k$ ranges
over $0,\dots,x$. Once $A'$ is chosen (a subset of the $x$-element set
$X\setminus\{a_0\}$ of some size $k$, in $\binom xk$ ways, by definition of
the binomial coefficient), the rest of the partition is an arbitrary
partition of the remaining $x-k$ elements $X\setminus\{a_0\}\setminus A'$
into any number of blocks, of which there are $B_{x-k}$ by definition of the
Bell number. By the multiplication principle, the number of partitions of
$X$ with $|A'|=k$ is $\binom xk B_{x-k}$, and summing over $k=0,\dots,x$
(addition principle, since these cases are disjoint and exhaustive) gives
$B_{x+1}=\sum_{k=0}^{x}\binom xk B_{x-k}$. Reindexing the sum by
$k\mapsto x-k$ (a bijection of $\{0,\dots,x\}$ with itself) and using
$\binom{x}{x-k}=\binom xk$ (immediate from
Proposition~\ref{prop:binomformula}, or from the bijection $A\mapsto
X\setminus A$ between $k$-subsets and $(x-k)$-subsets) gives the stated
form $B_{x+1}=\sum_k\binom xk B_k$.
\end{proof}

\section{Falling factorials}

\begin{definition}
For an integer $k\ge0$ and a variable (or nonnegative integer) $x$, the
\emph{falling factorial} is $\ffall xk=x(x-1)\cdots(x-k+1)$ (a product of
$k$ consecutive descending terms, with $\ffall x0=1$).
\end{definition}

\begin{proposition}\label{prop:injectionscount}
For finite sets $N,X$ with $|N|=n\le x=|X|$, the number of injective
functions $N\to X$ is $\ffall xn=x(x-1)\cdots(x-n+1)$, and consequently
$\binom xn=\ffall xn/n!$.
\end{proposition}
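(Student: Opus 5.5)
The plan is to count injections by a sequential-choice argument, exactly as in the proofs of Propositions~\ref{prop:bijcard} and~\ref{prop:binomformula}, and then derive the binomial identity by comparing two counts. First I list $N=\{i_1,\dots,i_n\}$ without repeats. An injective $f\colon N\to X$ is then the same thing as the sequence $(f(i_1),\dots,f(i_n))$ of pairwise distinct elements of $X$. The correspondence is bijective in both directions: any sequence of distinct elements defines an injection by $i_j\mapsto a_j$, and different sequences give different functions.

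I will count these sequences with the multiplication principle (Proposition~\ref{prop:mult}). The image of $i_1$ has $x$ choices. Once $f(i_1),\dots,f(i_{j-1})$ are fixed and distinct, $f(i_j)$ must avoid exactly $j-1$ elements, which leaves $x-(j-1)$ choices whatever the earlier outcomes were. Because $n\le x$, every factor $x-j+1$ with $j\le n$ is at least $1$, so the process never stalls. The product is therefore $x(x-1)\cdots(x-n+1)=\ffall xn$.

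For the consequence, I split the injections $N\to X$ according to their image $f(N)$, which is an $n$-element subset $A\subseteq X$ since $f$ is injective. For a fixed $A$, the injections with image exactly $A$ are the bijections $N\to A$. By Proposition~\ref{prop:bijcard} there are $n!$ of these. The addition principle (Proposition~\ref{prop:add}), applied over the $\binom xn$ possible subsets $A$, then gives $\ffall xn=\binom xn\,n!$. Dividing by $n!$ yields $\binom xn=\ffall xn/n!$. This agrees with Proposition~\ref{prop:binomformula} but is obtained here independently.

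I do not expect a real obstacle. The only point needing care is that the number of choices at step $j$ does not depend on which earlier values were chosen, which is the hypothesis of Proposition~\ref{prop:mult}. I would state explicitly that the count $x-j+1$ depends only on how many values have been used, not on which ones.
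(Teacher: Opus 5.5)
Your proposal is correct and matches the paper's proof: both count injections by choosing $f(i_1),\dots,f(i_n)$ in turn and applying the multiplication principle. For the identity $\binom xn=\ffall xn/n!$, the paper just cites Proposition~\ref{prop:binomformula}, whereas you re-derive it by grouping injections by their image; that is the same grouping-by-underlying-subset argument the paper uses to prove Proposition~\ref{prop:binomformula}, so your version only adds self-containment.
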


\begin{proof}
This is exactly the sequence-counting argument already used in the proof of
Proposition~\ref{prop:binomformula}: listing $N=\{i_1,\dots,i_n\}$, an
injective function is built by choosing $f(i_1)\in X$ ($x$ ways), then
$f(i_2)\in X\setminus\{f(i_1)\}$ ($x-1$ ways, to keep injectivity), and so
on, down to $f(i_n)$ with $x-n+1$ remaining choices; by the multiplication
principle the total is $x(x-1)\cdots(x-n+1)=\ffall xn$. The formula
$\binom xn=\ffall xn/n!$ was already established in
Proposition~\ref{prop:binomformula}.
\end{proof}

\chapter{The Algebra of Generating Functions}\label{ch:gf}

\section{Formal power series}

\begin{definition}
A \emph{formal power series} in a variable $z$ (over the rational numbers,
say) is an expression $A(z)=\sum_{n\ge0}a_nz^{n}$, i.e.\ simply a sequence
$(a_n)_{n\ge0}$, with no question of convergence: $z$ is a bookkeeping
symbol. Two series are added and multiplied by
\[
\Bigl(\sum_na_nz^{n}\Bigr)+\Bigl(\sum_nb_nz^{n}\Bigr)=\sum_n(a_n+b_n)z^{n},
\qquad
\Bigl(\sum_na_nz^{n}\Bigr)\Bigl(\sum_nb_nz^{n}\Bigr)=\sum_n\Bigl(\sum_{k=0}^{n}
a_kb_{n-k}\Bigr)z^{n},
\]
the second rule (Cauchy product) being forced by formally expanding the
product and collecting like powers of $z$.
\end{definition}

\begin{definition}
The \emph{ordinary generating function} (OGF) of a sequence $(a_n)_{n\ge0}$
counting some combinatorial objects (with $a_n$ objects of ``size'' $n$) is
$A(z)=\sum_na_nz^n$. Two combinatorial sequences with a size parameter
combine as follows: if $c_n$ counts pairs (object of size $k$ from a family
counted by $(a_k)$, object of size $n-k$ from a family counted by $(b_{n-k})$),
summed over $k$, then the OGF of $(c_n)$ is $A(z)B(z)$, by the definition of
the Cauchy product above; this is called the \emph{product rule} for OGFs.
\end{definition}

\begin{proposition}[Geometric series]\label{prop:geomseries}
As formal power series, $\dfrac1{1-z}=\sum_{n\ge0}z^{n}$.
\end{proposition}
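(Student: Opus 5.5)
The statement is to be read in the ring of formal power series, where $\frac1{1-z}$ simply means the multiplicative inverse of the series $1-z=1\cdot z^0+(-1)\cdot z^1+0\cdot z^2+\cdots$. So the plan is to exhibit $G(z)=\sum_{n\ge0}z^n$ and check, using only the Cauchy product rule from the definition above, that $(1-z)G(z)=1$ and $G(z)(1-z)=1$. Then $G$ is an inverse of $1-z$, and we show such an inverse is unique, which justifies writing $\frac1{1-z}=G(z)$.

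\textbf{Step 1 (the product).} Write $1-z=\sum_n a_nz^n$ with $a_0=1$, $a_1=-1$ and $a_k=0$ for $k\ge2$. Write $G=\sum_n b_nz^n$ with $b_n=1$ for all $n$. By the Cauchy product, the coefficient of $z^n$ in $(1-z)G$ is $\sum_{k=0}^n a_kb_{n-k}$. For $n=0$ this is $a_0b_0=1$. For $n\ge1$ only $k=0,1$ contribute, giving $1\cdot1+(-1)\cdot1=0$. Hence $(1-z)G=1$. This is the ``telescoping cancellation'' mentioned in Corollary~\ref{cor:multiset-ogf}. Commutativity of the Cauchy product is immediate by reindexing $k\mapsto n-k$, so $G(1-z)=1$ as well.

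\textbf{Step 2 (uniqueness of the inverse).} Suppose $H$ is any series with $(1-z)H=1$. We want $H=G$, so that $\frac1{1-z}$ is well defined. The cleanest route is associativity: $H=(G(1-z))H=G((1-z)H)=G$. Associativity of the Cauchy product holds because both groupings give coefficient $\sum_{i+j+k=n}a_ib_jc_k$.

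Alternatively, one can avoid associativity and read off $H$ directly. The coefficients $h_n$ are forced recursively: $h_0=1$, and for $n\ge1$, $h_n-h_{n-1}=0$, so every $h_n=1$ and $H=G$.

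Either route works; I would present the recursive one because it needs nothing beyond the definitions. No step is a genuine obstacle. The only subtle point is conceptual: $\frac1{1-z}$ has to be \emph{defined} as the inverse of $1-z$, and uniqueness (Step 2) is what makes that definition legitimate. Without it the identity would only say that $G$ is \emph{an} inverse.
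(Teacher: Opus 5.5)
Your proposal is correct and takes the same route as the paper: multiply $\sum_{n\ge0}z^n$ by $1-z$, observe the telescoping cancellation (your computation $a_0b_n+a_1b_{n-1}=0$ for $n\ge1$), and conclude that the series is a two-sided inverse of $1-z$. Your Step~2 on uniqueness of the inverse is a correct addition; the paper leaves it implicit when it writes ``i.e.\ equals $1/(1-z)$''.
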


\begin{proof}
Multiply the right side by $1-z$:
$(1-z)\sum_{n\ge0}z^{n}=\sum_{n\ge0}z^{n}-\sum_{n\ge0}z^{n+1}
=\sum_{n\ge0}z^n-\sum_{n\ge1}z^{n}=z^{0}=1$
(the sums telescope: every term $z^n$ with $n\ge1$ appears once with $+$
sign from the first sum and once with $-$ sign from the second). Hence
$\sum_{n\ge0}z^n$ is a two-sided inverse of $1-z$ in the ring of formal power
series, i.e.\ equals $1/(1-z)$.
\end{proof}

We already used this in Corollary~\ref{cor:multiset-ogf} to obtain
$1/(1-z)^{c}=\sum_n\binom{c+n-1}nz^{n}$. We record the two-variable
(bivariate) version, used repeatedly in Part~II.

\begin{definition}
For a two-parameter sequence $a(n,x)$ (e.g.\ counting objects with two size
parameters $n,x$), the \emph{bivariate OGF} is
$A(z,w)=\sum_{n,x\ge0}a(n,x)z^{n}w^{x}$, a formal power series in two
variables, with the product rule for independent combination of an
$n$-object and an $x$-object holding exactly as above in each variable.
\end{definition}

\section{Exponential generating functions}

\begin{definition}
The \emph{exponential generating function} (EGF) of a sequence
$(a_n)_{n\ge0}$, where $a_n$ counts \emph{labeled} structures on an
$n$-element ground set (i.e.\ the ground set's elements are distinguishable,
and relabeling the ground set by any bijection of $\{1,\dots,n\}$ with
itself produces another valid structure of the same kind), is
$A(z)=\sum_{n\ge0}a_n\dfrac{z^{n}}{n!}$.
\end{definition}

\begin{proposition}[EGF product rule]\label{prop:egfproduct}
Suppose a labeled structure of ``size'' $n$ on ground set $M$ ($|M|=n$)
consists of an ordered pair (structure of type $\mathcal A$ on a subset
$S\subseteq M$, structure of type $\mathcal B$ on the complementary subset
$M\setminus S$), where $\mathcal A$-structures are counted by $(a_k)$ on
ground sets of size $k$ and $\mathcal B$-structures by $(b_k)$ on ground sets
of size $k$, and where the pair is otherwise unconstrained (any subset $S$,
any $\mathcal A$-structure on $S$, any $\mathcal B$-structure on the rest).
If $c_n$ is the number of such pairs on an $n$-element ground set, then the
EGF of $(c_n)$ is the product $A(z)B(z)$ of the EGFs of $(a_k)$ and $(b_k)$.
\end{proposition}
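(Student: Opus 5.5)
The plan is to count $c_n$ directly, then compare with the coefficient of $z^n/n!$ in $A(z)B(z)$ computed from the Cauchy product rule.

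\emph{Counting step.} Fix a ground set $M$ with $|M|=n$. Partition the set of admissible pairs according to the subset $S\subseteq M$ carrying the $\mathcal A$-structure, and further according to $k=|S|$. For a fixed $S$ of size $k$, the pair is an independent choice of an $\mathcal A$-structure on $S$ and a $\mathcal B$-structure on $M\setminus S$. The first choice has $a_k$ outcomes, because the count of $\mathcal A$-structures depends only on the size of the ground set; this is the labeled hypothesis, since any bijection $S\to\{1,\dots,k\}$ transports structures bijectively. Likewise the second choice has $b_{n-k}$ outcomes. The multiplication principle (Proposition~\ref{prop:mult}) therefore gives $a_kb_{n-k}$ pairs for each $S$. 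Distinct $S$ give distinct pairs, and there are $\binom nk$ subsets of size $k$. The addition principle (Proposition~\ref{prop:add}) then yields
\[
c_n=\sum_{k=0}^{n}\binom nk a_k b_{n-k}.
\]

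\emph{Coefficient step.} Apply the Cauchy product to the series $\sum_k (a_k/k!)z^k$ and $\sum_j (b_j/j!)z^j$. The coefficient of $z^n$ in $A(z)B(z)$ is
\[
\sum_{k=0}^n \frac{a_k}{k!}\frac{b_{n-k}}{(n-k)!}.
\]
Multiplying by $n!$ and using $\binom nk=n!/(k!(n-k)!)$ (Proposition~\ref{prop:binomformula}) turns this into exactly the sum above. Hence $n![z^n]A(z)B(z)=c_n$ for every $n$, which says that $\sum_n c_n z^n/n!=A(z)B(z)$ as formal power series, because two series are equal when all their coefficients agree.

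\emph{Main obstacle.} The only delicate point is the justification that the number of $\mathcal A$-structures on an arbitrary $k$-subset $S$ equals $a_k$, which is defined via ground sets of size $k$. I would handle this explicitly through the relabeling clause in the definition of an EGF: a bijection $S\to\{1,\dots,k\}$ induces a bijection between structures on $S$ and structures on $\{1,\dots,k\}$. I would also check the edge cases $k=0$ and $k=n$, which rely on the conventions $a_0$, $b_0$ for the empty ground set. Everything else is bookkeeping.
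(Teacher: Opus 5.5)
Your proof is correct and follows essentially the same argument as the paper. Both split the pairs according to the subset $S$ of size $k$, using the relabeling hypothesis to get $a_k$ structures on $S$, to derive $c_n=\sum_{k}\binom nk a_kb_{n-k}$ from the multiplication and addition principles. Both then identify $c_n/n!$ with the Cauchy-product coefficient of $z^n$ in $A(z)B(z)$.
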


\begin{proof}
Fix a ground set $M$ of size $n$. To build a pair, first choose the subset
$S\subseteq M$ to carry the $\mathcal A$-structure: if $|S|=k$, there are
$\binom nk$ choices of $S$ (definition of the binomial coefficient), then
$a_k$ choices of $\mathcal A$-structure on $S$ (since $\mathcal A$-structures
on any fixed $k$-set number $a_k$, by the labeled/relabeling-invariance
hypothesis: any bijection $S\to\{1,\dots,k\}$ transports the counting
problem to the standard ground set), and then $b_{n-k}$ choices of
$\mathcal B$-structure on $M\setminus S$. By the multiplication principle,
the number of pairs with $|S|=k$ is $\binom nk a_kb_{n-k}$, and summing over
$k=0,\dots,n$ (addition principle) gives
\[
c_n=\sum_{k=0}^{n}\binom nk a_kb_{n-k}
=\sum_{k=0}^{n}\frac{n!}{k!(n-k)!}a_kb_{n-k}.
\]
Hence $\dfrac{c_n}{n!}=\sum_{k=0}^{n}\dfrac{a_k}{k!}\cdot\dfrac{b_{n-k}}{(n-k)!}$,
which is exactly the coefficient of $z^n$ in the Cauchy product
$A(z)B(z)=\bigl(\sum_ka_kz^k/k!\bigr)\bigl(\sum_lb_lz^l/l!\bigr)$. So the EGF
of $(c_n)$, namely $\sum_nc_n z^n/n!$, equals $A(z)B(z)$.
\end{proof}

\begin{proposition}[EGF of functions and of the exponential]\label{prop:egfexp}
The EGF of the sequence $a_n=1$ for all $n\ge0$ (one structure per ground
set: e.g.\ ``the whole ground set, undecorated'') is $e^{z}=\sum_{n\ge0}
z^n/n!$; the EGF of the sequence counting pairs of independent unconstrained
choices as in Proposition~\textup{\ref{prop:egfproduct}} with both $a_k=b_k=1$
is $e^{2z}$; and more generally $r$ independent unconstrained
``decorations'' assembled disjointly on a ground set have EGF $e^{rz}$.
\end{proposition}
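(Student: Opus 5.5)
The plan has three steps, one for each clause of Proposition~\ref{prop:egfexp}. In each step I compute the counting sequence directly and then read off its EGF. I use only the definition of the EGF, the EGF product rule (Proposition~\ref{prop:egfproduct}), the binomial theorem (Proposition~\ref{prop:binomthm}), and the formal identity $e^{z}=\sum_{n\ge0}z^n/n!$. I take that identity as the definition of $e^{z}$ as a formal power series, and I define $e^{rz}$ to be the series $\sum_n r^n z^n/n!$, obtained by substituting $rz$ for $z$.

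\emph{First clause.} If $a_n=1$ for every $n$, the EGF is $\sum_n 1\cdot z^n/n!=e^{z}$, directly from the definition. There is nothing further to prove.

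\emph{Second clause.} I apply Proposition~\ref{prop:egfproduct} with $a_k=b_k=1$. This gives $c_n=\sum_{k=0}^n\binom nk=2^n$ by the binomial theorem with $u=v=1$. It also matches the direct count: a pair $(S,M\setminus S)$ is just a subset of $M$, or equivalently a function $M\to\{1,2\}$. Proposition~\ref{prop:egfproduct} gives the EGF as $e^{z}\cdot e^{z}$. Counting directly instead gives the EGF $\sum_n 2^n z^n/n!=e^{2z}$. The two expressions agree, and this agreement is the formal identity $e^{z}e^{z}=e^{2z}$. I will check that identity coefficientwise, since the coefficient of $z^n$ in $e^ze^z$ is $\sum_k\frac{1}{k!(n-k)!}=2^n/n!$.

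\emph{General clause.} I induct on $r$. The base case $r=1$ is the first clause; I also note that $r=0$ gives the empty decoration and the sequence $[n=0]$, with EGF $1=e^{0z}$. For $r$ decorations, an assembly on $M$ is an ordered partition of $M$ into $r$ possibly empty pieces $S_1,\dots,S_r$. Equivalently, it is a function $M\to\{1,\dots,r\}$, so there are $r^n$ of them by the multiplication principle (Proposition~\ref{prop:mult}). The EGF is therefore $\sum_n r^nz^n/n!=e^{rz}$.

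To get the product form $e^{(r-1)z}e^{z}$ instead, I split off the last piece $S_r$ and apply Proposition~\ref{prop:egfproduct}. This gives $c_n=\sum_k\binom nk(r-1)^k\cdot 1=r^n$, again by the binomial theorem, now with $u=r-1$ and $v=1$. So the direct count and the product rule agree at every step of the induction.

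The only step that needs care is the meaning of ``$e^{rz}$'' as a formal power series and its consistency with products. I will handle it by defining $e^{rz}$ coefficientwise and verifying $e^{az}e^{bz}=e^{(a+b)z}$ through the binomial theorem, as above. Everything else is immediate.
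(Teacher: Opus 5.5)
Your proposal is correct and follows essentially the paper's route: the first clause is the definition of $e^{z}$, and the second is Proposition~\ref{prop:egfproduct} with $a_k=b_k=1$. The general clause comes, as in the paper, from iterating the product rule or directly counting the $r^{n}$ ways to distribute $M$ among $r$ pieces. Your coefficientwise check of $e^{az}e^{bz}=e^{(a+b)z}$ via the binomial theorem supplies a step the paper takes for granted when it writes $e^{z}\cdot e^{z}=e^{2z}$.
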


\begin{proof}
By definition, $\sum_nz^n/n!=e^{z}$ (this is one standard definition of the
exponential function as a formal power series; equivalently one checks
directly that $\frac{d}{dz}\sum_nz^n/n!=\sum_nz^n/n!$ term by term, matching
the defining property of $e^z$). For the second statement, apply
Proposition~\ref{prop:egfproduct} with $a_k=b_k=1$ for all $k$ (each has EGF
$e^z$ by the first part), giving EGF $e^{z}\cdot e^{z}=e^{2z}$; iterating $r-1$
more times (or applying the same counting argument directly with $r$
simultaneous subset choices partitioning $M$) gives $e^{rz}$ for $r$
independent decorations.
\end{proof}

\section{Rational functions and quasi-polynomials}

\begin{definition}
A sequence $(a_n)_{n\ge0}$ is a \emph{quasi-polynomial} of degree $d$ and
period $p$ if there exist polynomials $q_0,\dots,q_{p-1}$, each of degree
$d$, such that $a_n=q_r(n)$ whenever $n\equiv r\pmod p$ (i.e.\ $a_n$ is
given by a polynomial formula in $n$ that depends only on $n\bmod p$).
\end{definition}

\begin{proposition}\label{prop:rationaltoqp}
If $A(z)=\sum_na_nz^{n}$ is a rational function $A(z)=P(z)/Q(z)$ (with
$P,Q$ polynomials, $Q(0)\neq0$) all of whose poles (roots of $Q$) are roots
of unity, then $(a_n)_{n\ge0}$ is a quasi-polynomial, of degree one less
than the maximal multiplicity of a pole and of period equal to (a multiple
of) the least common multiple of the orders of the roots of unity appearing
as poles.
\end{proposition}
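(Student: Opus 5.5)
The plan is to reduce everything to a single common denominator of the form $(1-z^p)^m$, where $p$ is the least common multiple of the orders of the poles and $m$ is the maximal pole multiplicity. Then I will read off coefficients using Corollary~\ref{cor:multiset-ogf}.

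First I will normalize $Q$. Since $Q(0)\neq0$, factor $Q(z)=c\prod_j(1-\zeta_j^{-1}z)^{m_j}$ over the distinct roots $\zeta_j$ (working over $\mathbb C$, or a cyclotomic extension of the rationals). Each $\zeta_j$ is a root of unity of order $d_j$. Let $p=\operatorname{lcm}(d_j)$ and $m=\max_j m_j$. Because $\zeta_j^{-1}$ is also a $p$-th root of unity, the polynomial $1-z^p=\prod_{\omega^p=1}(1-\omega z)$ contains each factor $(1-\zeta_j^{-1}z)$ exactly once. Hence $(1-z^p)^m=Q(z)R(z)$ for some polynomial $R$, and
\[
A(z)=\frac{P(z)R(z)}{(1-z^p)^m}=\frac{S(z)}{(1-z^p)^m}
\]
with $S$ a polynomial. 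Since $A$ and $(1-z^p)^m$ have rational coefficients, $S$ does as well, being their product.

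Next I will expand the denominator. By Corollary~\ref{cor:multiset-ogf} with $z$ replaced by $z^p$,
\[
\frac{1}{(1-z^p)^m}=\sum_{k\ge0}\binom{m+k-1}{m-1}z^{pk},
\]
and $\binom{m+k-1}{m-1}=\frac{(k+1)(k+2)\cdots(k+m-1)}{(m-1)!}$ is a polynomial in $k$ of degree $m-1$. Write $S(z)=\sum_{i=0}^{s}s_iz^i$. Then
\[
a_n=\sum_{i\le n,\ i\equiv n \pmod p}s_i\binom{m+\frac{n-i}{p}-1}{m-1}.
\]
Fix a residue $r$. For $n\equiv r$ and $n\ge s$, this is a finite sum of polynomials in $n$ of degree at most $m-1$, so it equals a polynomial $q_r(n)$ of degree at most $m-1$. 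This gives quasi-polynomiality with period $p$. The same computation with any multiple of $p$ in place of $p$ also works, which explains the phrase ``(a multiple of)''.

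The main obstacle is making the statement exactly true. There are two issues.

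The first is small $n$. The formula only holds once $n\ge\deg S$, so I will first reduce to $\deg P<\deg Q$. To do this, divide $P$ by $Q$; the polynomial quotient only changes finitely many $a_n$. I expect to state the result with the understanding that it holds for all sufficiently large $n$, or to assume $\deg P<\deg Q$ outright. In the latter case $\deg S<pm$, and then $\binom{m+(n-i)/p-1}{m-1}$, viewed as a polynomial, vanishes at the negative integers $(n-i)/p\in\{-1,\dots,-(m-1)\}$. So terms with $i>n$ contribute zero, and the formula is valid for all $n\ge0$.

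The second issue is the exact degree $m-1$. For that I will use partial fractions: the coefficient of $n^{m-1}$ in $q_r$ is a nonzero combination $\sum_{m_j=m}c_j\zeta_j^{-n}$ with $c_j\neq0$. Such a sum cannot vanish for all $n$ in a residue class, by Vandermonde invertibility. For the degree claim I will settle for ``degree $m-1$ in at least one residue class'', and degree at most $m-1$ in general.
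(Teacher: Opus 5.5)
Your proof is correct, and it takes a genuinely different route from the paper's. The paper splits $P/Q$ into partial fractions $\sum_j\sum_{i\le e_j}c_{j,i}(1-\zeta_jz)^{-i}$ and expands each piece, giving $a_n=\sum_{j,i}c_{j,i}\binom{n+i-1}{n}\zeta_j^{n}$ for large $n$. It then gets the period from the fact that each $\zeta_j^{n}$ depends only on $n$ modulo the order of $\zeta_j$. You never decompose. You note that $Q$ divides $(1-z^p)^m$, so a single expansion of $(1-z^p)^{-m}$ gives period $p$ and degree at most $m-1$. It also gives the explicit formula $a_n=\sum_{i\equiv r}s_i\binom{(n-i)/p+m-1}{m-1}$ with visibly rational coefficients, and needs only root factorization rather than the existence theory of partial fractions.

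Your treatment of small $n$ is sharper than the paper's. When $\deg P<\deg Q$, the binomial polynomial vanishes at $k=-1,\dots,-(m-1)$, so your formula holds for all $n\ge0$; the paper only claims its formula for sufficiently large $n$. You are also right that some such hypothesis is needed: $z+1/(1-z)$ has coefficients $1,2,1,1,\dots$, which is not a quasi-polynomial from $n=0$. What the paper's route buys is that the separate contributions $\zeta_j^{n}\cdot(\text{polynomial of degree }e_j-1)$ are visible, and those are what control the leading coefficient. Indeed, you borrow exactly that picture for the exact-degree step.

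Be precise on that step. Read class by class, your sentence that the sum ``cannot vanish for all $n$ in a residue class'' is false. In class $r$ the leading coefficient is the constant $\frac{1}{(m-1)!}\sum_{m_j=m}c_j\zeta_j^{-r}$, and it can vanish for some $r$: for $1/(1-z^2)$ the odd class is identically $0$. Vandermonde only gives nonvanishing for at least one $r$. That is precisely the conclusion you settle for, and it is more accurate than the paper. The paper's last line upgrades ``degree at most $\max_je_j-1$'' to ``degree $\max_je_j-1$'', even though its definition of quasi-polynomial demands full degree in every class.

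Also, $c_j\neq0$ requires $P/Q$ in lowest terms; for $P=Q=1-z$ the degree claim fails, so reduce the fraction first. You could even stay inside your own framework here. Suppose every class sum $\sum_{i\equiv r}s_i$ vanished. Then $S(\omega)=0$ for all $\omega^p=1$, so $(1-z^p)\mid S$, and every pole of $A$ would have order at most $m-1$, a contradiction.
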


\begin{proof}
Factor $Q(z)=c\prod_{j}(1-\zeta_jz)^{e_j}$ where the $\zeta_j$ are the
distinct roots of unity occurring as poles (i.e.\ $1/\zeta_j$ is a root of
$Q$) with multiplicities $e_j$, and $c$ is a nonzero constant. By the theory
of partial fractions for rational functions, $A(z)=P(z)/Q(z)$ decomposes
(after possibly separating off a polynomial part, absent here for $n$
ranging over all sufficiently large values, since we only need the formula
for $n\ge\deg P-\deg Q$, and both sides are then determined by the
partial-fraction data) as a finite sum
\[
A(z)=\sum_{j}\sum_{i=1}^{e_j}\frac{c_{j,i}}{(1-\zeta_jz)^{i}}
\]
for suitable constants $c_{j,i}$. By Corollary~\ref{cor:multiset-ogf} (with
$z$ replaced by $\zeta_jz$, valid since that corollary's proof is a formal
identity in the ring of power series and substituting $\zeta_jz$ for $z$ is
a ring homomorphism), $1/(1-\zeta_jz)^{i}=\sum_{n\ge0}\binom{n+i-1}{n}
\zeta_j^{n}z^{n}$. Hence
\[
a_n=\sum_j\sum_i c_{j,i}\binom{n+i-1}{n}\zeta_j^{n}
\]
for all sufficiently large $n$. For fixed $j$, $\binom{n+i-1}{n}$ is a
polynomial in $n$ of degree $i-1$ (it equals $\ffall{(n+i-1)}{i-1}/(i-1)!$,
manifestly polynomial in $n$ of degree $i-1$ by
Proposition~\ref{prop:injectionscount}). Also $\zeta_j^{n}$ depends only on
$n\bmod\mathrm{ord}(\zeta_j)$, where $\mathrm{ord}(\zeta_j)$ is the
multiplicative order of the root of unity $\zeta_j$. Let $p=\mathrm{lcm}_j
\,\mathrm{ord}(\zeta_j)$; then each $\zeta_j^{n}$, and hence $a_n$ as a
whole, depends only on $n\bmod p$ combined with a polynomial-in-$n$
coefficient of degree at most $\max_je_j-1$. This is exactly the definition
of a quasi-polynomial of period (dividing) $p$ and degree $\max_je_j-1$.
\end{proof}

\begin{remark}
Proposition~\ref{prop:rationaltoqp} will be applied in
Chapter~\ref{ch:rational} to a generating function whose only pole is at
$z=1$ (a root of unity, of order $1$) together with poles at other roots of
unity, yielding quasi-polynomiality of a two-parameter orbit count in one of
its two parameters.
\end{remark}

\chapter{M\"obius Inversion on the Boolean Lattice}\label{ch:mobius}

\section{The inversion formula}

\begin{proposition}[M\"obius inversion on subsets]\label{prop:mobius}
Let $N$ be a finite set and let $f,g$ be functions assigning a number to
each subset of $N$. If
\begin{equation}\label{eq:mobius-hyp}
g(A)=\sum_{B\subseteq A}f(B)\qquad\text{for every }A\subseteq N,
\end{equation}
then
\begin{equation}\label{eq:mobius-concl}
f(A)=\sum_{B\subseteq A}(-1)^{|A\setminus B|}g(B)\qquad\text{for every
}A\subseteq N.
\end{equation}
Conversely, \eqref{eq:mobius-concl} for all $A$ implies
\eqref{eq:mobius-hyp} for all $A$.
\end{proposition}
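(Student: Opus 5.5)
The plan is a direct substitution followed by an interchange of the order of summation, with the vanishing of an alternating binomial sum (the same computation used in the proof of Proposition~\ref{prop:incexc}) doing the essential work.

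\emph{Forward direction.} Assume \eqref{eq:mobius-hyp} holds for every subset. Fix $A\subseteq N$ and substitute \eqref{eq:mobius-hyp} for each $g(B)$ into the right side of \eqref{eq:mobius-concl}:
\[
\sum_{B\subseteq A}(-1)^{|A\setminus B|}g(B)=\sum_{B\subseteq A}(-1)^{|A\setminus B|}\sum_{C\subseteq B}f(C)=\sum_{C\subseteq A}f(C)\sum_{C\subseteq B\subseteq A}(-1)^{|A\setminus B|}.
\]
The interchange is justified by the addition principle. The double sum runs over the finite set of pairs $(C,B)$ with $C\subseteq B\subseteq A$, and we regroup these pairs by $C$ instead of by $B$.

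The key step is evaluating the inner sum. The map $B\mapsto B\setminus C$ is a bijection from $\{B: C\subseteq B\subseteq A\}$ to the subsets $D$ of $A\setminus C$; its inverse is $D\mapsto D\cup C$. Under this bijection $|A\setminus B|=m-|D|$, where $m=|A\setminus C|$. Grouping by $k=|D|$ and applying the binomial theorem (Proposition~\ref{prop:binomthm}), the inner sum becomes
\[
\sum_{k=0}^{m}\binom mk(-1)^{m-k}=(1-1)^m=0^m,
\]
which equals $1$ if $C=A$ and $0$ otherwise. Only the term $C=A$ survives, so the right side of \eqref{eq:mobius-concl} equals $f(A)$.

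\emph{Converse.} The same argument runs in reverse. Assume \eqref{eq:mobius-concl} for every subset and substitute it into $\sum_{B\subseteq A}f(B)$:
\[
\sum_{B\subseteq A}f(B)=\sum_{B\subseteq A}\sum_{C\subseteq B}(-1)^{|B\setminus C|}g(C)=\sum_{C\subseteq A}g(C)\sum_{C\subseteq B\subseteq A}(-1)^{|B\setminus C|}.
\]
The same bijection $B\mapsto B\setminus C$ turns the inner sum into $\sum_{D\subseteq A\setminus C}(-1)^{|D|}=0^{|A\setminus C|}$. Again only $C=A$ contributes, so $\sum_{B\subseteq A}f(B)=g(A)$.

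The only step needing care is the bookkeeping: justifying the interchange of summation and the bijection onto the interval $\{B: C\subseteq B\subseteq A\}$. The sign computation itself is the already-established identity $(1-1)^m=0^m$, so I expect no real obstacle.
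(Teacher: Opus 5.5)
Your proof is correct and follows the paper's argument essentially step for step: you substitute, interchange the order of summation over pairs $C\subseteq B\subseteq A$, biject the interval $\{B: C\subseteq B\subseteq A\}$ with the subsets of $A\setminus C$, and collapse the inner sum via $(1-1)^m=0^m$. Your converse also writes out explicitly the inner sum $\sum_{D\subseteq A\setminus C}(-1)^{|D|}$, which the paper only refers to as the ``identical computation.''
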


\begin{proof}
Assume \eqref{eq:mobius-hyp}. Substitute it into the right side of
\eqref{eq:mobius-concl}:
\[
\sum_{B\subseteq A}(-1)^{|A\setminus B|}g(B)
=\sum_{B\subseteq A}(-1)^{|A\setminus B|}\sum_{C\subseteq B}f(C)
=\sum_{C\subseteq A}f(C)\sum_{C\subseteq B\subseteq A}(-1)^{|A\setminus B|},
\]
where we exchanged the order of summation (both are finite sums over the
same set of pairs $(B,C)$ with $C\subseteq B\subseteq A$, since
$C\subseteq B\subseteq A$ is equivalent to ``$C\subseteq B$ and
$B\subseteq A$''). Fix $C\subseteq A$ and let $D=A\setminus C$, of size
$d=|D|$; then subsets $B$ with $C\subseteq B\subseteq A$ correspond bijectively
to subsets $E=B\setminus C\subseteq D$ (the correspondence $B\mapsto
B\setminus C$ has inverse $E\mapsto C\cup E$, both well defined because
$C\subseteq B\subseteq A=C\cup D$ forces $B\setminus C\subseteq D$ and
conversely), and $|A\setminus B|=|D\setminus E|=d-|E|$. So the inner sum
becomes
\[
\sum_{E\subseteq D}(-1)^{d-|E|}
=\sum_{k=0}^{d}\binom dk(-1)^{d-k}
=(1-1)^{d}=0^{d}
\]
by the binomial theorem (Proposition~\ref{prop:binomthm}, with $u=-1,v=1$,
grouping subsets $E$ by size $k=|E|$, of which there are $\binom dk$). This
equals $1$ if $d=0$ (i.e.\ $C=A$) and $0$ if $d\ge1$ (i.e.\ $C\subsetneq A$).
Hence the outer sum over $C\subseteq A$ collapses to the single term
$C=A$, giving $\sum_{C\subseteq A}f(C)\cdot[C=A]=f(A)$. This proves
\eqref{eq:mobius-concl}.

The converse is proved by the identical computation with the roles of $f,g$
formally exchanged and the sign $(-1)^{|A\setminus B|}$ absorbed: assuming
\eqref{eq:mobius-concl}, substitute it into $\sum_{B\subseteq A}f(B)$ and
use the same collapsing sum $\sum_{C\subseteq B\subseteq A}(-1)^{|B\setminus
C|}=[C=A]$ (identical computation with $A,B$ in place of $B,A$) to recover
$g(A)$.
\end{proof}

\begin{remark}
Proposition~\ref{prop:mobius} is the case of M\"obius inversion appropriate
to the \emph{Boolean lattice} (the poset of subsets of $N$ ordered by
inclusion): the M\"obius function of this poset is $\mu(B,A)=(-1)^{|A
\setminus B|}$ for $B\subseteq A$. We will not need the M\"obius function of
any other poset in this book, so we have proved exactly the instance needed
rather than the general theory.
\end{remark}

\section{A worked example: counting surjections}

As an illustration used repeatedly in Part~I, we re-derive the
inclusion--exclusion formula for surjections (already obtained differently
in Proposition~\ref{prop:stirlingformula}) directly from
Proposition~\ref{prop:mobius}, since this is the pattern of argument used
for coverings and hypergraphs in Chapters~\ref{ch:injective}
and~\ref{ch:surjective}.

\begin{proposition}\label{prop:surjmobius}
Let $N,X$ be finite sets, $|N|=n$. For $A\subseteq N$ let $\phi(A)$ be the
number of functions $X\to A$ (so $\phi(A)=|A|^{x}$ where $x=|X|$), and let
$\sigma(A)$ be the number of \emph{surjective} functions $X\to A$. Then
$\phi(A)=\sum_{B\subseteq A}\sigma(B)$, and consequently
\begin{equation}
\sigma(N)=\sum_{B\subseteq N}(-1)^{|N\setminus B|}|B|^{x}
=\sum_{k=0}^{n}(-1)^{n-k}\binom nk k^{x}.
\end{equation}
\end{proposition}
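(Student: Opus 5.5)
The plan is to prove the identity $\phi(A)=\sum_{B\subseteq A}\sigma(B)$ by classifying functions according to their image, and then to read off the formula for $\sigma(N)$ from M\"obius inversion (Proposition~\ref{prop:mobius}).

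Fix $A\subseteq N$. Every function $f\colon X\to A$ has a well-defined image $f(X)\subseteq A$. Partition the set $A^{X}$ of all functions $X\to A$ according to the value $B=f(X)$, where $B$ ranges over the subsets of $A$. For each fixed $B$ there is a bijection between the class $\{f\colon X\to A : f(X)=B\}$ and the set of surjections $X\to B$. In one direction we restrict the codomain to $B$. In the other we compose with the inclusion $B\hookrightarrow A$. These maps are mutually inverse, and a function with image exactly $B$ is precisely a surjection onto $B$ once its codomain is shrunk. So the class indexed by $B$ has $\sigma(B)$ elements. The addition principle (Proposition~\ref{prop:add}) then gives $\phi(A)=\sum_{B\subseteq A}\sigma(B)$.

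The value $\phi(A)=|A|^{x}$ comes from the multiplication principle: each of the $x$ elements of $X$ independently has $|A|$ possible images. One edge case needs checking. When $X=\varnothing$ we have $\sigma(\varnothing)=1$, since the empty function is a surjection onto $\varnothing$, and $\sigma(B)=0$ for $B\ne\varnothing$. This agrees with the convention $0^{0}=1$, so no case split is required.

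Next, apply Proposition~\ref{prop:mobius} with $g=\phi$ and $f=\sigma$, which is legitimate because the hypothesis \eqref{eq:mobius-hyp} is exactly the identity just proved. The conclusion at $A=N$ reads $\sigma(N)=\sum_{B\subseteq N}(-1)^{|N\setminus B|}|B|^{x}$. To obtain the second form, group the subsets $B$ by their size $k=|B|$. There are $\binom nk$ of them for each $k$, and each contributes $(-1)^{n-k}k^{x}$.

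There is no real obstacle here. The only point needing care is the bijection between functions with a prescribed image $B$ and surjections onto $B$, which is what makes the partition by image give exactly $\sigma(B)$ per class. As a consistency check, the final formula agrees with Proposition~\ref{prop:stirlingformula} after reindexing $j=n-k$.
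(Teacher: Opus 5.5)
Your proposal is correct and follows the paper's proof exactly: you classify functions $X\to A$ by their image to get $\phi(A)=\sum_{B\subseteq A}\sigma(B)$, apply Proposition~\ref{prop:mobius} with $g=\phi$ and $f=\sigma$, and then group by $k=|B|$. Your explicit bijection between functions with image $B$ and surjections onto $B$, and your check of the case $X=\varnothing$, add detail that the paper leaves implicit.
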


\begin{proof}
Every function $f\colon X\to A$ is surjective onto its image
$B:=f(X)\subseteq A$, and this image is unique; so classifying functions
$X\to A$ by their image gives $\phi(A)=\sum_{B\subseteq A}\sigma(B)$ (the
addition principle, partitioning functions $X\to A$ by image). This is
exactly hypothesis \eqref{eq:mobius-hyp} with $f=\sigma$, $g=\phi$, so
Proposition~\ref{prop:mobius} gives
$\sigma(A)=\sum_{B\subseteq A}(-1)^{|A\setminus B|}\phi(B)
=\sum_{B\subseteq A}(-1)^{|A\setminus B|}|B|^{x}$. Taking $A=N$ and grouping
the sum by $k=|B|$ (of which there are $\binom nk$ choices of $B$, all
contributing the same term since $|N\setminus B|=n-k$ and $|B|^x=k^x$
depend only on $k$) gives the stated formula.
\end{proof}

This completes the foundational material. Part~I now applies
Chapters~\ref{ch:sets}--\ref{ch:mobius} to the enumeration of binary
relations.

\part{Enumeration of Binary Relations}

\chapter{Binary Relations, Families of Subsets, and Lattice
Homomorphisms}\label{ch:relations}

Throughout Part~I, $N$ and $X$ denote finite sets, $n=|N|$, $x=|X|$.

\section{Three descriptions of the same object}

\begin{definition}
A \emph{binary relation} between $N$ and $X$ is a subset $R\subseteq
N\times X$. The set of all binary relations between $N$ and $X$ is denoted
$\B(N,X)$.
\end{definition}

\begin{definition}
An \emph{$n$-family of subsets of $X$} (indexed by $N$) is a tuple
$F=(F_i)_{i\in N}$ with each $F_i\subseteq X$. Write $\Sr_n(X)$ (following
the source paper's $\mathcal F_n(X)$, renamed here to avoid clashing with
the surjective-relation symbol $\Sr$ introduced in
Section~\ref{sec:sixtypes}; we in fact use the more explicit notation
$\mathrm{Fam}_n(X)$ from here on) for the set of all $n$-families of subsets
of $X$.
\end{definition}

\begin{definition}
Let $\PS(N)$ denote the power set of $N$ (the Boolean algebra of subsets of
$N$ under $\cup,\cap,{}^{c}$). A \emph{$\cup$-homomorphism} $\phi\colon
\PS(N)\to\PS(X)$ is a function satisfying $\phi(I\cup J)=\phi(I)\cup\phi(J)$
for all $I,J\subseteq N$. Write $\mathrm{Hom}(\PS(N),\PS(X))$ for the set of
all $\cup$-homomorphisms (called simply \emph{homomorphisms} from now on, by
the abuse of language explained in Remark~\ref{rem:capvscup} below).
\end{definition}

\begin{remark}\label{rem:capvscup}
A \emph{$\cap$-homomorphism} satisfies $\phi(I\cap J)=\phi(I)\cap\phi(J)$
instead. If $\phi$ is a $\cup$-homomorphism, then $\psi(I):=\overline{\phi
(\overline I)}$ (where $\overline{\,\cdot\,}$ denotes complement) is a
$\cap$-homomorphism: indeed $\psi(I\cap J)=\overline{\phi(\overline{I\cap
J})}=\overline{\phi(\overline I\cup\overline J)}=\overline{\phi(\overline
I)\cup\phi(\overline J)}=\overline{\phi(\overline I)}\cap\overline{\phi
(\overline J)}=\psi(I)\cap\psi(J)$, using de Morgan's law
$\overline{I\cap J}=\overline I\cup\overline J$ and the defining property of
$\phi$ on the second step, and de Morgan again on the fourth. This
correspondence $\phi\leftrightarrow\psi$ is a bijection between
$\cup$-homomorphisms and $\cap$-homomorphisms (it is an involution: applying
it twice, using $\overline{\overline A}=A$, returns $\phi$). Hence it
suffices to study $\cup$-homomorphisms; we call them simply
\emph{homomorphisms} throughout.
\end{remark}

\begin{theorem}[Three-way bijection]\label{thm:threeway}
There are bijections between $\B(N,X)$, $\mathrm{Fam}_n(X)$, and
$\mathrm{Hom}(\PS(N),\PS(X))$, as follows:
\begin{align}
\mathcal F\colon\B(N,X)&\to\mathrm{Fam}_n(X), &
\mathcal F(R)&=(R(i))_{i\in N},\quad R(i):=\{a\in X:(i,a)\in R\};
\label{eq:defF}\\
\Phi\colon\mathrm{Fam}_n(X)&\to\mathrm{Hom}(\PS(N),\PS(X)), &
\Phi(F)(I)&=\bigcup_{i\in I}F_i\qquad(I\subseteq N).
\label{eq:defPhi}
\end{align}
The maps $\mathcal R:=\mathcal F^{-1}$ and $\Psi:=\Phi^{-1}$, described in
the proof, are mutually inverse to $\mathcal F$ and $\Phi$ respectively.
\end{theorem}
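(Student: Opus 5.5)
I will prove the two bijections separately, each by writing down an explicit candidate inverse and checking both composites are identities. Composing the two then gives the bijection $\B(N,X)\to\mathrm{Hom}(\PS(N),\PS(X))$. This route is cleaner than arguing injectivity and surjectivity directly, and the statement already asks us to exhibit $\mathcal R$ and $\Psi$.

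For $\mathcal F$, define $\mathcal R(F):=\{(i,a)\in N\times X: a\in F_i\}$. Then $\mathcal R(\mathcal F(R))=\{(i,a):a\in R(i)\}=\{(i,a):(i,a)\in R\}=R$, by the definition of $R(i)$. Conversely, the $i$-th component of $\mathcal F(\mathcal R(F))$ is $\{a:(i,a)\in\mathcal R(F)\}=\{a:a\in F_i\}=F_i$. Both checks unwind definitions elementwise, so this step is routine.

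For $\Phi$, I first check that $\Phi(F)$ really is a $\cup$-homomorphism. Since $\bigcup_{i\in I\cup J}F_i=\bigl(\bigcup_{i\in I}F_i\bigr)\cup\bigl(\bigcup_{i\in J}F_i\bigr)$, every element of the left side lies in some $F_i$ with $i\in I$ or $i\in J$, and conversely. Next I define $\Psi(\phi):=(\phi(\{i\}))_{i\in N}$. The composite $\Psi(\Phi(F))_i=\bigcup_{j\in\{i\}}F_j=F_i$ is immediate. The other composite $\Phi(\Psi(\phi))=\phi$ requires $\phi(I)=\bigcup_{i\in I}\phi(\{i\})$ for every $I\subseteq N$. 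This is the only substantive point, and I expect it to be the main obstacle because of the empty set.

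For nonempty $I$, I will argue by induction on $|I|$. The case $|I|=1$ is trivial. For the inductive step, write $I=I'\cup\{i_0\}$ with $I'\neq\varnothing$ and apply the defining identity $\phi(I'\cup\{i_0\})=\phi(I')\cup\phi(\{i_0\})$.

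For $I=\varnothing$, the right side is the empty union, which is $\varnothing$. The defining identity $\phi(\varnothing\cup\varnothing)=\phi(\varnothing)\cup\phi(\varnothing)$ is vacuous and does not force $\phi(\varnothing)=\varnothing$. Indeed, if $N=\varnothing$, the map sending $\varnothing$ to any fixed subset of $X$ is a $\cup$-homomorphism. More generally, $\phi(I):=C\cup\bigcup_{i\in I}F_i$ with $C\ne\varnothing$ satisfies the identity, so as literally stated the class of $\cup$-homomorphisms is too large.

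I therefore plan to make explicit the standard convention that homomorphisms preserve the least element, i.e.\ $\phi(\varnothing)=\varnothing$; this is the join of the empty family, and it is what lattice homomorphisms of Boolean algebras require. With that convention, $\Phi(F)(\varnothing)=\varnothing$ holds automatically, the $I=\varnothing$ case of $\Phi\Psi=\mathrm{id}$ follows, and both bijections are established.
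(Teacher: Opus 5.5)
Your proposal is correct and matches the paper's proof step for step. Both use the same explicit inverses $\mathcal R$ and $\Psi$ and the same induction on nonempty $I$. Both also resolve the empty-set problem the same way, by restricting to $\cup$-homomorphisms with $\phi(\varnothing)=\varnothing$; this is exactly the class the paper ends up calling $\mathrm{Hom}(\PS(N),\PS(X))$. Your $N=\varnothing$ example is a useful addition. Without that convention the case $N=\varnothing$ is not trivial, since every one of the $2^{x}$ maps $\{\varnothing\}\to\PS(X)$ is a $\cup$-homomorphism, and the paper's parenthetical aside glosses over this.
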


\begin{proof}
\emph{$\mathcal F$ is a bijection $\B(N,X)\to\mathrm{Fam}_n(X)$.} Define
$\mathcal R\colon\mathrm{Fam}_n(X)\to\B(N,X)$ by $\mathcal R(F)=\{(i,a)\in
N\times X:a\in F_i\}$. We check $\mathcal R$ and $\mathcal F$ are mutually
inverse. Starting from $R\in\B(N,X)$: $\mathcal R(\mathcal F(R))=\{(i,a):
a\in R(i)\}=\{(i,a):(i,a)\in R\}=R$, directly unwinding the definitions.
Starting from $F\in\mathrm{Fam}_n(X)$: $\mathcal F(\mathcal R(F))_i=
\mathcal R(F)(i)=\{a:(i,a)\in\mathcal R(F)\}=\{a:a\in F_i\}=F_i$ for every
$i$, so $\mathcal F(\mathcal R(F))=F$. Two maps that are mutual two-sided
inverses are both bijections.

\emph{$\Phi$ is a bijection $\mathrm{Fam}_n(X)\to\mathrm{Hom}(\PS(N),
\PS(X))$.} First, $\Phi(F)$ is indeed a homomorphism for every $F$: for
$I,J\subseteq N$, $\Phi(F)(I\cup J)=\bigcup_{i\in I\cup J}F_i
=\bigcup_{i\in I}F_i\cup\bigcup_{i\in J}F_i=\Phi(F)(I)\cup\Phi(F)(J)$
(splitting the union over $I\cup J$ into the union over $I$ and the union
over $J$, valid since every $i\in I\cup J$ lies in $I$ or in $J$, possibly
both). Define $\Psi\colon\mathrm{Hom}(\PS(N),\PS(X))\to\mathrm{Fam}_n(X)$ by
$\Psi(\phi)=(\phi(\{i\}))_{i\in N}$ (the family of singleton-images).

Starting from $F\in\mathrm{Fam}_n(X)$: $\Psi(\Phi(F))_i=\Phi(F)(\{i\})
=\bigcup_{j\in\{i\}}F_j=F_i$, so $\Psi(\Phi(F))=F$.

Starting from $\phi\in\mathrm{Hom}(\PS(N),\PS(X))$: we must show
$\Phi(\Psi(\phi))=\phi$, i.e.\ $\bigcup_{i\in I}\phi(\{i\})=\phi(I)$ for
every $I\subseteq N$. We prove this by induction on $|I|$. If $I=\varnothing$,
the left side is an empty union, i.e.\ $\varnothing$; for the right side,
apply the homomorphism property with $I=J=\varnothing$: $\phi(\varnothing)
=\phi(\varnothing\cup\varnothing)=\phi(\varnothing)\cup\phi(\varnothing)
=\phi(\varnothing)$, which is consistent but does not by itself give
$\phi(\varnothing)=\varnothing$ in general (indeed $\phi(\varnothing)$ need
not be empty for a general $\cup$-homomorphism, e.g.\ the constant map
$\phi\equiv X$ is a $\cup$-homomorphism with $\phi(\varnothing)=X$). To
handle this correctly, we instead prove the claim for $I\neq\varnothing$ by
induction on $|I|\ge1$, and separately note that $\Phi(\Psi(\phi))$ and
$\phi$ automatically agree on $I=\varnothing$ once they agree on singletons
and unions preserve this, as follows: write any nonempty $I=\{i_1,\dots,
i_k\}$ as $I=\{i_1\}\cup\{i_2,\dots,i_k\}$. For $k=1$ the claim
$\bigcup_{i\in I}\phi(\{i\})=\phi(\{i_1\})=\phi(I)$ is trivial. For $k\ge2$,
by the homomorphism property, $\phi(I)=\phi(\{i_1\}\cup\{i_2,\dots,i_k\})
=\phi(\{i_1\})\cup\phi(\{i_2,\dots,i_k\})$, and by the inductive hypothesis
applied to the smaller set $\{i_2,\dots,i_k\}$,
$\phi(\{i_2,\dots,i_k\})=\bigcup_{i=2}^{k}\phi(\{i_i\})$; substituting gives
$\phi(I)=\bigcup_{i=1}^{k}\phi(\{i_i\})$, completing the induction for
nonempty $I$. For $I=\varnothing$: since $N$ itself is nonempty (or, if
$N=\varnothing$, the entire discussion is vacuous with a single relation,
family, and homomorphism, all trivially in bijection), write $N=I\sqcup
\varnothing$ is not needed; instead directly note $\phi(\varnothing)
=\phi(\varnothing\cup\varnothing)$ combined with idempotence
$\phi(\varnothing)\cup\phi(\varnothing)=\phi(\varnothing)$ shows nothing new,
\emph{but} we only need $\Phi(\Psi(\phi))(\varnothing)=\bigcup_{i\in
\varnothing}\phi(\{i\})=\varnothing$ to match $\phi(\varnothing)$; this
need not hold for a general $\cup$-homomorphism, so in fact the bijection
$\Phi$ is between $\mathrm{Fam}_n(X)$ and precisely those homomorphisms
that are additionally \emph{join-unitary}, i.e.\ satisfy $\phi(\varnothing)
=\varnothing$, automatically --- and indeed $\Phi(F)(\varnothing)=
\bigcup_{i\in\varnothing}F_i=\varnothing$ always holds by definition of an
empty union. We therefore restrict, as does the source paper, attention
throughout to $\cup$-homomorphisms with $\phi(\varnothing)=\varnothing$; this
holds automatically for every $\phi$ arising as $\Phi(F)$, and it is exactly
the class for which $\Phi,\Psi$ are mutually inverse, as the induction above
now shows in full (the case $I=\varnothing$ holding by definition of the
empty union on one side and by the standing hypothesis
$\phi(\varnothing)=\varnothing$ on the other). Hence $\Phi$ and $\Psi$ are
mutually inverse bijections between $\mathrm{Fam}_n(X)$ and the set of
$\cup$-homomorphisms with $\phi(\varnothing)=\varnothing$, which we
henceforth simply call $\mathrm{Hom}(\PS(N),\PS(X))$.
\end{proof}

\begin{remark}
Composing $\mathcal F$ and $\Phi$ gives a direct bijection
$\B(N,X)\to\mathrm{Hom}(\PS(N),\PS(X))$, $R\mapsto\bigl(I\mapsto
\bigcup_{i\in I}R(i)\bigr)$. From here on we move freely between the three
descriptions --- a binary relation, a family of subsets, a homomorphism ---
using whichever is most convenient, exactly as in the source paper.
\end{remark}

\section{Six combinatorial properties and their translations}
\label{sec:sixtypes}

\begin{definition}\label{def:sixprops}
Let $F=(F_i)_{i\in N}\in\mathrm{Fam}_n(X)$, with corresponding relation $R$
and homomorphism $\phi$. We say:
\begin{enumerate}
\item $F$ is a \emph{covering} of $X$ if $\bigcup_{i\in N}F_i=X$;
\item $F$ is \emph{disjunctive} if $F_i\cap F_j=\varnothing$ for $i\neq j$;
\item $F$ is \emph{strict} if $F_i\neq\varnothing$ for every $i$;
\item $F$ is \emph{small} if $|F_i|\le1$ for every $i$.
\end{enumerate}
A disjunctive covering is called a \emph{division} of $X$; a covering all of
whose blocks are nonempty is a \emph{hypergraph}; a small family all of
whose blocks are nonempty is a \emph{single-element family}.
\end{definition}

\begin{proposition}[Translation into relations]\label{prop:translation}
With $R=\mathcal R(F)$ as in Theorem~\textup{\ref{thm:threeway}}, the four
properties of Definition~\textup{\ref{def:sixprops}} are equivalent,
respectively, to:
\begin{enumerate}
\item for every $a\in X$ there is \emph{at least one} $i\in N$ with
$(i,a)\in R$ (``$R$ is right surjective'');
\item for every $a\in X$ there is \emph{at most one} $i\in N$ with
$(i,a)\in R$ (``$R$ is right injective'');
\item for every $i\in N$ there is \emph{at least one} $a\in X$ with
$(i,a)\in R$ (``$R$ is left surjective'');
\item for every $i\in N$ there is \emph{at most one} $a\in X$ with
$(i,a)\in R$ (``$R$ is left injective'').
\end{enumerate}
A relation that is injective and surjective on the same side is called
\emph{bijective on that side}; left-bijective relations are precisely
(graphs of) functions $N\to X$.
\end{proposition}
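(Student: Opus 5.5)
The plan is to unwind the definitions through the bijection $\mathcal R$ of Theorem~\ref{thm:threeway}. The one fact carrying the whole argument is
\[
a\in F_i \iff (i,a)\in R ,
\]
which holds for every $i\in N$ and $a\in X$, since $R=\mathcal R(F)=\{(i,a):a\in F_i\}$. Once this is fixed, each of the four properties of Definition~\ref{def:sixprops} should become, term by term, the corresponding statement about $R$.

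For (i), $\bigcup_{i\in N}F_i=X$ means that every $a\in X$ lies in some $F_i$. By the displayed equivalence this says that for every $a$ some $i$ satisfies $(i,a)\in R$, which is right surjectivity. The inclusion $\bigcup F_i\subseteq X$ holds automatically, so only one direction of the set equality needs checking.

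For (ii), $F_i\cap F_j=\varnothing$ for all $i\neq j$ means that no $a$ lies in two distinct $F_i$. This is equivalent to: for each $a$, at most one $i$ has $(i,a)\in R$. I would argue both directions by contraposition. A pair $i\neq j$ with $(i,a),(j,a)\in R$ is the same thing as an element $a\in F_i\cap F_j$.

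For (iii) and (iv), I would note that $F_i=R(i)=\{a:(i,a)\in R\}$. Then $F_i\neq\varnothing$ says exactly that some $a$ has $(i,a)\in R$, and $|F_i|\le1$ says exactly that at most one $a$ has $(i,a)\in R$. Quantifying over all $i\in N$ gives left surjectivity and left injectivity respectively.

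For the final sentence, a left-bijective $R$ has exactly one $a$ with $(i,a)\in R$ for each $i$. Define $f(i)$ to be that $a$, so that $R=\{(i,f(i))\}$ is the graph of $f$. Conversely, the graph of any function $N\to X$ has exactly one pair for each $i$. These two constructions are mutually inverse.

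I do not expect a real obstacle here. The only point needing care is (ii): the defining condition quantifies over pairs of indices, while the relational condition quantifies over elements $a$, so the contrapositive should be written out explicitly rather than asserted.
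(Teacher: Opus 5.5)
Your proposal is correct and follows essentially the same route as the paper: both proofs unwind $R(i)=F_i=\{a:(i,a)\in R\}$, translate each of the four conditions term by term, and identify left-bijective relations with graphs of functions by sending each $i$ to its unique $a$. Your explicit contrapositive for (ii) and the stated mutual inverses in the final clause add care to the paper's wording but do not change the argument.
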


\begin{proof}
By definition $R(i)=F_i=\{a:(i,a)\in R\}$.

(1) $\bigcup_iF_i=X$ means every $a\in X$ lies in some $F_i$, i.e.\ for
every $a$ there is some $i$ with $a\in F_i=R(i)$, i.e.\ $(i,a)\in R$: this is
exactly ``at least one $i$ with $(i,a)\in R$,'' for every $a$.

(2) $F_i\cap F_j=\varnothing$ for $i\neq j$ means no $a$ lies in two
different $F_i$'s, i.e.\ for every $a$, the set $\{i:(i,a)\in R\}$ has at
most one element: this is exactly ``at most one $i$ with $(i,a)\in R$,''
for every $a$.

(3) $F_i\neq\varnothing$ for every $i$ means for every $i$ there is some
$a\in F_i=R(i)$, i.e.\ $(i,a)\in R$: ``at least one $a$ with $(i,a)\in R$,''
for every $i$.

(4) $|F_i|\le1$ for every $i$ means for every $i$, the set $F_i=R(i)$ has at
most one element $a$ with $(i,a)\in R$: ``at most one $a$ with $(i,a)\in R$,''
for every $i$.

For the final claim: a relation $R$ that is left injective (at most one
$a$ per $i$) and left surjective (at least one $a$ per $i$) has, for every
$i\in N$, \emph{exactly one} $a\in X$ with $(i,a)\in R$; this is exactly the
defining property of the graph of a function $N\to X$, $i\mapsto$ (that
unique $a$).
\end{proof}

\begin{notation}
Following the source paper, write $\Ir_r(N,X)$ for the set of right
injective relations (equivalently disjunctive families, equivalently
join-unitary homomorphisms $\phi(\varnothing)=\varnothing$ -- automatic --
together with $\phi(N)$ arbitrary, though we single out the term
``join-unitary'' below); $\Sr_r(N,X)$ for right surjective relations
(coverings); $\Ir_{lr}(N,X)$ for left-and-right injective relations (small
disjunctive families, i.e.\ partial injections, see
Chapter~\ref{ch:injective}); $\Sr_{lr}(N,X)$ for left-and-right surjective
relations (hypergraphs).
\end{notation}

\begin{definition}\label{def:unitary}
A homomorphism $\phi\in\mathrm{Hom}(\PS(N),\PS(X))$ is \emph{join-unitary}
if $\phi(\varnothing)=\varnothing$ (automatic, by
Theorem~\ref{thm:threeway}, for every homomorphism arising from a family);
\emph{meet-unitary} if $\phi(N)=X$; and \emph{unitary} if both hold. (In the
source paper's usage, ``join-unitary'' homomorphisms correspond to
\emph{arbitrary} families/relations --- since $\phi(\varnothing)=
\varnothing$ already holds automatically --- while the extra condition that
distinguishes rows of the main table is meet-unitarity, $\phi(N)=X$, which
by part~(1) of Proposition~\ref{prop:translation} corresponds exactly to
$F$ being a covering.)
\end{definition}

\begin{proposition}[Unitary homomorphisms are Boolean-algebra homomorphisms]
\label{prop:unitarydivisions}
A homomorphism $\phi$ is unitary (both $\phi(\varnothing)=\varnothing$ and
$\phi(N)=X$) if and only if the corresponding family $F$ is a division of
$X$ (disjunctive and covering), if and only if the corresponding relation
$R$ is bijective on the right (right injective and right surjective), if
and only if $\phi$ is simultaneously a $\cup$-homomorphism and a
$\cap$-homomorphism.
\end{proposition}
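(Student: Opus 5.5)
We prove the three equivalences in a chain: (a) $\phi$ unitary $\iff$ (b) $F$ is a division $\iff$ (c) $R$ is right bijective, and then (b) $\iff$ (d) $\phi$ also preserves $\cap$. Throughout, $\phi=\Phi(F)$ with $\phi(I)=\bigcup_{i\in I}F_i$, as in Theorem~\ref{thm:threeway}. We use the standing convention $\phi(\varnothing)=\varnothing$, so unitarity reduces to $\phi(N)=X$.

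\textbf{(b) $\iff$ (c).} This is immediate from Proposition~\ref{prop:translation}. Part~(1) translates covering into right surjectivity, and part~(2) translates disjunctivity into right injectivity.

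\textbf{The role of (a).} We check that $\phi(N)=\bigcup_{i\in N}F_i$, so $\phi(N)=X$ is exactly the covering condition. This is the remark already made in Definition~\ref{def:unitary}. Taken literally, however, unitarity alone does not force disjunctivity. For example, take $N=\{1,2\}$ and $F_1=F_2=X\neq\varnothing$: then $\phi$ is unitary, but $F$ is not a division. So I would first pin down the intended reading, which I expect to be ``$\phi$ is a unitary \emph{Boolean-algebra} homomorphism,'' as the proposition's title indicates. On this reading, (a) means $\phi(\varnothing)=\varnothing$, $\phi(N)=X$, and $\phi$ preserves both $\cup$ and $\cap$. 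The real content is then (b) $\iff$ (d), with unitarity supplying the covering part. I expect this clarification of the definitions to be the main obstacle, rather than any computation.

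\textbf{(d) $\Rightarrow$ disjunctive.} Suppose $\phi$ preserves $\cap$. For $i\neq j$ we have $\{i\}\cap\{j\}=\varnothing$, so $F_i\cap F_j=\phi(\{i\})\cap\phi(\{j\})=\phi(\varnothing)=\varnothing$. Together with $\phi(N)=X$ this makes $F$ a division.

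\textbf{Division $\Rightarrow$ $\cap$-preservation.} Suppose $F$ is a division. Then $\phi(I)\cap\phi(J)=\bigcup_{i\in I,\,j\in J}(F_i\cap F_j)$ by distributivity. By disjunctivity, only the terms with $i=j\in I\cap J$ survive, so the union equals $\bigcup_{i\in I\cap J}F_i=\phi(I\cap J)$. Covering gives $\phi(N)=X$, so $\phi$ is unitary. As a bonus, complements are also preserved: $\phi(\overline I)=\overline{\phi(I)}$, because the $F_i$ partition $X$. This confirms that $\phi$ is a genuine Boolean-algebra homomorphism.
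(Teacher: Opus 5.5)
Your diagnosis of the first clause is correct, and it locates exactly where the paper's own proof breaks. The paper calls ``unitary $\Leftrightarrow$ division'' immediate, but its argument ($\phi(N)=X$ iff $\bigcup_iF_i=X$, with $\phi(\varnothing)=\varnothing$ automatic) only gives unitary $\Leftrightarrow$ covering. In the forward direction of the last equivalence it then writes ``Since $F$ is disjunctive'' for a $\phi$ that was only assumed unitary. Your example $F_1=F_2=X\neq\varnothing$ shows this step cannot be repaired. Otherwise your two computations are exactly the paper's. The first distributes the intersection over the two unions so that only the diagonal terms $i=j\in I\cap J$ survive. The second evaluates $\cap$-preservation on $\{i\},\{j\}$ to get $F_i\cap F_j=\phi(\varnothing)=\varnothing$.

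What you do not flag is that the fourth clause, read literally, fails in the mirror-image way. Being simultaneously a $\cup$- and $\cap$-homomorphism, even with the standing $\phi(\varnothing)=\varnothing$, does not force covering. Take $X\neq\varnothing$ and $F_i=\varnothing$ for every $i$: then $\phi\equiv\varnothing$ preserves $\cup$ and $\cap$, but $F$ is not a division. Your step ``Together with $\phi(N)=X$ this makes $F$ a division'' imports the covering hypothesis from (a), just as the paper does with ``$\phi(N)=X$ (assumed).'' You half-acknowledge this (``with unitarity supplying the covering part''). Still, you announce (b)$\Leftrightarrow$(d), whereas you have only proved (b)$\Leftrightarrow$[(d) and $\phi(N)=X$]; you should say outright that (d) alone is insufficient.

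The cleanest correct formulation splits into two independent equivalences, and your computations already prove both:
\begin{enumerate}
\item $\phi(N)=X$ $\Leftrightarrow$ $F$ covering $\Leftrightarrow$ $R$ right surjective;
\item $\phi$ preserves $\cap$ $\Leftrightarrow$ $F$ disjunctive $\Leftrightarrow$ $R$ right injective.
\end{enumerate}
Your distributivity argument uses only disjunctivity, not covering, so it establishes (ii) on its own. The conjunction of (i) and (ii) is: $F$ is a division $\Leftrightarrow$ $R$ is right bijective $\Leftrightarrow$ $\phi$ is a unitary $\cap$-preserving $\cup$-homomorphism, i.e.\ a Boolean-algebra homomorphism. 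That is your repaired reading of (a), and it is the correct form of the proposition.
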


\begin{proof}
The first equivalence (unitary $\phi$ $\Leftrightarrow$ $F$ a division) is
immediate from Definition~\ref{def:sixprops}, Definition~\ref{def:unitary},
and Proposition~\ref{prop:translation}(1): $\phi(N)=X$ iff $\bigcup_iF_i=X$
iff $F$ is a covering, and $\phi(\varnothing)=\varnothing$ holds
automatically. The second equivalence ($F$ a division $\Leftrightarrow$ $R$
right bijective) is Proposition~\ref{prop:translation}(1)+(2) together: $F$
covering and disjunctive translate to $R$ right surjective and right
injective, i.e.\ right bijective.

For the last equivalence: if $\phi$ is unitary, we show it is also a
$\cap$-homomorphism, i.e.\ $\phi(I\cap J)=\phi(I)\cap\phi(J)$. Since $F$ is
disjunctive, for $i\neq j$ we have $F_i\cap F_j=\varnothing$; hence for
$I,J\subseteq N$,
\[
\phi(I)\cap\phi(J)=\Bigl(\bigcup_{i\in I}F_i\Bigr)\cap\Bigl(\bigcup_{j\in J}
F_j\Bigr)=\bigcup_{i\in I,j\in J}(F_i\cap F_j)
=\bigcup_{i\in I\cap J}F_i=\phi(I\cap J),
\]
where the middle equality distributes intersection over the two unions
(a general set-theoretic identity, provable by checking both sides consist
of exactly the elements lying in some $F_i$ ($i\in I$) and some $F_j$
($j\in J$) simultaneously), and the next equality uses that $F_i\cap F_j=
\varnothing$ whenever $i\neq j$ (so only the diagonal terms $i=j\in I\cap J$
survive), and the last equality is the definition of $\phi$ on $I\cap J$.
Conversely, if $\phi$ is both a $\cup$- and $\cap$-homomorphism, then in
particular for $i\neq j$, taking $I=\{i\},J=\{j\}$ (so $I\cap J=\varnothing$),
$F_i\cap F_j=\phi(\{i\})\cap\phi(\{j\})=\phi(I\cap J)=\phi(\varnothing)
=\varnothing$ (using join-unitarity, part of the standing hypothesis on
$\phi$), so $F$ is disjunctive; and $\phi(N)=X$ (assumed) gives $F$ a
covering by the first equivalence. So $F$ is a division, i.e.\ $\phi$ is
unitary.
\end{proof}

\chapter{Symmetric Group Actions on Relations, Families, and
Homomorphisms}\label{ch:actions}

\section{The actions}

The symmetric groups $S_N$ and $S_X$ (Chapter~\ref{ch:groups}) act on
$\B(N,X)$, $\mathrm{Fam}_n(X)$, and $\mathrm{Hom}(\PS(N),\PS(X))$ by
relabeling indices and/or elements. Precisely, for $\pi\in S_N$,
$\sigma\in S_X$:
\begin{align}
(\pi,\sigma)\cdot R&=\{(\pi(i),\sigma(a)):(i,a)\in R\}, &&R\in\B(N,X);
\label{eq:actionB}\\
(\pi,\sigma)\cdot(F_i)_{i\in N}&=\bigl(\sigma(F_{\pi^{-1}(i)})\bigr)_{i\in N},
&&F\in\mathrm{Fam}_n(X); \label{eq:actionFam}\\
\bigl((\pi,\sigma)\cdot\phi\bigr)(I)&=\sigma\bigl(\phi(\pi^{-1}(I))\bigr),
&&\phi\in\mathrm{Hom}(\PS(N),\PS(X)),\ I\subseteq N. \label{eq:actionHom}
\end{align}
(Here $\sigma(A)=\{\sigma(a):a\in A\}$ for $A\subseteq X$, and similarly
$\pi^{-1}(I)=\{\pi^{-1}(i):i\in I\}$.)

\begin{proposition}\label{prop:actionsaregroupactions}
Each of \eqref{eq:actionB}--\eqref{eq:actionHom} is a genuine action of the
group $S_N\times S_X$ (Definition~\ref{ch:groups}, with the product group
operation $(\pi_1,\sigma_1)(\pi_2,\sigma_2)=(\pi_1\pi_2,\sigma_1\sigma_2)$).
Restricting $\sigma$ (resp.\ $\pi$) to the identity gives the induced
actions of $S_N$ alone (resp.\ $S_X$ alone).
\end{proposition}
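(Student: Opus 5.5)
For each of the three formulas I will check three things directly from the definitions. First, that the action lands in the right set. Second, that the identity $(\mathrm{id}_N,\mathrm{id}_X)$ acts trivially. Third, the compatibility law $(\pi_1,\sigma_1)\cdot((\pi_2,\sigma_2)\cdot s)=(\pi_1\pi_2,\sigma_1\sigma_2)\cdot s$ from the definition of an action in Chapter~\ref{ch:groups}. The only tools are two elementary facts: images under composites satisfy $(\sigma_1\sigma_2)(A)=\sigma_1(\sigma_2(A))$, and inverses reverse order, $(\pi_1\pi_2)^{-1}=\pi_2^{-1}\pi_1^{-1}$.

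\emph{Relations.} The set in \eqref{eq:actionB} is visibly a subset of $N\times X$, and the identity fixes it. For compatibility, $(\pi_1,\sigma_1)\cdot\{(\pi_2(i),\sigma_2(a)):(i,a)\in R\}$ equals $\{(\pi_1\pi_2(i),\sigma_1\sigma_2(a)):(i,a)\in R\}$, which is exactly the required element.

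\emph{Families.} Each $\sigma(F_{\pi^{-1}(i)})$ is a subset of $X$, and the identity is clear. Put $G=(\pi_2,\sigma_2)\cdot F$, so that $G_j=\sigma_2(F_{\pi_2^{-1}(j)})$. Then the $i$-th entry of $(\pi_1,\sigma_1)\cdot G$ is
\[
\sigma_1\bigl(G_{\pi_1^{-1}(i)}\bigr)=\sigma_1\sigma_2\bigl(F_{\pi_2^{-1}\pi_1^{-1}(i)}\bigr)=(\sigma_1\sigma_2)\bigl(F_{(\pi_1\pi_2)^{-1}(i)}\bigr),
\]
as required. This is the one place where the inverse in the index matters: without it the law would come out with the order reversed.

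\emph{Homomorphisms.} This is the main point of care, because I must check that $(\pi,\sigma)\cdot\phi$ is again a $\cup$-homomorphism with $\varnothing\mapsto\varnothing$.

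For preservation of unions I use that both $I\mapsto\pi^{-1}(I)$ and $A\mapsto\sigma(A)$ preserve unions, since images of unions under any map are unions of images. Hence
\[
\sigma\bigl(\phi(\pi^{-1}(I\cup J))\bigr)=\sigma\bigl(\phi(\pi^{-1}I)\cup\phi(\pi^{-1}J)\bigr)=\sigma\bigl(\phi(\pi^{-1}I)\bigr)\cup\sigma\bigl(\phi(\pi^{-1}J)\bigr).
\]
For the empty set, $\pi^{-1}(\varnothing)=\varnothing$, $\phi(\varnothing)=\varnothing$ and $\sigma(\varnothing)=\varnothing$.

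Compatibility is the same computation as for families, with $\pi_2^{-1}(\pi_1^{-1}(I))=(\pi_1\pi_2)^{-1}(I)$.

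As a consistency check, I will also note that the bijections $\mathcal F$ and $\Phi$ of Theorem~\ref{thm:threeway} are equivariant. Indeed, $((\pi,\sigma)\cdot R)(i)=\sigma(R(\pi^{-1}(i)))$, and $\Phi((\pi,\sigma)\cdot F)(I)=\bigcup_{i\in I}\sigma(F_{\pi^{-1}(i)})=\sigma\bigl(\Phi(F)(\pi^{-1}(I))\bigr)$. So the second and third actions are transports of the first, which gives an alternative proof of those two cases.

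Finally, restricting to pairs $(\pi,\mathrm{id}_X)$ or $(\mathrm{id}_N,\sigma)$ gives subgroups isomorphic to $S_N$ and $S_X$. The restriction of an action to a subgroup is again an action, since both axioms are just instances of the axioms for the whole group. The only real obstacle is bookkeeping of inverses, and I will settle it by the explicit index computation above.
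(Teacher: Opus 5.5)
Your proposal is correct and takes essentially the paper's approach: you verify the identity and compatibility axioms directly from the defining formulas. The paper writes this out only for \eqref{eq:actionB} and calls the other two ``entirely similar,'' so you additionally supply what it leaves implicit: the inverse bookkeeping $(\pi_1\pi_2)^{-1}=\pi_2^{-1}\pi_1^{-1}$ for families, and the check that $(\pi,\sigma)\cdot\phi$ is again a $\cup$-homomorphism with $\varnothing\mapsto\varnothing$. Your transport-via-equivariance remark is also a valid alternative; it rests on the equivariance computation the paper proves only afterwards, in Theorem~\ref{thm:prop2}.
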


\begin{proof}
We check \eqref{eq:actionB}; the other two are entirely similar. The
identity $(\mathrm{id},\mathrm{id})$ acts as $(\mathrm{id},\mathrm{id})
\cdot R=\{(\mathrm{id}(i),\mathrm{id}(a)):(i,a)\in R\}=R$. For composition,
$(\pi_1,\sigma_1)\cdot\bigl((\pi_2,\sigma_2)\cdot R\bigr)
=(\pi_1,\sigma_1)\cdot\{(\pi_2(i),\sigma_2(a)):(i,a)\in R\}
=\{(\pi_1(\pi_2(i)),\sigma_1(\sigma_2(a))):(i,a)\in R\}
=\{((\pi_1\pi_2)(i),(\sigma_1\sigma_2)(a)):(i,a)\in R\}
=(\pi_1\pi_2,\sigma_1\sigma_2)\cdot R$, matching the action axiom with
group operation $(\pi_1,\sigma_1)(\pi_2,\sigma_2)=(\pi_1\pi_2,\sigma_1
\sigma_2)$.
\end{proof}

\section{Compatibility of the actions with the three-way bijection}

\begin{theorem}[Proposition 2 of the source paper]\label{thm:prop2}
The bijections $\mathcal F,\Phi$ of Theorem~\textup{\ref{thm:threeway}}
intertwine the actions \eqref{eq:actionB}--\eqref{eq:actionHom}: for every
$R\in\B(N,X)$, $F\in\mathrm{Fam}_n(X)$, and $(\pi,\sigma)\in S_N\times S_X$,
\begin{align}
\mathcal F\bigl((\pi,\sigma)\cdot R\bigr)&=(\pi,\sigma)\cdot\mathcal F(R),
\label{eq:intertwineF}\\
\Phi\bigl((\pi,\sigma)\cdot F\bigr)&=(\pi,\sigma)\cdot\Phi(F).
\label{eq:intertwinePhi}
\end{align}
Consequently the number of orbits of $S_N$, $S_X$, or $S_N\times S_X$ agree
across $\B(N,X)$, $\mathrm{Fam}_n(X)$, and $\mathrm{Hom}(\PS(N),\PS(X))$.
\end{theorem}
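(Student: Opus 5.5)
The plan is to verify the two intertwining identities by unwinding definitions coordinatewise. Then I will deduce the orbit-count statement from the general fact that an equivariant bijection maps orbits onto orbits.

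\emph{Step 1: \eqref{eq:intertwineF}.} Fix $R$ and $(\pi,\sigma)$, and write $R'=(\pi,\sigma)\cdot R$. I compare the $i$-th coordinates of both sides for each $i\in N$. The left side has $i$-th coordinate $R'(i)=\{b\in X:(i,b)\in R'\}$. By \eqref{eq:actionB}, $(i,b)\in R'$ holds iff $i=\pi(j)$ and $b=\sigma(a)$ for some $(j,a)\in R$. Since $\pi$ is a bijection, this forces $j=\pi^{-1}(i)$, so $(i,b)\in R'$ iff $b=\sigma(a)$ with $a\in R(\pi^{-1}(i))$. Hence $R'(i)=\sigma\bigl(R(\pi^{-1}(i))\bigr)$, which by \eqref{eq:actionFam} is the $i$-th coordinate of $(\pi,\sigma)\cdot\mathcal F(R)$.

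\emph{Step 2: \eqref{eq:intertwinePhi}.} Fix $F$ and $I\subseteq N$. Set $G=(\pi,\sigma)\cdot F$, so $G_i=\sigma(F_{\pi^{-1}(i)})$. Then
\[
\Phi(G)(I)=\bigcup_{i\in I}\sigma\bigl(F_{\pi^{-1}(i)}\bigr)=\sigma\Bigl(\bigcup_{j\in\pi^{-1}(I)}F_j\Bigr)=\sigma\bigl(\Phi(F)(\pi^{-1}(I))\bigr).
\]
Two facts are used here. First, taking images under the map $\sigma$ commutes with unions, since $b\in\sigma(\bigcup A_k)$ iff $b=\sigma(a)$ with $a$ in some $A_k$. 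Second, the substitution $j=\pi^{-1}(i)$ is a bijection from $I$ onto $\pi^{-1}(I)$. The final expression is $((\pi,\sigma)\cdot\Phi(F))(I)$ by \eqref{eq:actionHom}. I should also note that $(\pi,\sigma)\cdot\phi$ again satisfies $\phi(\varnothing)=\varnothing$, since $\pi^{-1}(\varnothing)=\varnothing$ and $\sigma(\varnothing)=\varnothing$. This keeps the action \eqref{eq:actionHom} well defined on the restricted set $\mathrm{Hom}(\PS(N),\PS(X))$ adopted in Theorem~\ref{thm:threeway}.

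\emph{Step 3: orbit counts.} Let $\beta\colon S\to T$ be a bijection satisfying $\beta(g\cdot s)=g\cdot\beta(s)$ for all $g$ in a group $G$.

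First, $\beta(\Orb(s))=\{\beta(g\cdot s)\}=\{g\cdot\beta(s)\}=\Orb(\beta(s))$, so $\beta$ sends each orbit onto an orbit.

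Second, $\beta^{-1}$ is also equivariant. Indeed, $\beta(g\cdot\beta^{-1}(t))=g\cdot t$, and applying $\beta^{-1}$ gives $\beta^{-1}(g\cdot t)=g\cdot\beta^{-1}(t)$. So the induced map on orbits has an inverse and is a bijection between the orbit sets. By Proposition~\ref{prop:orbitspartition} these orbit sets are partitions, so their numbers of blocks agree.

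Apply this with $G=S_N\times S_X$, and also with the subgroups $S_N\times\{\mathrm{id}\}$ and $\{\mathrm{id}\}\times S_X$. The intertwining identities restrict to these subgroups automatically. Use $\beta=\mathcal F$, then $\beta=\Phi$, and then their composite.

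There is no serious obstacle. The only point needing care is the bookkeeping of $\pi$ versus $\pi^{-1}$ in Step~1, where the inverse arises from solving $i=\pi(j)$. This must match the $\pi^{-1}$ built into \eqref{eq:actionFam} and \eqref{eq:actionHom}. The secondary care point is the well-definedness of the action on the restricted homomorphism set noted in Step~2.
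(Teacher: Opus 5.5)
Your proposal is correct and follows the paper's proof essentially step for step: the same coordinatewise unwinding of \eqref{eq:actionB} and \eqref{eq:actionFam} giving $R'(i)=\sigma\bigl(R(\pi^{-1}(i))\bigr)$, the same union-commutation and reindexing argument for $\Phi$, and the same ``equivariant bijections carry orbits onto orbits'' argument for the orbit counts. The differences are cosmetic. You get the bijection on orbit sets from the equivariance of $\beta^{-1}$, whereas the paper checks injectivity and surjectivity directly. You also add the remark that the action preserves $\phi(\varnothing)=\varnothing$ on the restricted homomorphism set.
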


\begin{proof}
\eqref{eq:intertwineF}: writing $G=(\pi,\sigma)\cdot R$, we must show
$\mathcal F(G)_i=\bigl((\pi,\sigma)\cdot\mathcal F(R)\bigr)_i$ for every
$i\in N$, i.e.\ (by \eqref{eq:defF} and \eqref{eq:actionFam})
$G(i)=\sigma\bigl(R(\pi^{-1}(i))\bigr)$. Compute directly:
\[
G(i)=\{a\in X:(i,a)\in G\}=\{a:(i,a)\in(\pi,\sigma)\cdot R\}.
\]
By \eqref{eq:actionB}, $(i,a)\in(\pi,\sigma)\cdot R$ iff $(i,a)=
(\pi(j),\sigma(b))$ for some $(j,b)\in R$, iff $j=\pi^{-1}(i)$ and $b=
\sigma^{-1}(a)$ with $(j,b)\in R$, iff $\sigma^{-1}(a)\in R(\pi^{-1}(i))$,
iff $a\in\sigma(R(\pi^{-1}(i)))$ (applying $\sigma$ to both sides of set
membership, using that $\sigma$ is a bijection). So
$G(i)=\sigma(R(\pi^{-1}(i)))$, as required.

\eqref{eq:intertwinePhi}: writing $G=(\pi,\sigma)\cdot F$, we must show
$\Phi(G)(I)=\bigl((\pi,\sigma)\cdot\Phi(F)\bigr)(I)$ for every $I\subseteq
N$, i.e.\ (by \eqref{eq:defPhi} and \eqref{eq:actionHom})
$\bigcup_{i\in I}G_i=\sigma\bigl(\Phi(F)(\pi^{-1}(I))\bigr)
=\sigma\Bigl(\bigcup_{j\in\pi^{-1}(I)}F_j\Bigr)$. By \eqref{eq:actionFam},
$G_i=\sigma(F_{\pi^{-1}(i)})$, so
\[
\bigcup_{i\in I}G_i=\bigcup_{i\in I}\sigma(F_{\pi^{-1}(i)})
=\sigma\Bigl(\bigcup_{i\in I}F_{\pi^{-1}(i)}\Bigr)
=\sigma\Bigl(\bigcup_{j\in\pi^{-1}(I)}F_j\Bigr),
\]
where the second equality uses that $\sigma$ applied to a union of sets
equals the union of $\sigma$ applied to each set (immediate from the
definition $\sigma(A)=\{\sigma(a):a\in A\}$: an element lies in
$\sigma(\bigcup_kA_k)$ iff its $\sigma^{-1}$-image lies in some $A_k$, iff
it lies in some $\sigma(A_k)$), and the third equality reindexes the union
by $j=\pi^{-1}(i)$, a bijection $I\to\pi^{-1}(I)$. This proves
\eqref{eq:intertwinePhi}.

For the final claim: if $\Theta\colon S\to T$ is any bijection intertwining
actions of a group $G$ on sets $S,T$ (meaning $\Theta(g\cdot s)=g\cdot
\Theta(s)$), then $\Theta$ maps orbits of $G$ on $S$ bijectively onto orbits
of $G$ on $T$: indeed $\Theta(\Orb(s))=\{\Theta(g\cdot s):g\in G\}=\{g\cdot
\Theta(s):g\in G\}=\Orb(\Theta(s))$, so $\Theta$ sends the orbit of $s$
exactly onto the orbit of $\Theta(s)$, and since $\Theta$ is a bijection on
the whole set, distinct orbits of $S$ map to distinct orbits of $T$ (if
$\Theta(\Orb(s))=\Theta(\Orb(s'))$ then, $\Theta$ being injective,
$\Orb(s)=\Orb(s')$), and every orbit of $T$ is hit (given orbit $\Orb(t)$,
$t=\Theta(s)$ for some $s$ since $\Theta$ is onto, and then
$\Orb(t)=\Theta(\Orb(s))$). Hence $\Theta$ induces a bijection between the
orbit sets, so the numbers of orbits agree. Applying this to
$\Theta=\mathcal F$ and $\Theta=\Phi\circ\mathcal F$ (both intertwining
bijections, by \eqref{eq:intertwineF}--\eqref{eq:intertwinePhi}) gives the
claim for all three of $S_N$, $S_X$, $S_N\times S_X$ (the argument is
independent of which subgroup of $S_N\times S_X$ one restricts to).
\end{proof}

\begin{corollary}[Reduction to one side]\label{cor:transpose}
The map $R\mapsto R^{-1}:=\{(a,i):(i,a)\in R\}$ is a bijection
$\B(N,X)\to\B(X,N)$ exchanging: all relations with all relations; right
injective with left injective; right surjective with left surjective; and
(dually) exchanging the roles of $S_N$ and $S_X$. Consequently every
left-sided count or orbit-count equals the corresponding right-sided count
with $N$ and $X$ exchanged, and it suffices to prove formulas for
right-injectivity/right-surjectivity, together with their two-sided
combinations.
\end{corollary}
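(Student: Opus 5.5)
The plan is to verify directly that the transpose map $\tau\colon R\mapsto R^{-1}$ has the stated properties, and then to transport counts and orbit counts through it using the intertwining argument from the end of the proof of Theorem~\ref{thm:prop2}.

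\emph{Step 1: $\tau$ is a bijection.} First, $\tau$ is well defined as a map $\B(N,X)\to\B(X,N)$, because $\{(a,i):(i,a)\in R\}\subseteq X\times N$. Applying the same construction in the reverse direction gives $\B(X,N)\to\B(N,X)$. Swapping coordinates twice returns the original pair, so $(R^{-1})^{-1}=R$. Hence the two maps are mutually inverse, and $\tau$ is a bijection. This also gives the ``all relations with all relations'' clause.

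\emph{Step 2: the properties are exchanged.} Here I would use the four conditions of Proposition~\ref{prop:translation}. The key observation is that for $a\in X$,
\[
\{i\in N:(i,a)\in R\}=\{i\in N:(a,i)\in R^{-1}\}.
\]
The left side is the set appearing in the right-sided conditions (i) and (ii) for $R$. The right side is the set appearing in the left-sided conditions (iii) and (iv) for $R^{-1}$, now viewed as a relation between $X$ and $N$, with $X$ playing the role of the left set. It follows that:
\begin{itemize}
\item $R$ is right injective iff $R^{-1}$ is left injective (``at most one'');
\item $R$ is right surjective iff $R^{-1}$ is left surjective (``at least one'').
\end{itemize}
The symmetric identity for $i\in N$ gives the converse exchanges. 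Consequently $\tau$ restricts to bijections between each class on one side and the corresponding class on the other side, including two-sided combinations such as left-and-right injective.

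\emph{Step 3: the group actions are exchanged.} I would identify $S_N\times S_X$ with $S_X\times S_N$ via $(\pi,\sigma)\mapsto(\sigma,\pi)$ and check from \eqref{eq:actionB} that
\[
\bigl((\pi,\sigma)\cdot R\bigr)^{-1}=\{(\sigma(a),\pi(i)):(a,i)\in R^{-1}\}=(\sigma,\pi)\cdot R^{-1}.
\]
So $\tau$ is an intertwining bijection once the group is relabeled. In particular, the $S_N$-action on $\B(N,X)$ corresponds to the action of the second factor on $\B(X,N)$, which is the ``$S_N$ acting on the right set'' action. The same holds with $S_X$ in place of $S_N$.

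\emph{Step 4: transporting counts.} The orbit-bijection argument at the end of the proof of Theorem~\ref{thm:prop2} applies to any intertwining bijection. I would apply it to $\tau$ restricted to each invariant subclass; these subclasses are invariant because relabeling preserves the four properties. The result is that every left-sided raw count or orbit count equals the right-sided count with $N$ and $X$ (and $n$ and $x$) swapped.

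I expect no real obstacle in this argument. The only point needing care is the bookkeeping in Step 3. The group must be relabeled by swapping factors, so that ``$S_N$-orbits of left-injective relations in $\B(N,X)$'' is correctly matched with ``$S_N$-orbits of right-injective relations in $\B(X,N)$'', where $S_N$ now acts on the second coordinate.
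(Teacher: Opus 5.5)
Your proposal is correct and follows the paper's proof essentially step for step: the involution $(R^{-1})^{-1}=R$, the translation of right injectivity/surjectivity of $R$ into left injectivity/surjectivity of $R^{-1}$ via Proposition~\ref{prop:translation}, and the identity $\bigl((\pi,\sigma)\cdot R\bigr)^{-1}=(\sigma,\pi)\cdot R^{-1}$. Your Step~4 applies the orbit-bijection argument from the proof of Theorem~\ref{thm:prop2} to each invariant subclass, and in doing so it only spells out the ``consequently'' that the paper leaves implicit.
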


\begin{proof}
$R\mapsto R^{-1}$ is a bijection with inverse $R'\mapsto(R')^{-1}$ (applying
the transpose twice returns the original relation, since
$(R^{-1})^{-1}=\{(i,a):(a,i)\in R^{-1}\}=\{(i,a):(i,a)\in R\}=R$, unwinding
the definition twice). By Proposition~\ref{prop:translation}, $R$ is right
injective (at most one $i$ per $a$) iff $R^{-1}$ is left injective (at most
one $a'=i$ per $i'=a$, reading the roles of the two coordinates swapped),
and similarly right surjective corresponds to left surjective. Finally,
transposing relates the actions of $S_N,S_X$ on $\B(N,X)$ to the actions of
$S_X,S_N$ on $\B(X,N)$ with the roles exchanged, since $(\pi,\sigma)\cdot R$
transposes to $\{(\sigma(a),\pi(i)):(i,a)\in R\}=(\sigma,\pi)\cdot R^{-1}$,
directly from \eqref{eq:actionB} applied to $R^{-1}$ with the roles of
$\pi,\sigma$ swapped.
\end{proof}

\begin{remark}
There is also a ``dual'' family bijection $F=(F_i)_{i\in N}\mapsto
F^{*}=(F_a^{*})_{a\in X}$, $F_a^{*}=\{i\in N:a\in F_i\}$, between
$n$-families of $X$ and $x$-families of $N$; it corresponds to
$\mathcal R,\mathcal F$-transporting the transpose $R\mapsto R^{-1}$ of
Corollary~\ref{cor:transpose} through Theorem~\ref{thm:threeway}, and it
carries disjunctive $n$-families of $X$ to small $x$-families of $N$, and
$n$-coverings of $X$ to strict $x$-families of $N$ (immediate from
Proposition~\ref{prop:translation} applied on both sides of the transpose).
\end{remark}

\chapter{Raw Enumeration of the Six Types}\label{ch:raw}

This chapter counts each type of relation/family/homomorphism of
Chapter~\ref{ch:relations} \emph{without} regard to symmetry (i.e.\ before
quotienting by any group action). Recall $n=|N|$, $x=|X|$.

\begin{theorem}[Raw counts]\label{thm:rawcounts}
\begin{align}
|\B(N,X)|&=2^{nx}; \label{eq:rawB}\\
|\Ir_r(N,X)|&=(1+n)^{x}; \label{eq:rawIr}\\
|\Sr_r(N,X)|&=(2^{n}-1)^{x}; \label{eq:rawSr}\\
|\Ir_{lr}(N,X)|&=\sum_{k=0}^{\min\{n,x\}}\binom nk\binom xk k!
=\sum_{k=0}^{\min\{n,x\}}\binom nk\,\ffall xk; \label{eq:rawIlr}\\
|\Sr_{lr}(N,X)|&=\sum_{k=0}^{n}(-1)^{k}\binom nk(2^{n-k}-1)^{x}.
\label{eq:rawSlr}
\end{align}
By the transpose duality of Corollary~\textup{\ref{cor:transpose}}, also
$|\Ir_l(N,X)|=(1+x)^{n}$ and $|\Sr_l(N,X)|=(2^{x}-1)^{n}$.
\end{theorem}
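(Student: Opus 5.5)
The plan is to prove each of the five formulas separately. Throughout I use the column decomposition: a relation $R\subseteq N\times X$ is the same as a choice, for each $a\in X$, of its column $R^{-1}(a):=\{i\in N:(i,a)\in R\}\subseteq N$. These choices are independent, so the multiplication principle (Proposition~\ref{prop:mult}) applies whenever a condition restricts each column separately.

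For \eqref{eq:rawB}, no condition is imposed. Either each of the $nx$ pairs of $N\times X$ is independently in or out of $R$, or each of the $x$ columns is an arbitrary subset of $N$; both give $2^{nx}$.

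For \eqref{eq:rawIr}, Proposition~\ref{prop:translation}(2) says right injectivity means every column has at most one element. Each column is then $\varnothing$ or one of the $n$ singletons, giving $1+n$ choices per column and $(1+n)^x$ in total.

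For \eqref{eq:rawSr}, Proposition~\ref{prop:translation}(1) says right surjectivity means every column is nonempty. That leaves $2^n-1$ choices per column and $(2^n-1)^x$ in total.

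For \eqref{eq:rawIlr}, a relation that is injective on both sides is a partial injection. Its domain is $D=\{i:R(i)\neq\varnothing\}$, and $R$ is the graph of an injective function $D\to X$. I classify these relations by $k=|D|$. There are $\binom nk$ choices of $D$ (Proposition~\ref{prop:binomformula}) and $\ffall xk$ injections $D\to X$ (Proposition~\ref{prop:injectionscount}). Using $\ffall xk=\binom xk k!$ and summing over $k$ with the addition principle gives the stated sum; terms with $k>x$ vanish, so the sum runs to $\min\{n,x\}$. The one thing to verify is that the correspondence is bijective. Left injectivity makes $R$ a function on $D$, right injectivity makes that function injective, and the converse direction is immediate.

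For \eqref{eq:rawSlr}, which I expect to be the only step needing real work, I use Möbius inversion (Proposition~\ref{prop:mobius}) in the same pattern as Proposition~\ref{prop:surjmobius}. For $A\subseteq N$, let $g(A)$ be the number of right surjective relations whose left support $\{i:R(i)\neq\varnothing\}$ is contained in $A$. Such relations are exactly the right surjective relations in $A\times X$, so $g(A)=(2^{|A|}-1)^x$ by the previous count. Let $f(B)$ be the number whose left support is exactly $B$. Partitioning by support gives $g(A)=\sum_{B\subseteq A}f(B)$. The hypergraphs counted by $|\Sr_{lr}(N,X)|$ are those with support exactly $N$, so inversion yields $f(N)=\sum_{B\subseteq N}(-1)^{n-|B|}(2^{|B|}-1)^x$. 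Grouping by $|B|=n-k$ and using $\binom{n}{n-k}=\binom nk$ gives \eqref{eq:rawSlr}. The same result follows from Proposition~\ref{prop:incexc} with $A_i$ the relations whose $i$-th row is empty.

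Finally, the two left-sided formulas follow by applying Corollary~\ref{cor:transpose} to \eqref{eq:rawIr} and \eqref{eq:rawSr} with $n$ and $x$ exchanged.
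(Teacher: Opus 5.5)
Your proposal is correct and is essentially the paper's proof. Your column-by-column counts are the paper's bijections with functions $X\to N\cup\{\ast\}$ and $X\to\PS(N)\setminus\{\varnothing\}$; the partial-injection count differs only cosmetically (you choose an injection $D\to X$, where the paper chooses the image $B$ and a bijection $D\to B$ separately); and the hypergraph count is the same M\"obius inversion over the left support. Defining $g(A)$ for every $A\subseteq N$ at once is, if anything, slightly cleaner, since it gives the hypothesis of Proposition~\ref{prop:mobius} for all $A$ explicitly, whereas the paper writes it out only for $A=N$.
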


\begin{proof}
\eqref{eq:rawB}: A relation $R\subseteq N\times X$ is exactly a subset of
the $nx$-element set $N\times X$ (by the multiplication principle, $|N\times
X|=nx$: each of the $n$ elements of $N$ pairs with each of the $x$ elements
of $X$). The number of subsets of an $m$-element set is $2^{m}$, since by
the binomial theorem (Proposition~\ref{prop:binomthm}, with $u=v=1$) the
total number of subsets, grouped by size $k=0,\dots,m$, is $\sum_k\binom
mk=\sum_k\binom mk1^{k}1^{m-k}=(1+1)^{m}=2^{m}$. With $m=nx$ this gives
$|\B(N,X)|=2^{nx}$.

\eqref{eq:rawIr}: By Proposition~\ref{prop:translation}(2), $R$ is right
injective iff for every $a\in X$ there is at most one $i\in N$ with
$(i,a)\in R$; equivalently, the assignment $a\mapsto\{i:(i,a)\in R\}$ takes
values that are either empty or single elements of $N$, i.e.\ $R$ is right
injective iff $R^{-1}$ (Corollary~\ref{cor:transpose}) is (the graph of) a
function $X\to N\cup\{\ast\}$ for a new symbol $\ast$ standing for ``no
$i$'' -- precisely, define $g\colon X\to N\cup\{\ast\}$ by $g(a)=$ the
unique $i$ with $(i,a)\in R$ if one exists, and $g(a)=\ast$ otherwise; this
is well defined exactly because $R$ is right injective (at most one such
$i$), and conversely every such $g$ determines a right injective $R$ by
$R=\{(g(a),a):g(a)\neq\ast\}$, and these two constructions are mutually
inverse (immediate unwinding of the definitions). By
Proposition~\ref{prop:mult} (or Proposition~\ref{prop:bijcard}'s
counting-functions argument), the number of functions $X\to N\cup\{\ast\}$
(a set of size $n+1$) is $(n+1)^{x}$, since each of the $x$ elements of $X$
independently has $n+1$ possible images.

\eqref{eq:rawSr}: By Proposition~\ref{prop:translation}(1), $R$ is right
surjective iff for every $a\in X$ there is at least one $i$ with $(i,a)\in
R$, i.e.\ iff $R^{-1}(a):=\{i:(i,a)\in R\}$ is a \emph{nonempty} subset of
$N$ for every $a$. This sets up a bijection between right surjective $R$
and functions $X\to\PS(N)\setminus\{\varnothing\}$ (send $a\mapsto
R^{-1}(a)$; conversely any such function $h$ gives $R=\{(i,a):i\in h(a)\}$,
and these are mutually inverse). Since $|\PS(N)\setminus\{\varnothing\}|
=2^{n}-1$ (by \eqref{eq:rawB}'s count of subsets, minus the single empty
one), the number of such functions is $(2^{n}-1)^{x}$.

\eqref{eq:rawIlr}: By Proposition~\ref{prop:translation}(2)+(4), $R$ is left
and right injective iff for every $a\in X$ at most one $i$ has $(i,a)\in R$,
\emph{and} for every $i\in N$ at most one $a$ has $(i,a)\in R$; i.e.\ $R$ is
the graph of a bijection between a subset $A\subseteq N$ and a subset
$B\subseteq X$ of the same size (a \emph{partial injection}). We build such
an $R$ in three independent stages: choose $k=|A|=|B|$ ($0\le k\le
\min\{n,x\}$), choose $A\subseteq N$ with $|A|=k$ ($\binom nk$ ways, by
definition of the binomial coefficient), choose $B\subseteq X$ with $|B|=k$
($\binom xk$ ways), and choose a bijection $A\to B$ ($k!$ ways, by
Proposition~\ref{prop:bijcard}). By the multiplication principle the number
with a given $k$ is $\binom nk\binom xk k!$, and summing over $k$ (addition
principle, the cases being disjoint since $k=|A|$ is determined by $R$)
gives \eqref{eq:rawIlr}; the second displayed form follows from
$\binom xk k!=\ffall xk$ (Proposition~\ref{prop:injectionscount}).

\eqref{eq:rawSlr}: By Proposition~\ref{prop:translation}(1)+(3), $R$ is left
and right surjective iff $F=\mathcal F(R)$ is a hypergraph (a covering all
of whose blocks $F_i$ are nonempty). We compute this by M\"obius inversion
on the Boolean lattice of subsets of $N$ (Chapter~\ref{ch:mobius}), exactly
as in Proposition~\ref{prop:surjmobius} but with the roles of $N$ and $X$
reorganized: for $A\subseteq N$, let $\rho(A)$ be the number of right
surjective relations between $A$ and $X$ (a covering by the blocks
$(F_i)_{i\in A}$), and let $\tau(A)$ be the number of hypergraphs on $A$
(coverings by nonempty blocks $(F_i)_{i\in A}$). Every covering $F=(F_i)_{i
\in N}$ of $X$ restricts, on the subset $A=\{i\in N:F_i\neq\varnothing\}$ of
indices with nonempty block, to a hypergraph on $A$ (still covering $X$,
since discarding empty blocks does not change the union), and this
restriction is a bijection between coverings of $X$ by $N$ with ``support''
exactly $A$ and hypergraphs on $A$ covering $X$; summing over which subset
$A$ occurs as the support gives $\rho(N)=\sum_{A\subseteq N}\tau(A)$, which
is hypothesis \eqref{eq:mobius-hyp} of Proposition~\ref{prop:mobius} with
$g=\rho,f=\tau$ (relabeling so that $\rho,\tau$ are indexed by which subset
of $N$ is ``active,'' matching the abstract statement with $N$ playing the
role of the ambient set). Hence
\[
\tau(N)=\sum_{A\subseteq N}(-1)^{|N\setminus A|}\rho(A)
=\sum_{k=0}^{n}(-1)^{n-k}\binom nk\rho(A_k)
\]
for any fixed $A_k\subseteq N$ of size $k$ (grouping by $|A|=k$, of which
there are $\binom nk$ choices, all giving the same $\rho(A)=(2^{k}-1)^{x}$ by
\eqref{eq:rawSr} applied with $A$ in place of $N$, since $\rho(A)$ counts
right surjective relations between $A$ and $X$). Substituting and
reindexing $k\mapsto n-k$ gives
$\tau(N)=\sum_k(-1)^{k}\binom nk(2^{n-k}-1)^{x}$, which is
\eqref{eq:rawSlr} (using $\binom{n}{n-k}=\binom nk$).

The final transpose-dual formulas follow from Corollary~\ref{cor:transpose}
applied to \eqref{eq:rawIr} and \eqref{eq:rawSr}: left injective relations
between $N,X$ biject (via transpose) with right injective relations between
$X,N$, of which there are $(1+x)^{n}$ by \eqref{eq:rawIr} with $N,X$
exchanged; similarly for left surjective.
\end{proof}

\chapter{Orbit Counting via the Cauchy--Frobenius--Burnside Lemma}
\label{ch:burnsideapplication}

We now count orbits of $S_N$, $S_X$, and $S_N\times S_X$ on each of the sets
of Chapter~\ref{ch:raw}, by applying Theorem~\ref{thm:burnside}. Recall from
Proposition~\ref{prop:numperms-cycletype} that summing over all $\pi\in S_n$
weighted uniformly ($1/n!$) can be reorganized as a sum over cycle types
$(1^{a_1}\cdots n^{a_n})$ weighted by the number
$n!/\prod_dd^{a_d}a_d!$ of permutations of that type, divided again by
$n!$; the two factorials cancel, leaving a sum over cycle types weighted by
$1/\prod_dd^{a_d}a_d!$.

\section{Fixed points of a permutation}

\begin{lemma}[Fixed points, all five types]\label{lem:fixedpoints}
Let $\pi\in S_N$ have cycle type $(1^{a_1}\cdots n^{a_n})$ and
$\sigma\in S_X$ have cycle type $(1^{b_1}\cdots x^{b_x})$. Then:
\begin{align}
|\Fix_\B(\pi)|&=2^{(a_1+\cdots+a_n)x}; \label{eq:fixBpi}\\
|\Fix_\B(\sigma)|&=2^{n(b_1+\cdots+b_x)}; \label{eq:fixBsig}\\
|\Fix_\B(\pi,\sigma)|&=2^{\sum_{i,j}a_ib_j\gcd(i,j)}; \label{eq:fixBpisig}\\
|\Fix_{\Ir_r}(\pi)|&=(1+a_1)^{x}; \label{eq:fixIrpi}\\
|\Fix_{\Ir_r}(\sigma)|&=(1+n)^{b_1+\cdots+b_x}; \label{eq:fixIrsig}\\
|\Fix_{\Ir_r}(\pi,\sigma)|&=\prod_{k=1}^{x}\Bigl(1+\sum_{d\mid k}da_d
\Bigr)^{b_k}; \label{eq:fixIrpisig}\\
|\Fix_{\Sr_r}(\pi)|&=(2^{a_1+\cdots+a_n}-1)^{x}; \label{eq:fixSrpi}\\
|\Fix_{\Sr_r}(\sigma)|&=(2^{n}-1)^{b_1+\cdots+b_x}; \label{eq:fixSrsig}\\
|\Fix_{\Ir_{lr}}(\pi)|&=\sum_{k=0}^{\min\{a_1,x\}}\binom{a_1}{k}\ffall xk;
\label{eq:fixIlrpi}\\
|\Fix_{\Ir_{lr}}(\sigma)|&=\sum_{k=0}^{\min\{n,b_1\}}\binom{b_1}{k}\ffall
nk; \label{eq:fixIlrsig}\\
|\Fix_{\Ir_{lr}}(\pi,\sigma)|&=\prod_{k=0}^{\min\{n,x\}}\Bigl(\sum_{i=0}^{
\min\{a_k,b_k\}}\binom{a_k}{i}\binom{b_k}{i}i!\,k^{i}\Bigr);
\label{eq:fixIlrpisig}\\
|\Fix_{\Sr_{lr}}(\pi)|&=\sum_{k=0}^{r}(-1)^{k}\binom rk(2^{r-k}-1)^{x},
\quad r=a_1+\cdots+a_n; \label{eq:fixSlrpi}\\
|\Fix_{\Sr_{lr}}(\sigma)|&=\sum_{k=0}^{n}(-1)^{k}\binom nk(2^{n-k}-1)^{s},
\quad s=b_1+\cdots+b_x. \label{eq:fixSlrsig}
\end{align}
\end{lemma}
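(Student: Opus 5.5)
The uniform strategy is to show that a fixed point of a group element $g$ (namely $\pi$, $\sigma$, or $(\pi,\sigma)$) is the same thing as an object of the same type built on a smaller ground set, or as an equivariant map, and then count with Theorem~\ref{thm:rawcounts}. For the three $\B$-formulas I would use the action \eqref{eq:actionB}. Then $R$ is fixed by $g$ if and only if $R$ is a union of orbits of the cyclic group $\langle g\rangle$ acting on $N\times X$, so $|\Fix_\B(g)|=2^{(\text{number of such orbits})}$.

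Counting these orbits goes as follows. Take a $\pi$-cycle $C$ of length $i$ and a $\sigma$-cycle $D$ of length $j$. The element $(\pi,\sigma)$ permutes the $ij$ points of $C\times D$, and every point has orbit length $\mathrm{lcm}(i,j)$, because $(\pi^m(c),\sigma^m(d))=(c,d)$ holds iff $i\mid m$ and $j\mid m$. So $C\times D$ splits into $ij/\mathrm{lcm}(i,j)=\gcd(i,j)$ orbits. By Proposition~\ref{prop:cyclepartition} the sets $C\times D$ partition $N\times X$, and the addition principle gives \eqref{eq:fixBpisig}. Taking $\sigma=\mathrm{id}$ (all $b_1=x$) gives \eqref{eq:fixBpi}, and symmetrically \eqref{eq:fixBsig}.

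For $\Ir_r$ and $\Sr_r$ I would pass through the bijections used in the proof of Theorem~\ref{thm:rawcounts}. These send $R$ to a map $g\colon X\to N\cup\{\ast\}$, respectively to a map $h\colon X\to\PS(N)\setminus\{\varnothing\}$. A direct unwinding of \eqref{eq:actionB} shows that $R$ is fixed by $(\pi,\sigma)$ iff the map is equivariant: $g(\sigma(a))=\pi(g(a))$ with $\pi(\ast)=\ast$, respectively $h(\sigma(a))=\pi(h(a))$.

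An equivariant map is determined freely on one representative $a$ of each $\sigma$-cycle, say of length $k$. The only constraint is that the value of $a$ must be fixed by $\pi^k$. For $g$, the admissible values are $\ast$ and the points of $N$ whose $\pi$-cycle length divides $k$. That gives $1+\sum_{d\mid k}da_d$ choices per cycle, hence \eqref{eq:fixIrpisig}. Specializing, $\sigma=\mathrm{id}$ gives \eqref{eq:fixIrpi}, since only $d=1$ survives, and $\pi=\mathrm{id}$ gives \eqref{eq:fixIrsig}.

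For $h$ with $\sigma=\mathrm{id}$, every value must be a nonempty $\pi$-invariant subset, that is, a nonempty union of $\pi$-cycles. There are $2^{a_1+\cdots+a_n}-1$ of these, which gives \eqref{eq:fixSrpi}. With $\pi=\mathrm{id}$, $h$ is constant on $\sigma$-cycles, which gives \eqref{eq:fixSrsig}.

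For $\Sr_{lr}$ I would avoid redoing M\"obius inversion. If $\pi$ fixes a family $F$, then $F_{\pi(i)}=F_i$, so $F$ is the same as a family indexed by the $r$ cycles of $\pi$. It is strict and covering iff the reduced family is. Hence \eqref{eq:rawSlr} with $n$ replaced by $r$ gives \eqref{eq:fixSlrpi}. If $\sigma$ fixes $F$, each $F_i$ is $\sigma$-invariant, i.e.\ a union of $\sigma$-cycles. So $F$ is the same as an $n$-family of subsets of the $s$-element set of $\sigma$-cycles, again strict and covering iff the original is. Then \eqref{eq:rawSlr} with $x$ replaced by $s$ gives \eqref{eq:fixSlrsig}.

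The main obstacle is \eqref{eq:fixIlrpisig}. There $R$ is the graph of a partial bijection $f\colon A\to B$ with $A\subseteq N$ and $B\subseteq X$. Fixedness means $A$ is $\pi$-invariant, $B$ is $\sigma$-invariant, and $f\pi=\sigma f$ on $A$. I would argue three points. First, $f$ maps each $\pi$-cycle in $A$ bijectively onto a single $\sigma$-cycle, which must have the same length $k$, because $f$ is injective and equivariant. Second, on a matched pair of $k$-cycles, an equivariant bijection is fixed by the image of one point, so there are exactly $k$ of them. Third, the data splits independently over lengths $k$. For each $k$ one chooses $i$ of the $a_k$ $\pi$-cycles and $i$ of the $b_k$ $\sigma$-cycles, matches them in $i!$ ways, and aligns each pair in $k$ ways, giving the factor $\binom{a_k}{i}\binom{b_k}{i}i!\,k^i$. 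The multiplication principle then yields the product; the $k=0$ factor is vacuously $1$.

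The special cases \eqref{eq:fixIlrpi} and \eqref{eq:fixIlrsig} follow the same way, or directly. With $\sigma=\mathrm{id}$, injectivity of $f$ together with $f(\pi(i))=f(i)$ forces $A$ to consist of fixed points of $\pi$. One then chooses $k$ such points and an injection of them into $X$, counted by $\ffall xk$ via Proposition~\ref{prop:injectionscount}.
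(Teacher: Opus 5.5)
Your proof is correct and takes essentially the paper's route: fixed relations are unions of $\langle(\pi,\sigma)\rangle$-orbits on $N\times X$, which gives the $\gcd(i,j)$ count, and each injective or surjective type reduces either to the same kind of object on the set of cycles or to length-preserving cycle matchings with $k$ phases, counted by Theorem~\ref{thm:rawcounts}. Your equivariance condition $g(\sigma(a))=\pi(g(a))$, where the value on a cycle representative is constrained only by $\pi^k$-invariance, is a cleaner version of the paper's informal ``phasing'' argument for \eqref{eq:fixIrpisig}; and your direct reduction to the $s$-element set of $\sigma$-cycles gives \eqref{eq:fixSlrsig} in exactly the stated form, whereas the paper's transpose-duality route most directly gives $\sum_{k=0}^{s}(-1)^{k}\binom sk(2^{s-k}-1)^{n}$, which agrees with it only through the $n\leftrightarrow x$ symmetry of \eqref{eq:rawSlr}.
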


\begin{proof}
\eqref{eq:fixBpi}--\eqref{eq:fixBsig}: A relation $R\subseteq N\times X$ is
fixed by $(\pi,\mathrm{id})$ iff $(i,a)\in R\Leftrightarrow(\pi(i),a)\in R$
for all $(i,a)$, i.e.\ iff membership of $(i,a)$ in $R$ depends only on the
cycle of $\pi$ containing $i$ (not on $i$ itself within that cycle): indeed
if $(i,a)\in R$ then applying the fixing condition repeatedly gives
$(\pi^{k}(i),a)\in R$ for every $k$, i.e.\ every element of $i$'s cycle
pairs with $a$ in $R$ whenever $i$ does; conversely if $R$ is constant on
each cycle-times-$\{a\}$ slice it is manifestly fixed. So a fixed relation
is equivalent to an arbitrary relation between the set of \emph{cycles} of
$\pi$ (there are $a_1+\cdots+a_n$ of them, by definition of cycle type) and
$X$; by \eqref{eq:rawB} (with $N$ replaced by the cycle set) there are
$2^{(a_1+\cdots+a_n)x}$ of these. This proves \eqref{eq:fixBpi};
\eqref{eq:fixBsig} is identical with the roles of $N,X$ (and $\pi,\sigma$)
exchanged.

\eqref{eq:fixBpisig}: $R$ is fixed by $(\pi,\sigma)$ iff, arguing as above,
$(i,a)\in R\Leftrightarrow(\pi(i),\sigma(a))\in R$ for all $(i,a)$; iterating,
membership of $(i,a)$ propagates around the orbit of $(i,a)$ under the
cyclic group generated by $(\pi,\sigma)$ acting coordinatewise on
$N\times X$, and $R$ is fixed iff it is a union of whole such orbits. A
$\pi$-cycle of length $p$ paired with a $\sigma$-cycle of length $q$
decomposes, under the joint action of $(\pi,\sigma)$, into exactly
$\gcd(p,q)$ orbits each of length $\mathrm{lcm}(p,q)$: this is a standard
fact about the diagonal action of $\Z/p\Z\times$ a single generator on a
product of cycles, provable directly: label the $p\times q$ elements of the
product cycle by pairs $(u,v)\in\Z/p\Z\times\Z/q\Z$; the action sends
$(u,v)\mapsto(u+1,v+1)$ (mod $p$, mod $q$ respectively); the orbit of $(u,v)$
is $\{(u+t,v+t):t\in\Z\}$, which closes up after $\mathrm{lcm}(p,q)$ steps
(the least $t>0$ with $t\equiv0\pmod p$ and $t\equiv0\pmod q$), giving
orbits of size $\mathrm{lcm}(p,q)$; the total number of elements $pq$
divided by orbit size $\mathrm{lcm}(p,q)$ gives $pq/\mathrm{lcm}(p,q)
=\gcd(p,q)$ orbits (using the identity $\gcd(p,q)\mathrm{lcm}(p,q)=pq$,
itself immediate from unique factorization). So an $a_i$-many-$i$-cycles by
$b_j$-many-$j$-cycles pairing contributes $a_ib_j\gcd(i,j)$ orbits (of the
product action) in total for each pair $(i,j)$ (each of the $a_i$ many
$i$-cycles pairs, in the product structure of $N\times X$ under
$(\pi,\sigma)$, with each of the $b_j$ many $j$-cycles, contributing
$\gcd(i,j)$ orbits per pairing, by the multiplication principle for the
$a_ib_j$ independent pairings). Summing over all $(i,j)$ gives
$\sum_{i,j}a_ib_j\gcd(i,j)$ total orbits of $(\pi,\sigma)$ on $N\times X$,
and a fixed relation is an arbitrary union of these orbits, i.e.\ an
arbitrary subset of the set of orbits, giving $2^{\sum_{i,j}a_ib_j\gcd(i,j)}$
fixed relations, by the subset-count $2^{m}$ established in the proof of
\eqref{eq:rawB}.

\eqref{eq:fixIrpi}: A right-injective relation $R$ fixed by $\pi$ (with
$\sigma=\mathrm{id}$) is, by the argument for \eqref{eq:fixBpi}, an
arbitrary right-injective relation between the cycles of $\pi$ and $X$
(constant on cycles, and separately right-injective on the level of
cycles, since right-injectivity for the original relation, being ``at most
one row per column,'' translates to at most one cycle per column once the
relation is cycle-constant). By \eqref{eq:rawIr} with $N$ replaced by the
set of cycles of $\pi$ of size $a_1+\cdots+a_n$ ... \emph{however}, we must
be careful: right-injectivity constant-on-cycles is only possible without
contradiction if each cycle has length dividing into a single ``row'' slot,
i.e.\ (since $R$ is right injective, meaning for each $a\in X$ at most one
\emph{original} index $i$ has $(i,a)\in R$, and $R$ constant on cycles of
$\pi$ forces, if the cycle of $i$ has length $\ell>1$, that all $\ell$
elements of the cycle pair with $a$ simultaneously, contradicting
right-injectivity unless $\ell=1$) only $1$-cycles (fixed points of $\pi$)
can participate. Hence a right-injective relation fixed by $\pi$ is exactly
an arbitrary right-injective relation between the $a_1$ fixed points of
$\pi$ and $X$, of which there are $(1+a_1)^{x}$ by \eqref{eq:rawIr} (with
$n$ there replaced by $a_1$ here); this proves \eqref{eq:fixIrpi}.

\eqref{eq:fixIrsig}: Symmetric argument with $\sigma\in S_X$ in place of
$\pi$: a right-injective $R$ is equivalent (Theorem~\ref{eq:rawIr}'s proof)
to a function $g\colon X\to N\cup\{\ast\}$; $R$ is fixed by $\sigma$ iff $g$
is constant on the cycles of $\sigma$ (identical cycle-constancy argument
applied to the function description, since $(i,a)\in R\Leftrightarrow g(a)=
i$, and fixedness under $(\mathrm{id},\sigma)$ becomes $g(\sigma(a))=g(a)$
for all $a$, i.e.\ $g$ constant on $\sigma$-cycles). A function constant on
cycles is the same as an arbitrary function from the set of cycles of
$\sigma$ (there are $b_1+\cdots+b_x$ of them) to $N\cup\{\ast\}$ (size
$n+1$), giving $(n+1)^{b_1+\cdots+b_x}$ by the multiplication principle.

\eqref{eq:fixIrpisig}: By the reasoning combining the previous two cases, a
right-injective relation fixed by $(\pi,\sigma)$ corresponds to a function
$g$ constant on $\sigma$-cycles, taking values in $N\cup\{\ast\}$, but
additionally each value $g(\text{cycle of length }k)=i\in N$ (a genuine
index, not $\ast$) must, tracing back through $R$'s fixedness under $\pi$
as well, be assignable only if the corresponding relation is also fixed by
$\pi$: concretely, fixedness under $(\pi,\sigma)$ means $(i,a)\in
R\Leftrightarrow(\pi(i),\sigma(a))\in R$, so if a $\sigma$-cycle of length
$k$ is assigned index $i$ throughout (meaning $(i,a)\in R$ for every $a$ in
that cycle), applying $\pi$ we need $(\pi(i),\sigma(a))\in R$ too, i.e.\ the
same $\sigma$-cycle (since $\sigma(a)$ is the next element of the same
cycle) must also be assigned $\pi(i)$; since $g$ is a well-defined function
(single-valued), this forces $\pi(i)=i$ \emph{unless} we instead assign the
$\sigma$-cycle to an entire $\pi$-orbit of indices of some length $d\mid k$,
cycling through $d$ of the $k$ visits... The precise combinatorial count:
for each $\sigma$-cycle of length $k$, the possible fixed assignments are
either ``$\ast$ throughout'' (1 way), or ``cycle through a $\pi$-cycle of
length $d$ dividing $k$'', assigning $R$ so that as $a$ advances around its
length-$k$ cycle, the associated index advances around a chosen $d$-cycle
of $\pi$ (visited $k/d$ times); there are $a_d$ choices of which $d$-cycle
of $\pi$ to use, and $d$ inequivalent ways to phase the two cycles against
each other (a relative rotation), giving $da_d$ assignment-choices for each
divisor $d$ of $k$, plus the $1$ choice of $\ast$; summing over divisors
$d\mid k$ gives $1+\sum_{d\mid k}da_d$ choices for that $\sigma$-cycle, and
by the multiplication principle over the $b_k$ many $\sigma$-cycles of each
length $k=1,\dots,x$, and over all $k$, gives
$\prod_{k=1}^{x}\bigl(1+\sum_{d\mid k}da_d\bigr)^{b_k}$.

\eqref{eq:fixSrpi}--\eqref{eq:fixSrsig}: identical to
\eqref{eq:fixIrpi}--\eqref{eq:fixIrsig} but using \eqref{eq:rawSr} (right
surjective relations between a set of cycles and $X$, or between $N$ and a
set of cycles) in place of \eqref{eq:rawIr}; a right-surjective relation
constant on $\pi$-cycles is exactly an arbitrary right-surjective relation
between the cycles of $\pi$ and $X$ (no length restriction is forced here,
unlike the injective case, because surjectivity --``at least one row per
column''-- is compatible with an entire cycle of any length participating
uniformly), giving $(2^{a_1+\cdots+a_n}-1)^{x}$; and dually for $\sigma$.

\eqref{eq:fixIlrpi}--\eqref{eq:fixIlrpisig}: A relation fixed by $\pi$ that
is both left and right injective (a partial injection, by
Theorem~\ref{thm:rawcounts}'s proof of \eqref{eq:rawIlr}) must, by the same
argument as in \eqref{eq:fixIrpi}, involve only fixed points of $\pi$ among
the ``rows'' actually used (any row used in a length-$>1$ cycle would force
multiple rows to map to the same column, violating right-injectivity, as
argued above); so a fixed partial injection is exactly an arbitrary partial
injection between the $a_1$ fixed points of $\pi$ and all of $X$, of which
there are $\sum_k\binom{a_1}{k}\ffall xk$ by \eqref{eq:rawIlr} (with $n$
replaced by $a_1$); this is \eqref{eq:fixIlrpi}. \eqref{eq:fixIlrsig} is
the transpose-dual statement. For \eqref{eq:fixIlrpisig}: a partial
injection fixed by $(\pi,\sigma)$ can only match a $\pi$-cycle of length $k$
with a $\sigma$-cycle of the \emph{same} length $k$ (else the injective
matching cannot be consistently rotated), and, once a $\pi$-$k$-cycle (one
of the $a_k$ available) is matched with a $\sigma$-$k$-cycle (one of the
$b_k$ available), there are $k$ ways to phase the matching (a relative
rotation) --- so choosing $i$ disjoint such matched pairs, out of the
$\min\{a_k,b_k\}$ possible ones at length $k$, contributes
$\binom{a_k}{i}\binom{b_k}{i}i!\,k^{i}$ (choose $i$ of the $a_k$ cycles,
$i$ of the $b_k$ cycles, a bijection between the two chosen sets of cycles
in $i!$ ways, and a phase in $k^{i}$ ways for the $i$ matched pairs); summing
over $i$ and taking the product over all lengths $k$ (independent choices
at each length, by the multiplication principle) gives
\eqref{eq:fixIlrpisig}.

\eqref{eq:fixSlrpi}--\eqref{eq:fixSlrsig}: A relation fixed by $\pi$ that is
both left and right surjective (a hypergraph) corresponds, exactly as in
\eqref{eq:fixSrpi}, to an arbitrary hypergraph between the cycles of $\pi$
(there are $r=a_1+\cdots+a_n$ of them) and $X$; by \eqref{eq:rawSlr} (with
$n$ replaced by $r$) there are $\sum_k(-1)^{k}\binom rk(2^{r-k}-1)^{x}$ of
these. \eqref{eq:fixSlrsig} is the transpose-dual statement (using the
dual formula $|\Sr_l(N,X)|=(2^x-1)^n$ from Theorem~\ref{thm:rawcounts} and
its analogous inclusion--exclusion refinement, entirely parallel to
\eqref{eq:rawSlr}).
\end{proof}

\section{The main orbit-counting theorem}

Combining Lemma~\ref{lem:fixedpoints} with the Cauchy--Frobenius--Burnside
Lemma (Theorem~\ref{thm:burnside}) and the cycle-type weighting of
Proposition~\ref{prop:numperms-cycletype} gives every orbit count of
Theorem~\ref{thm:rawcounts}'s five types under $S_N$, $S_X$, and
$S_N\times S_X$. We record the pattern once in detail and then tabulate
the rest, since all fifteen formulas follow the identical Burnside argument
with the appropriate fixed-point count of Lemma~\ref{lem:fixedpoints}
substituted in.

\begin{theorem}[Orbit counts under $S_N$, $S_X$, $S_N\times S_X$]
\label{thm:orbitcounts}
Write $b(n,x),i_r(n,x),s_r(n,x),i_{lr}(n,x),s_{lr}(n,x)$ for the numbers of
\eqref{eq:rawB}--\eqref{eq:rawSlr}, and superscript $l,r,lr$ for the group
under consideration ($S_N,S_X,S_N\times S_X$ respectively; e.g.\
$b^{l}(n,x)$ is the number of $S_N$-orbits on $\B(N,X)$). Then, summing
Lemma~\textup{\ref{lem:fixedpoints}}'s fixed-point counts over cycle types
via Proposition~\textup{\ref{prop:numperms-cycletype}} and
Theorem~\textup{\ref{thm:burnside}}:
\begin{align}
b^{l}(n,x)&=\sum_{a_1+2a_2+\cdots+na_n=n}\Bigl(\prod_{d=1}^n d^{a_d}a_d!
\Bigr)^{-1}2^{(a_1+\cdots+a_n)x}; \label{eq:blformula}\\
b^{lr}(n,x)&=\sum_{\substack{\sum_iia_i=n\\ \sum_jjb_j=x}}
\Bigl(\prod_ii^{a_i}a_i!\prod_jj^{b_j}b_j!\Bigr)^{-1}
2^{\sum_{i,j}a_ib_j\gcd(i,j)}; \label{eq:blrformula}\\
i_r^{l}(n,x)&=\sum_{a_1+2a_2+\cdots+na_n=n}\Bigl(\prod_dd^{a_d}a_d!
\Bigr)^{-1}(1+a_1)^{x}; \label{eq:irlformula}\\
s_r^{l}(n,x)&=\sum_{a_1+2a_2+\cdots+na_n=n}\Bigl(\prod_dd^{a_d}a_d!
\Bigr)^{-1}\bigl(2^{a_1+\cdots+a_n}-1\bigr)^{x}; \label{eq:srlformula}\\
i_{lr}^{l}(n,x)&=\sum_{a_1+2a_2+\cdots+na_n=n}\Bigl(\prod_dd^{a_d}a_d!
\Bigr)^{-1}\sum_{k=0}^{\min\{a_1,x\}}\binom{a_1}{k}\ffall xk.
\label{eq:ilrlformula}
\end{align}
The corresponding $S_X$-superscripted and $S_N\times S_X$-superscripted
formulas for $i_r,s_r,i_{lr},s_{lr}$ are obtained the same way from
\eqref{eq:fixIrsig}--\eqref{eq:fixSlrsig}; they will in any case be
superseded in Chapters~\textup{\ref{ch:injective}}
and~\textup{\ref{ch:surjective}} by much more usable closed (bijective)
formulas, which are what appear in the main table
(Table~\textup{\ref{tab:maintable}}) at the end of Part~I.
\end{theorem}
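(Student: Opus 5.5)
The plan is a single uniform template, carried out once for \eqref{eq:blformula} and then repeated for each of the other four displays with a different fixed-point count. Fix one of the sets $S\in\{\B,\Ir_r,\Sr_r,\Ir_{lr}\}$ and the group $G=S_N$ (or $G=S_N\times S_X$ for \eqref{eq:blrformula}). By Proposition~\ref{prop:actionsaregroupactions}, $G$ acts on $S$. The actions of $S_N$ and $S_X$ preserve right injectivity, right surjectivity and left injectivity, since relabeling rows or columns bijectively does not change how many entries occur in a row or column. So the restriction of the action to each of $\Ir_r,\Sr_r,\Ir_{lr}$ is again a group action, and Theorem~\ref{thm:burnside} applies. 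It gives
\[
t_S(G)=\frac1{|G|}\sum_{g\in G}|\Fix_S(g)|.
\]

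The key observation is that every fixed-point count in Lemma~\ref{lem:fixedpoints} depends on $\pi$ only through its cycle type $(1^{a_1}\cdots n^{a_n})$. For pairs $(\pi,\sigma)$ it depends only on the pair of cycle types. I will therefore partition $S_n$ by cycle type, using Proposition~\ref{prop:cyclepartition} and the addition principle, so that each class is nonempty exactly for the tuples with $\sum_d da_d=n$. By Proposition~\ref{prop:numperms-cycletype} the class of type $(a_d)$ has $n!/\prod_d d^{a_d}a_d!$ elements, each contributing the same summand. Dividing by $|S_n|=n!$ cancels the factorial and leaves the weight $\bigl(\prod_d d^{a_d}a_d!\bigr)^{-1}$. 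Substituting \eqref{eq:fixBpi}, \eqref{eq:fixIrpi}, \eqref{eq:fixSrpi} and \eqref{eq:fixIlrpi} in turn then yields \eqref{eq:blformula}, \eqref{eq:irlformula}, \eqref{eq:srlformula} and \eqref{eq:ilrlformula} verbatim.

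For \eqref{eq:blrformula}, I use $|S_N\times S_X|=n!\,x!$ and group the pairs $(\pi,\sigma)$ by their pair of cycle types. The number of pairs with a given pair of types is the product of the two counts from Proposition~\ref{prop:numperms-cycletype}, by the multiplication principle. Both factorials cancel, giving the product weight shown, and the summand comes from \eqref{eq:fixBpisig}. The unstated $S_X$- and $S_N\times S_X$-versions are handled by the same template with \eqref{eq:fixIrsig}--\eqref{eq:fixSlrsig}. The statement only asserts that they arise this way, so it suffices to note that the template is identical.

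The only step that needs genuine care is that the fixed-point counts are class functions, i.e.\ constant on each cycle type. Lemma~\ref{lem:fixedpoints} already expresses each count as a formula in the $a_d$ and $b_j$ alone, so invoking it settles this. Beyond that, the argument reduces to the bookkeeping of exchanging the sum over $G$ for a sum over cycle types, together with checking that the restricted actions are well defined as above.
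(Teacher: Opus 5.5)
Your proposal is correct and matches the paper's proof essentially step for step. Both apply Burnside, group permutations by cycle type via Proposition~\ref{prop:numperms-cycletype} so that $n!$ (or $n!\,x!$) cancels, and substitute the fixed-point counts of Lemma~\ref{lem:fixedpoints}; your explicit check that $\Ir_r,\Sr_r,\Ir_{lr}$ are invariant subsets is a small detail the paper leaves implicit.

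One caveat applies equally to the paper. The closing claim about the $S_N\times S_X$-versions for $s_r$ and $s_{lr}$ cannot literally be obtained from \eqref{eq:fixIrsig}--\eqref{eq:fixSlrsig}, because the lemma gives no joint $(\pi,\sigma)$ fixed-point counts for $\Sr_r$ or $\Sr_{lr}$.
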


\begin{proof}
We prove \eqref{eq:blformula} in full detail; the rest follow the identical
pattern. By Theorem~\ref{thm:burnside} applied to $G=S_N=S_n$ acting on
$S=\B(N,X)$,
\[
b^{l}(n,x)=\frac1{n!}\sum_{\pi\in S_n}|\Fix_\B(\pi)|.
\]
Group the sum over $\pi\in S_n$ by cycle type: by
Proposition~\ref{prop:numperms-cycletype}, the number of $\pi$ of type
$(1^{a_1}\cdots n^{a_n})$ (for each solution of $\sum_dda_d=n$ in
nonnegative integers) is $n!/\prod_dd^{a_d}a_d!$, and by
\eqref{eq:fixBpi} all such $\pi$ share the same value
$|\Fix_\B(\pi)|=2^{(a_1+\cdots+a_n)x}$ (depending only on the cycle type,
specifically only on $a_1+\cdots+a_n$, the total number of cycles). So
\[
\sum_{\pi\in S_n}|\Fix_\B(\pi)|
=\sum_{a_1+2a_2+\cdots+na_n=n}\frac{n!}{\prod_dd^{a_d}a_d!}\cdot
2^{(a_1+\cdots+a_n)x},
\]
(summing over cycle types, i.e.\ over solutions $(a_1,\dots,a_n)$ of
$\sum_dda_d=n$, grouping the $n!/\prod_dd^{a_d}a_d!$ permutations of each
type, all contributing the same fixed-point count). Dividing by $n!$
cancels the explicit $n!$, leaving \eqref{eq:blformula}.

\eqref{eq:blrformula} is the identical argument with
$G=S_N\times S_X=S_n\times S_x$: by Proposition~\ref{prop:numperms-cycletype}
applied independently to $\pi\in S_n$ and $\sigma\in S_x$ (the two choices
being independent, so by the multiplication principle the number of pairs
$(\pi,\sigma)$ of joint cycle type $(1^{a_1}\cdots n^{a_n};1^{b_1}\cdots
x^{b_x})$ is the product $\bigl(n!/\prod_ii^{a_i}a_i!\bigr)\cdot
\bigl(x!/\prod_jj^{b_j}b_j!\bigr)$), combined with \eqref{eq:fixBpisig} and
$|S_N\times S_X|=n!\,x!$ (order of a direct product of groups, immediate
from the definition of the product group operation and
Proposition~\ref{prop:bijcard} applied to each factor), gives
\eqref{eq:blrformula} after the two factorials cancel.

\eqref{eq:irlformula}, \eqref{eq:srlformula}, \eqref{eq:ilrlformula} are
the same computation for $G=S_N$ with, respectively, \eqref{eq:fixIrpi},
\eqref{eq:fixSrpi}, \eqref{eq:fixIlrpi} in place of \eqref{eq:fixBpi}.
\end{proof}

\begin{remark}
Formulas \eqref{eq:blformula}--\eqref{eq:ilrlformula} are correct but
computationally awkward (sums over integer partitions of $n$, or of $n$ and
$x$ jointly). Chapters~\ref{ch:injective} and~\ref{ch:surjective} will
replace the $S_N$-, $S_X$-, and $S_N\times S_X$-orbit counts of the
injective and surjective types by vastly simpler closed formulas, obtained
by direct bijective arguments rather than by summing over cycle types; this
mirrors exactly the strategy of the source paper, which proves the
cycle-index formulas once (as we just did) and then obtains all the useful
closed formulas of its Theorem~2 through Theorem~7 by bijections instead.
\end{remark}

\chapter{Bijective Enumeration of Injective Relations}\label{ch:injective}

The cycle-index formulas of Chapter~\ref{ch:burnsideapplication}, while
correct, are unwieldy. This chapter finds simple closed formulas for the
orbit counts of the injective types (right injective and the two-sided
injective, i.e.\ partial-injection, types) by direct bijections, following
the source paper's Lemmas~2--5 and Theorems~2--5.

\section{Disjunctive families up to \texorpdfstring{$S_X$}{S\_X}}

\begin{lemma}\label{lem:disjSX}
There is a bijection between $S_X$-orbits of disjunctive $n$-families of
$X$ (equivalently, right injective relations, up to the $S_X$-action) and
nonnegative-integer solutions of $\alpha_1+\alpha_2+\cdots+\alpha_n\le x$.
\end{lemma}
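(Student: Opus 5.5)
The plan is to show that the orbit of a disjunctive family $F=(F_i)_{i\in N}$ under $S_X$ is determined exactly by its size vector $(|F_i|)_{i\in N}$. Fix an enumeration $N=\{i_1,\dots,i_n\}$ and define the invariant
\[
\alpha(F)=(\alpha_1,\dots,\alpha_n),\qquad \alpha_k:=|F_{i_k}|.
\]
The main steps, in order, are:
\begin{enumerate}
\item Check that $\alpha(F)$ is a nonnegative solution of $\alpha_1+\cdots+\alpha_n\le x$.
\item Check that $\alpha$ is constant on $S_X$-orbits, so it descends to orbits.
\item Show the induced map on orbits is surjective.
\item Show it is injective.
\end{enumerate}

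\emph{Steps (i) and (ii).} For (i), disjunctivity makes the $F_{i_k}$ pairwise disjoint subsets of $X$. By the addition principle (Proposition~\ref{prop:add}), $\sum_k|F_{i_k}|=\bigl|\bigcup_kF_{i_k}\bigr|\le x$. For (ii), by \eqref{eq:actionFam} with $\pi=\mathrm{id}$, the action sends $F_i$ to $\sigma(F_i)$. Since $\sigma$ is a bijection, $|\sigma(F_i)|=|F_i|$, so $\alpha$ is unchanged along an orbit. Note that the indices are not permuted here, which is why the ordered vector itself, rather than a multiset of sizes, is the invariant.

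\emph{Step (iii), surjectivity.} Given a solution $(\alpha_1,\dots,\alpha_n)$ with $\sum\alpha_k\le x$, list $X=\{a_1,\dots,a_x\}$. Let $F_{i_1}$ be the first $\alpha_1$ elements, $F_{i_2}$ the next $\alpha_2$ elements, and so on, leaving the remaining $x-\sum\alpha_k$ elements unused. This family is disjunctive and has $\alpha(F)=(\alpha_1,\dots,\alpha_n)$.

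\emph{Step (iv), injectivity; this is the main point.} Suppose $F$ and $G$ are disjunctive with $|F_{i_k}|=|G_{i_k}|$ for all $k$. I would build $\sigma\in S_X$ with $\sigma(F_{i_k})=G_{i_k}$ for every $k$, as follows.
\begin{enumerate}
\item For each $k$, choose a bijection $F_{i_k}\to G_{i_k}$, which exists by Proposition~\ref{prop:bijcard}.
\item Let $U_F=X\setminus\bigcup_kF_{i_k}$ and $U_G=X\setminus\bigcup_kG_{i_k}$. Both have size $x-\sum\alpha_k$ by the addition principle, so choose a bijection $U_F\to U_G$.
\item Disjunctivity means $\{F_{i_1},\dots,F_{i_n},U_F\}$ are pairwise disjoint with union $X$, and likewise for $G$. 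So these partial bijections glue into a well-defined bijection $\sigma\colon X\to X$.
\end{enumerate}
By construction $(\mathrm{id},\sigma)\cdot F=G$, so $F$ and $G$ lie in the same orbit.

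The only real care needed is in the gluing step of (iv): it is exactly where disjunctivity is used, and the leftover set $U_F$ must be handled so that $\sigma$ is defined on all of $X$. Combining (i)–(iv), $\alpha$ induces the claimed bijection between $S_X$-orbits and solutions of $\alpha_1+\cdots+\alpha_n\le x$. The equivalence with right injective relations up to $S_X$ follows from Proposition~\ref{prop:translation}(2) and Theorem~\ref{thm:prop2}.

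As a remark for later use, introducing the slack variable $\alpha_0=x-\sum_k\alpha_k$ turns these solutions into solutions of $\alpha_0+\cdots+\alpha_n=x$. Stars and bars (Proposition~\ref{prop:starsbars}) then gives the count $\binom{n+x}{x}$.
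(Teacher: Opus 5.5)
Your proposal is correct and follows essentially the same route as the paper. Both show that the ordered size vector $(|F_i|)_{i\in N}$ is a complete $S_X$-invariant, prove the converse by gluing bijections $F_i\to G_i$ with a bijection between the equal-size leftover sets, and realize every tuple with $\sum\alpha_i\le x$ by consecutive blocks.
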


\begin{proof}
Let $F=(F_1,\dots,F_n)$ and $G=(G_1,\dots,G_n)$ be two disjunctive
$n$-families of $X$. We claim they lie in the same $S_X$-orbit if and only
if $|F_i|=|G_i|$ for every $i=1,\dots,n$.

($\Rightarrow$) If $G=\sigma\cdot F$ for some $\sigma\in S_X$, then by
\eqref{eq:actionFam} (with $\pi=\mathrm{id}$) $G_i=\sigma(F_i)$, and
$|\sigma(F_i)|=|F_i|$ since $\sigma$ is a bijection $X\to X$ (bijections
preserve cardinality of subsets, by Proposition~\ref{prop:bijcard} applied
to the restriction of $\sigma$ to $F_i\to\sigma(F_i)$, itself a bijection).

($\Leftarrow$) Suppose $|F_i|=|G_i|=:\alpha_i$ for every $i$. Since $F$ and
$G$ are both disjunctive, the sets $F_1,\dots,F_n$ are pairwise disjoint (as
are $G_1,\dots,G_n$), so we may choose, for each $i$, a bijection
$\tau_i\colon F_i\to G_i$ (they exist, since $|F_i|=|G_i|$, by
Proposition~\ref{prop:bijcard}). Since the $F_i$ are pairwise disjoint,
these combine into a single injective function $\tau\colon\bigcup_iF_i\to
\bigcup_iG_i$ (well defined and injective since the $F_i$'s domains don't
overlap and the $\tau_i$'s are each injective with disjoint images $G_i$).
Extend $\tau$ to a full permutation $\sigma\in S_X$ arbitrarily on the
complementary sets $X\setminus\bigcup_iF_i$ and $X\setminus\bigcup_iG_i$
(these have equal size $x-\sum_i\alpha_i$, since $\sum_i\alpha_i=
|\bigcup_iF_i|=|\bigcup_iG_i|$ by disjointness and the addition principle,
so a bijection between the two complements exists by
Proposition~\ref{prop:bijcard}, and combining it with $\tau$ gives a
well-defined bijection $\sigma\colon X\to X$, i.e.\ $\sigma\in S_X$). By
construction $\sigma(F_i)=\tau_i(F_i)=G_i$ for every $i$, i.e.\ $\sigma\cdot
F=G$, so $F,G$ lie in the same orbit.

This proves that $S_X$-orbits of disjunctive $n$-families correspond
exactly to the tuples $(\alpha_1,\dots,\alpha_n)=(|F_1|,\dots,|F_n|)$ that
occur, i.e.\ to nonnegative integers with (by disjointness and the addition
principle, since $F_1,\dots,F_n\subseteq X$ pairwise disjoint forces
$\sum_i|F_i|=|\bigcup F_i|\le|X|=x$) $\alpha_1+\cdots+\alpha_n\le x$; and
every such tuple is realized by some disjunctive family (e.g.\ partition an
$(\alpha_1+\cdots+\alpha_n)$-subset of $X$ into consecutive blocks of the
prescribed sizes).
\end{proof}

\begin{theorem}\label{thm:disjSXcount}
The number of $S_X$-orbits of disjunctive $n$-families of $X$ (equivalently
$i_r^{r}(n,x)$, the number of $S_X$-orbits on $\Ir_r(N,X)$) is
$\binom{n+x}{x}$.
\end{theorem}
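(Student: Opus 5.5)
By Lemma~\ref{lem:disjSX}, it suffices to count the nonnegative-integer solutions of $\alpha_1+\cdots+\alpha_n\le x$. That lemma already supplies the bijection between $S_X$-orbits on $\Ir_r(N,X)$ (equivalently, by Theorem~\ref{thm:prop2}, on disjunctive families) and these tuples. The plan is to convert the inequality into an equation with one extra variable and then apply stars and bars.

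The first step is to introduce a slack variable $\alpha_{n+1}:=x-(\alpha_1+\cdots+\alpha_n)$. This gives a bijection between solutions of $\alpha_1+\cdots+\alpha_n\le x$ in nonnegative integers and solutions of $\alpha_1+\cdots+\alpha_n+\alpha_{n+1}=x$ in nonnegative integers. The inverse map simply forgets $\alpha_{n+1}$. Nonnegativity of $\alpha_{n+1}$ is exactly the inequality, so the two maps are mutually inverse.

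The second step is to apply Proposition~\ref{prop:starsbars} with $c=n+1$ colors and total $x$. This gives $\binom{(n+1)+x-1}{x}=\binom{n+x}{x}$ solutions. Combining this with Lemma~\ref{lem:disjSX} yields the claim.

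There is no real obstacle here. The only points needing care are the degenerate edge cases. If $n=0$, the only solution is the empty tuple, and $\binom{x}{x}=1$ agrees. If $x=0$, only the zero tuple occurs, giving $\binom{n}{0}=1$. In both cases the formula of Proposition~\ref{prop:starsbars} remains valid, since $c\ge1$ always.

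As a cross-check, one could instead sum $\binom{n+m-1}{m}$ over $m=0,\dots,x$ and telescope using Pascal's rule (Proposition~\ref{prop:pascal}). However, the slack-variable bijection is more direct and is the argument I would write.
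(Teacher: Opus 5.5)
Your proof is correct, but it takes a different route from the paper. The paper splits the solutions of $\alpha_1+\cdots+\alpha_n\le x$ according to the exact value $k$ of the sum. For each $k$ it applies stars and bars with $c=n$ colors to get $\binom{n+k-1}{k}$. It then sums over $k=0,\dots,x$ using the hockey-stick identity $\sum_{k=0}^{x}\binom{n+k-1}{k}=\binom{n+x}{x}$, which it proves by induction on $x$ via Pascal's rule.

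You instead turn the inequality into an equation with a slack variable and apply stars and bars once with $c=n+1$ colors, so no summation identity is needed. Your slack variable has a natural meaning. It is $\alpha_{n+1}=|X\setminus\bigcup_iF_i|$, the size of the fiber over the extra symbol $\ast$ when a right injective relation is viewed as a function $X\to N\cup\{\ast\}$ (proof of \eqref{eq:rawIr}). This is exactly the argument the paper itself uses later, in Theorem~\ref{thm:bellrows}(iii).

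Your version is also slightly cleaner in the degenerate case $n=0$. There the paper's base case uses $\binom{n-1}{0}=\binom{-1}{0}$, which lies outside its definition of binomial coefficients (only a top index $\ge0$ is defined) and so needs a convention. You always have $c=n+1\ge1$. In exchange, the paper's route produces the hockey-stick identity along the way.
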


\begin{proof}
By Lemma~\ref{lem:disjSX}, we must count nonnegative-integer solutions of
$\alpha_1+\cdots+\alpha_n\le x$. For each fixed value $k=0,\dots,x$ of the
sum $\alpha_1+\cdots+\alpha_n=k$, the number of solutions is
$\binom{n+k-1}{k}$ by stars-and-bars (Proposition~\ref{prop:starsbars}, with
$c=n$ colors and total $k$). Summing over $k=0,\dots,x$ (addition principle,
the cases being disjoint) and applying the ``hockey stick'' identity
$\sum_{k=0}^{x}\binom{n+k-1}{k}=\binom{n+x}{x}$ --- itself an instance of
repeated Pascal's rule (Proposition~\ref{prop:pascal}): by induction on $x$,
the case $x=0$ gives $\binom{n-1}{0}=1=\binom n0$, and assuming the identity
for $x-1$, $\sum_{k=0}^{x}\binom{n+k-1}{k}=\binom{n+x-1}{x-1}+
\binom{n+x-1}{x}=\binom{n+x}{x}$ by Pascal's rule applied to the last two
terms --- gives the total $\binom{n+x}{x}$.
\end{proof}

\section{The row of partial injections}

Small disjunctive families ($|F_i|\le1$ for every $i$) correspond, by
Proposition~\ref{prop:translation}, to relations that are both left and
right injective: \emph{partial injections} (partial matchings) between $N$
and $X$.

\begin{lemma}\label{lem:smalldisjcorr}
There are bijections between:
\begin{enumerate}
\item $S_N$-orbits of small disjunctive $n$-families of $X$ and subsets of
$X$ of size at most $n$;
\item $S_X$-orbits of small disjunctive $n$-families of $X$ and binary
strings of length $n$ with at most $x$ ones;
\item $S_N\times S_X$-orbits of small disjunctive $n$-families of $X$ and
the set $\{0,1,\dots,\min\{n,x\}\}$.
\end{enumerate}
\end{lemma}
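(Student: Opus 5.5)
In each of the three cases I will find a complete invariant: a quantity that is unchanged by the group action and that pins down the orbit. Then I check which values of that invariant actually occur, exactly as in the proof of Lemma~\ref{lem:disjSX}. Throughout, a small disjunctive family is the same thing as a partial injection $R$, namely the graph of a bijection between a subset $A\subseteq N$ and a subset $B\subseteq X$ with $|A|=|B|=k\le\min\{n,x\}$. This is the description from the proof of \eqref{eq:rawIlr}. Here $A=\{i:F_i\neq\varnothing\}$ and $B=\bigcup_iF_i$.

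\textbf{Case (iii).} The invariant is $k=|A|$. The $(\Rightarrow)$ direction holds because $(\pi,\sigma)$ sends $A$ to $\pi(A)$ and $B$ to $\sigma(B)$, and bijections preserve cardinality. For $(\Leftarrow)$, take two partial injections $f\colon A\to B$ and $g\colon A'\to B'$, both of size $k$. Choose any $\pi\in S_N$ with $\pi(A)=A'$; it is assembled from bijections $A\to A'$ and $N\setminus A\to N\setminus A'$, both of which exist by Proposition~\ref{prop:bijcard}. Then set $\sigma=g\circ\pi\circ f^{-1}$ on $B$ and extend it arbitrarily by a bijection $X\setminus B\to X\setminus B'$. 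One checks from \eqref{eq:actionB} that $(\pi,\sigma)\cdot R_f=R_g$. Every $k$ with $0\le k\le\min\{n,x\}$ is realized, so the orbits correspond to $\{0,\dots,\min\{n,x\}\}$.

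\textbf{Case (i).} Under $S_N$ alone, the invariant is the image $B=\bigcup_iF_i\subseteq X$. It is preserved because $\pi$ only permutes the indices of the $F_i$ and leaves $X$ untouched. Conversely, suppose $f\colon A\to B$ and $g\colon A'\to B$ have the same image. Define $\pi=g^{-1}\circ f$ on $A$ and extend it by a bijection $N\setminus A\to N\setminus A'$; these complements have the same size $n-|B|$. This $\pi$ carries one family to the other. The subsets $B$ that occur are exactly those with $|B|\le n$, since $|B|=|A|\le n$ and any such $B$ is the image of an injection from some $|B|$-subset of $N$.

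\textbf{Case (ii).} Under $S_X$, the invariant is the indicator string $\varepsilon\in\{0,1\}^n$, where $\varepsilon_i=|F_i|$. This is Lemma~\ref{lem:disjSX} specialised to $\alpha_i\in\{0,1\}$: orbits of disjunctive families correspond to size vectors with $\sum_i\alpha_i\le x$, and smallness means exactly $\alpha_i\le1$. So the orbits correspond to binary strings of length $n$ with at most $x$ ones, and I can cite the lemma directly instead of rebuilding $\sigma$.

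The only delicate step is the extension argument in (i) and (iii). There I must check that the complements have equal size and that the glued map really is a permutation, as in Lemma~\ref{lem:disjSX}. I must also verify the direction of the action, $\pi$ versus $\pi^{-1}$, in \eqref{eq:actionFam}, so that the constructed group element maps the first family to the second and not the reverse.
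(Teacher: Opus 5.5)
Your proposal is correct and follows the paper's proof. You use the same complete invariants ($\bigcup_iF_i$ for $S_N$, the emptiness bit-string for $S_X$, the number $k$ of nonempty blocks for $S_N\times S_X$), and you establish completeness the same way, by gluing bijections on the used parts with bijections between equal-size complements. Your explicit formula $\sigma=g\circ\pi\circ f^{-1}$ in (iii) makes the paper's ``combine (i) and (ii)'' step concrete. Citing Lemma~\ref{lem:disjSX} for (ii) is a legitimate shortcut, since smallness is a condition on the $S_X$-invariant size vector $(|F_i|)_i$; working with relations instead of families is justified by the intertwining Theorem~\ref{thm:prop2}.
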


\begin{proof}
(i) A small disjunctive family $F=(F_i)_{i\in N}$ has each $F_i$ either
$\varnothing$ or a single element, and (disjunctive) the nonempty $F_i$'s
have pairwise distinct singleton values; so $A:=\bigcup_iF_i\subseteq X$ has
$|A|\le n$ (at most one element contributed per index). Two such families
$F,G$ lie in the same $S_N$-orbit iff $G=(F_{\pi^{-1}(i)})_{i}$ for some
$\pi\in S_N$ (action \eqref{eq:actionFam} with $\sigma=\mathrm{id}$), i.e.\
iff $G$ is obtained from $F$ merely by permuting which index carries which
singleton value (and which indices carry $\varnothing$); this changes
nothing about the \emph{set} $A=\bigcup_iF_i$ of values used, and,
conversely, if $\bigcup_iF_i=\bigcup_iG_i=A$ then, since both $F$ and $G$
assign the $|A|$ elements of $A$ injectively to $|A|$ of the $n$ indices
(disjunctively) and $\varnothing$ to the rest, there is a permutation
$\pi\in S_N$ carrying one assignment to the other (match up which index
carries which element of $A$ under $F$ versus under $G$, and match the
unused indices arbitrarily, using Proposition~\ref{prop:bijcard} for both
matchings since both index sets have $n-|A|$ elements). So the $S_N$-orbit
of $F$ is determined exactly by $A=\bigcup_iF_i$, giving the bijection with
subsets of $X$ of size $\le n$.

(ii) Fix $F$ small disjunctive; record, for each $i\in N$ in a fixed
listing $N=\{i_1,\dots,i_n\}$, the bit $\epsilon_j=1$ if $F_{i_j}\neq
\varnothing$ and $\epsilon_j=0$ otherwise. Two families $F,G$ lie in the
same $S_X$-orbit (action \eqref{eq:actionFam} with $\pi=\mathrm{id}$, so
$G_i=\sigma(F_i)$ for all $i$ and some $\sigma\in S_X$) iff they have the
same bit-string $(\epsilon_j)_j$: indeed $\sigma(F_i)=\varnothing$ iff
$F_i=\varnothing$ (bijections preserve emptiness), so the bit string is an
$S_X$-orbit invariant; conversely, if $F,G$ share the same bit string, then
for each $i$ with $\epsilon=1$, $F_i=\{p_i\}$ and $G_i=\{q_i\}$ are
singletons, and since $F$ (resp.\ $G$) is disjunctive the $p_i$'s (resp.\
$q_i$'s) are pairwise distinct, so there is a bijection of the finite set
$\{p_i\}\to\{q_i\}$ sending $p_i\mapsto q_i$, extendable to a full
$\sigma\in S_X$ (as in the proof of Lemma~\ref{lem:disjSX}) with
$\sigma(F_i)=G_i$ for every $i$. The number of ones in the bit string is
$|\{i:F_i\neq\varnothing\}|\le\min\{n,|X|\}=\min\{n,x\}$ (at most $n$
indices, and, by disjointness, at most $x$ distinct singleton values are
available), giving exactly the strings described.

(iii) Combine (i) and (ii): under the joint action, the complete invariant
of the orbit of $F$ is the single number $k=|\{i:F_i\neq\varnothing\}|
=|\bigcup_iF_i|$ (both descriptions agree, since $F$ small disjunctive means
$|\bigcup_iF_i|$ equals the number of nonempty blocks), which ranges over
$0,1,\dots,\min\{n,x\}$; and any two small disjunctive families with the
same $k$ lie in the same $S_N\times S_X$-orbit, by combining a permutation
$\pi\in S_N$ that matches up which indices are nonempty (as in (i)) with a
permutation $\sigma\in S_X$ that matches up the used values (as in (ii)).
\end{proof}

\begin{theorem}[Orbit counts for partial injections]\label{thm:partialinjorbit}
\begin{align}
i_{lr}^{l}(n,x)&=\sum_{k=0}^{n}\binom xk; \label{eq:ilrl}\\
i_{lr}^{r}(n,x)&=\sum_{k=0}^{x}\binom nk; \label{eq:ilrr}\\
i_{lr}^{lr}(n,x)&=1+\min\{n,x\}. \label{eq:ilrlr}
\end{align}
\end{theorem}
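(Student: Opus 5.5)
The plan is to read off each count directly from the corresponding part of Lemma~\ref{lem:smalldisjcorr}. Theorem~\ref{thm:prop2} says orbit counts agree across relations and families, and Proposition~\ref{prop:translation} identifies small disjunctive $n$-families of $X$ with left-and-right injective relations. So $i_{lr}^{l}$, $i_{lr}^{r}$, $i_{lr}^{lr}$ equal the numbers of $S_N$-, $S_X$-, and $S_N\times S_X$-orbits of small disjunctive families. It then suffices to count the target sets of the three bijections.

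For \eqref{eq:ilrl}, Lemma~\ref{lem:smalldisjcorr}(i) puts the $S_N$-orbits in bijection with subsets of $X$ of size at most $n$. Group these subsets by size $k$: there are $\binom xk$ of each size by definition of the binomial coefficient. The addition principle then gives $\sum_{k=0}^{n}\binom xk$. Terms with $k>x$ vanish by the convention $\binom xk=0$, so the sum is correct whether or not $n\le x$.

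For \eqref{eq:ilrr}, Lemma~\ref{lem:smalldisjcorr}(ii) gives binary strings of length $n$ with at most $x$ ones. A string with exactly $k$ ones is the same as a choice of a $k$-subset of the $n$ positions, so there are $\binom nk$ of them. Summing over $k=0,\dots,x$ gives $\sum_{k=0}^{x}\binom nk$, again with $\binom nk=0$ for $k>n$ absorbing the case $x>n$.

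One point needs checking here. Part (ii) records the number of ones as at most $\min\{n,x\}$, while \eqref{eq:ilrr} sums up to $x$. These agree because strings of length $n$ have at most $n$ ones automatically. I would also confirm that every such string is realized by some family: with $k\le\min\{n,x\}$ ones, assign $k$ distinct elements of $X$ to the marked indices. For \eqref{eq:ilrlr}, part (iii) gives a bijection with $\{0,\dots,\min\{n,x\}\}$, which has $1+\min\{n,x\}$ elements.

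The only real obstacle is the bookkeeping of the summation ranges and the realizability of every invariant value. The orbit-classification work has already been done in Lemma~\ref{lem:smalldisjcorr}.
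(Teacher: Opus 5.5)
Your proposal is correct and follows the paper's proof essentially verbatim: each count is read off from the corresponding part of Lemma~\ref{lem:smalldisjcorr} by grouping by size $k$ and applying the addition principle. Your extra remarks on summation ranges (the convention $\binom xk=0$ for $k>x$, the automatic bound of $n$ ones) and on realizability are harmless added care that the paper leaves implicit in the lemma's bijections.
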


\begin{proof}
\eqref{eq:ilrl}: By Lemma~\ref{lem:smalldisjcorr}(i), $S_N$-orbits biject
with subsets of $X$ of size at most $n$; grouping by size $k=0,\dots,n$
(addition principle) and using $\binom xk$ subsets of each size gives
$\sum_{k=0}^{n}\binom xk$.

\eqref{eq:ilrr}: By Lemma~\ref{lem:smalldisjcorr}(ii), $S_X$-orbits biject
with binary strings of length $n$ with at most $x$ ones; such a string is
determined by which $k\le x$ of its $n$ positions are ones, giving
$\binom nk$ choices for each $k=0,\dots,x$ (a subset of the $n$ positions),
and summing (addition principle) gives $\sum_{k=0}^{x}\binom nk$.

\eqref{eq:ilrlr}: Immediate from Lemma~\ref{lem:smalldisjcorr}(iii): there
are exactly $\min\{n,x\}+1$ orbits, indexed by $k=0,1,\dots,\min\{n,x\}$.
\end{proof}

\begin{corollary}[Bivariate generating identity, elementary form]
For every $n,x\ge0$, $\displaystyle\sum_{k=0}^{n}\binom xk$ and
$\displaystyle\sum_{k=0}^{x}\binom nk$ are related by the transpose duality
of Corollary~\textup{\ref{cor:transpose}}: swapping $N,X$ exchanges
\eqref{eq:ilrl} and \eqref{eq:ilrr}, consistent with $i_{lr}^{l}(n,x)=
i_{lr}^{r}(x,n)$.
\end{corollary}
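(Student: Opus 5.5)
The claim to establish is $i_{lr}^{l}(n,x)=i_{lr}^{r}(x,n)$, i.e.\ the formula \eqref{eq:ilrl} with $(n,x)$ becomes the formula \eqref{eq:ilrr} with $(x,n)$. I will give two arguments: a one-line algebraic check using the closed forms of Theorem~\ref{thm:partialinjorbit}, and a structural argument via the transpose of Corollary~\ref{cor:transpose}. The structural one explains the identity rather than observing it.

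\emph{Algebraic check.} Substitute $(n,x)\mapsto(x,n)$ into \eqref{eq:ilrr}. The result is $i_{lr}^{r}(x,n)=\sum_{k=0}^{n}\binom xk$, which is literally the right side of \eqref{eq:ilrl}. No identity is needed, only a careful renaming of the bound variables.

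\emph{Structural argument.} Let $T\colon\B(N,X)\to\B(X,N)$ be the transpose $R\mapsto R^{-1}$. By Corollary~\ref{cor:transpose}, $T$ exchanges left injectivity with right injectivity. It therefore maps the left-and-right injective relations $\Ir_{lr}(N,X)$ bijectively onto $\Ir_{lr}(X,N)$.

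The same corollary gives the action identity $T((\pi,\sigma)\cdot R)=(\sigma,\pi)\cdot T(R)$. Taking $\sigma=\mathrm{id}$ shows that $T$ intertwines the $S_N$-action on $\Ir_{lr}(N,X)$ with the action of the same group $S_N$ on $\Ir_{lr}(X,N)$. In $\B(X,N)$, however, $N$ is the \emph{right} factor, so this is the ``right'' group action there.

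Finally, the last paragraph of the proof of Theorem~\ref{thm:prop2} shows that an intertwining bijection induces a bijection on orbit sets. Hence the number of $S_N$-orbits on $\Ir_{lr}(N,X)$ equals the number of $S_N$-orbits on $\Ir_{lr}(X,N)$, which is $i_{lr}^{r}(x,n)$.

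The only delicate point is bookkeeping. The superscripts $l$ and $r$ refer to position (first or second factor), not to the letters $N$ and $X$. The check to make is that after transposing, $S_N$ acts on the second coordinate, so it is counted by the superscript $r$ with the arguments swapped. Once that is confirmed, the explicit formulas \eqref{eq:ilrl} and \eqref{eq:ilrr} agree as the algebraic check shows.
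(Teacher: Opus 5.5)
Your proposal is correct and takes the same route as the paper: you derive $i_{lr}^{l}(n,x)=i_{lr}^{r}(x,n)$ from the transpose of Corollary~\ref{cor:transpose}, then observe that \eqref{eq:ilrl} and \eqref{eq:ilrr} coincide after swapping $n,x$. Your appeal to the action identity $T((\pi,\sigma)\cdot R)=(\sigma,\pi)\cdot T(R)$ and to the orbit-bijection argument in the proof of Theorem~\ref{thm:prop2} fills in what the paper calls ``immediate,'' and your remark that the superscripts refer to position, not to the letters $N,X$, correctly disambiguates the paper's loose phrase ``$S_X$-orbits \dots\ of relations between $X,N$.''
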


\begin{proof}
Immediate from Corollary~\ref{cor:transpose}: transposing a relation
exchanges left and right injectivity together with the roles of $N,X$, so
the number of $S_N$-orbits of (left-and-right-injective) relations between
$N,X$ equals the number of $S_X$-orbits of the same type of relations
between $X,N$, i.e.\ $i_{lr}^{l}(n,x)=i_{lr}^{r}(x,n)$, matching
\eqref{eq:ilrl} and \eqref{eq:ilrr} with $n,x$ swapped.
\end{proof}

\section{Small disjunctive families under \texorpdfstring{$S_N\times S_X$}
{S\_N x S\_X}, revisited via divisions}
\label{sec:twelvefoldstirling}

Theorem~\ref{thm:disjSXcount} used a direct stars-and-bars argument.
Chapter~\ref{ch:surjective} will need the analogous $S_N$-orbit count for
\emph{arbitrary} (not necessarily small) disjunctive families, which
requires set partitions (Stirling numbers, Chapter~\ref{ch:stirling}); we
record it now since the argument is a direct continuation of
Lemma~\ref{lem:smalldisjcorr}(i)'s technique.

\begin{lemma}\label{lem:disjSNgeneral}
There is a bijection between $S_N$-orbits of (arbitrary, not necessarily
small) disjunctive $n$-families of $X$ and pairs $(A,\mathcal A)$ where
$A\subseteq X$ and $\mathcal A$ is a partition of $A$ into at most $n$
blocks.
\end{lemma}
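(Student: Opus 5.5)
The plan is to define an explicit orbit invariant and show it is complete and surjective, exactly as in Lemma~\ref{lem:smalldisjcorr}(i). To a disjunctive family $F=(F_i)_{i\in N}$ assign $A:=\bigcup_{i\in N}F_i\subseteq X$ and $\mathcal A:=\{F_i: i\in N,\ F_i\neq\varnothing\}$, the set of nonempty blocks. Because $F$ is disjunctive, distinct indices with nonempty blocks give distinct, pairwise disjoint sets. Hence $\mathcal A$ is a set of pairwise disjoint nonempty subsets with union $A$, that is, a partition of $A$. Its number of blocks equals the number of indices with $F_i\neq\varnothing$, which is at most $n$. Note that when $A=\varnothing$ this is the empty partition; the remark that the empty partition is allowed should be recorded.

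First I would show that $(A,\mathcal A)$ is constant on $S_N$-orbits. By \eqref{eq:actionFam} with $\sigma=\mathrm{id}$, $(\pi\cdot F)_i=F_{\pi^{-1}(i)}$. The multiset of blocks is therefore merely reindexed, so the set of nonempty blocks and their union are unchanged.

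Next I would prove completeness. Suppose $F$ and $G$ have the same invariant $(A,\mathcal A)$ with $|\mathcal A|=k$. Put $P=\{i:F_i\neq\varnothing\}$ and $Q=\{i:G_i\neq\varnothing\}$; both have $k$ elements. Since $F$ is disjunctive, the map $P\to\mathcal A$, $i\mapsto F_i$, is a bijection: it is onto by definition of $\mathcal A$, and injective because two distinct indices with the same nonempty block would violate $F_i\cap F_j=\varnothing$. The same holds for $Q\to\mathcal A$, $i\mapsto G_i$. Composing one with the inverse of the other gives a bijection $\pi_0\colon P\to Q$ with $G_{\pi_0(i)}=F_i$. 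Since $|N\setminus P|=|N\setminus Q|=n-k$, Proposition~\ref{prop:bijcard} supplies a bijection $N\setminus P\to N\setminus Q$. Gluing the two bijections gives $\pi\in S_N$, and then $G_{\pi(i)}=F_i$ holds for every $i$, with empty blocks going to empty blocks. This says exactly $G=\pi\cdot F$.

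Finally I would prove surjectivity. Given $A\subseteq X$ and a partition $\mathcal A=\{B_1,\dots,B_k\}$ of $A$ with $k\le n$, list $N=\{i_1,\dots,i_n\}$ and set $F_{i_j}=B_j$ for $j\le k$ and $F_{i_j}=\varnothing$ otherwise. This family is disjunctive and has invariant $(A,\mathcal A)$. The induced map from orbits to pairs is then well defined, injective and onto, which proves the lemma.

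The only delicate step is the injectivity of $i\mapsto F_i$ on $P$: it is exactly where disjunctiveness and nonemptiness are used, since repeated empty blocks are harmless but repeated nonempty blocks cannot occur.
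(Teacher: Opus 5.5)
Your proof is correct and follows the paper's argument: you use the same invariant $(A,\mathcal A)$, the same reindexing check for orbit-invariance, and the same matching of the nonempty indices, extended arbitrarily on the empty ones. Two steps are cleaner than the paper's: you explicitly prove that every pair with at most $n$ blocks is realized, which the paper leaves implicit, and you justify that no nonempty block can repeat directly by disjunctiveness, where the paper's justification is muddled.
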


\begin{proof}
Given a disjunctive family $F=(F_i)_{i\in N}$, let $A=\bigcup_iF_i$
(disjoint union) and let $\mathcal A=\{F_i:F_i\neq\varnothing\}$, which is
a genuine partition of $A$ into at most $n$ (nonempty, pairwise disjoint)
blocks, since the nonempty $F_i$'s are pairwise disjoint by hypothesis and
their union is $A$ by definition.

If $G=\pi\cdot F$ for $\pi\in S_N$ (action \eqref{eq:actionFam} with
$\sigma=\mathrm{id}$, so $G_i=F_{\pi^{-1}(i)}$), then $\bigcup_iG_i=
\bigcup_iF_{\pi^{-1}(i)}=\bigcup_iF_i=A$ (reindexing the union by the
bijection $\pi^{-1}$), and $\{G_i:G_i\neq\varnothing\}=\{F_{\pi^{-1}(i)}:
F_{\pi^{-1}(i)}\neq\varnothing\}=\{F_j:F_j\neq\varnothing\}$ (reindexing by
$j=\pi^{-1}(i)$), i.e.\ the same set of blocks $\mathcal A$. So
$(A,\mathcal A)$ is an $S_N$-orbit invariant.

Conversely, if $F,G$ give the same pair $(A,\mathcal A)$, then $F$ and $G$
each assign the blocks of $\mathcal A$ to indices of $N$ injectively
(distinct nonempty blocks go to distinct indices, since $F$, resp.\ $G$, is
a family indexed by $N$ with the nonempty entries being exactly the
elements of $\mathcal A$, each occurring exactly once as some $F_i$, resp.\
$G_i$ --- note repeated blocks are not allowed here since blocks of a
partition are, by definition, distinct nonempty sets, though the family
formalism would in principle allow repeats; since $F,G$ have $\mathcal A$
as their set of nonempty values, each block of $\mathcal A$ occurs as
\emph{exactly one} $F_i$ and \emph{exactly one} $G_j$, as otherwise
$\mathcal A$ counted with multiplicity would differ from a partition's
list of distinct blocks). So there is a bijection between $\{i:F_i\neq
\varnothing\}$ and $\mathcal A$ (via $F$) and between $\{j:G_j\neq
\varnothing\}$ and $\mathcal A$ (via $G$), hence a bijection $\rho$ between
$\{i:F_i\ne\varnothing\}$ and $\{j:G_j\neq\varnothing\}$ matching indices
that carry the same block. Extend $\rho$ to a permutation $\pi\in S_N$
arbitrarily on the (equal-size, both $n-|\mathcal A|$) complementary sets
of indices carrying $\varnothing$ under $F$ and under $G$ respectively
(Proposition~\ref{prop:bijcard}). Then $\pi\cdot F=G$ by construction.

This shows the $S_N$-orbit of $F$ is completely determined by, and
determines, the pair $(A,\mathcal A)$, establishing the bijection.
\end{proof}

\begin{theorem}\label{thm:disjSNstirling}
The number of $S_N$-orbits of disjunctive $n$-families of $X$ is
\begin{equation}
i_r^{l}(n,x)=\sum_{k=0}^{x}\sum_{l=0}^{n}\binom xk\stir kl.
\end{equation}
\end{theorem}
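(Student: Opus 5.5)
The proof is a direct count through Lemma~\ref{lem:disjSNgeneral}. That lemma puts $S_N$-orbits of disjunctive $n$-families of $X$ in bijection with pairs $(A,\mathcal A)$, where $A\subseteq X$ and $\mathcal A$ is a partition of $A$ into at most $n$ blocks. So it suffices to count these pairs, and we sort them by the two natural parameters $k=|A|$ and $l=|\mathcal A|$.

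\emph{Step 1 (fix $k$).} For each $k=0,\dots,x$ there are $\binom xk$ subsets $A\subseteq X$ of size $k$, by definition of the binomial coefficient.

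\emph{Step 2 (fix $l$).} For a fixed $A$ of size $k$, the number of partitions of $A$ into exactly $l$ blocks is $\stir kl$, by the definition of the Stirling numbers in Chapter~\ref{ch:stirling}. The constraint ``at most $n$ blocks'' means $l$ ranges over $0,\dots,n$. The conventions $\stir{k}{0}=[k=0]$ and $\stir kl=0$ for $l>k$ make the upper limit $n$ harmless even when $n>k$. They also handle $A=\varnothing$ correctly: it has exactly one partition, the empty one with $0\le n$ blocks, so it contributes $\binom x0\stir00=1$, matching the all-empty family.

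\emph{Step 3 (assemble).} By the multiplication principle, the number of pairs with given $(k,l)$ is $\binom xk\stir kl$. The cases for distinct $(k,l)$ are disjoint and exhaustive, because $k=|A|$ and $l=|\mathcal A|$ are determined by the pair. The addition principle then gives $\sum_{k=0}^{x}\sum_{l=0}^{n}\binom xk\stir kl$, which equals $i_r^l(n,x)$ by Lemma~\ref{lem:disjSNgeneral} together with the identification of disjunctive families with right injective relations (Proposition~\ref{prop:translation}(2) and Theorem~\ref{thm:prop2}).

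The only delicate point is not in this computation but in the bijection itself. We must be sure that the nonempty blocks of a disjunctive family are pairwise distinct, so that $\mathcal A$ really has $|\{i:F_i\neq\varnothing\}|\le n$ elements. This holds because distinct nonempty blocks are disjoint, hence unequal. Since that fact was already established in Lemma~\ref{lem:disjSNgeneral}, I expect the main thing to check here is just the boundary conventions for $k=0$ and $l>k$.
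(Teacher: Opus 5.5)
Your proposal is correct and matches the paper's proof. Both count the pairs $(A,\mathcal A)$ of Lemma~\ref{lem:disjSNgeneral} by choosing $A$ of size $k$ in $\binom xk$ ways and a partition of $A$ into exactly $l\le n$ blocks in $\stir kl$ ways, then apply the multiplication and addition principles; your explicit checks of the boundary conventions and of the identification with right injective relations add care but do not change the route.
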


\begin{proof}
By Lemma~\ref{lem:disjSNgeneral}, we count pairs $(A,\mathcal A)$: choose
$A\subseteq X$ of size $k=0,\dots,x$ ($\binom xk$ ways), then a partition
of $A$ into at most $n$ blocks, grouped by the exact number of blocks
$l=0,\dots,\min\{k,n\}$ (there are $\stir kl$ such partitions, by
definition of the Stirling number; the constraint $l\le n$ truncates the
sum, and $l\le k$ is automatic since $\stir kl=0$ for $l>k$). By the
multiplication and addition principles,
$i_r^l(n,x)=\sum_{k=0}^{x}\binom xk\sum_{l=0}^{n}\stir kl$
(the inner sum over $l$ automatically stops contributing beyond $l=k$ since
$\stir kl=0$ there), which is the stated double sum.
\end{proof}

\begin{remark}
This recovers, and rigorously justifies via
Lemma~\textup{\ref{lem:disjSNgeneral}}, the identity used implicitly (via
``the twelvefold way'') in the source paper's Theorem~3$'$; here it is
proved from the Stirling-number machinery of
Chapter~\textup{\ref{ch:stirling}} alone, with no unproved external
citation.
\end{remark}

\chapter{Bijective Enumeration of Surjective Relations}\label{ch:surjective}

\section{Coverings and hypergraphs under \texorpdfstring{$S_N$}{S\_N}}

\begin{lemma}\label{lem:coveringSN}
There is a bijection between $S_N$-orbits of $n$-families of $X$ and
nonnegative-integer solutions of $\sum_{A\subseteq X}\alpha_A=n$ (one
variable $\alpha_A$ for each of the $2^{x}$ subsets $A\subseteq X$); under
this bijection, $S_N$-orbits of \emph{strict} $n$-families (Definition
\textup{\ref{def:sixprops}}) correspond to solutions with $\alpha_
\varnothing=0$, i.e.\ to solutions of $\sum_{\varnothing\neq A\subseteq X}
\alpha_A=n$.
\end{lemma}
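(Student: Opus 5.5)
The plan is to attach to each family its multiplicity vector and show that this vector is a complete $S_N$-orbit invariant whose possible values are exactly the stated solutions. Given $F=(F_i)_{i\in N}\in\mathrm{Fam}_n(X)$, define for every $A\subseteq X$
\[
\alpha_A(F):=|\{i\in N:F_i=A\}|.
\]
The fibers $\{i:F_i=A\}$, for $A$ ranging over $\PS(X)$, partition $N$, since every index $i$ has exactly one value $F_i$. The addition principle (Proposition~\ref{prop:add}) then gives $\sum_{A\subseteq X}\alpha_A(F)=n$, so the map $F\mapsto(\alpha_A(F))_A$ lands in the solution set.

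\emph{Invariance.} Let $G=\pi\cdot F$, so $G_i=F_{\pi^{-1}(i)}$ by \eqref{eq:actionFam} with $\sigma=\mathrm{id}$. Then $\{i:G_i=A\}=\pi(\{j:F_j=A\})$. Since $\pi$ is a bijection, this set has the same size as $\{j:F_j=A\}$, so $\alpha_A(G)=\alpha_A(F)$ for every $A$, and the map descends to $S_N$-orbits.

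\emph{Injectivity on orbits.} Suppose $\alpha_A(F)=\alpha_A(G)$ for all $A$. For each $A$, Proposition~\ref{prop:bijcard} provides a bijection $\pi_A\colon\{j:F_j=A\}\to\{i:G_i=A\}$. The domains of the $\pi_A$ partition $N$, and so do the codomains, so the $\pi_A$ glue into a single permutation $\pi\in S_N$. This is the same gluing used in Lemmas~\ref{lem:disjSX} and~\ref{lem:disjSNgeneral}. By construction $G_{\pi(j)}=F_j$ for every $j$, which says exactly that $G=\pi\cdot F$.

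\emph{Surjectivity.} Given any solution $(\alpha_A)_A$ with $\sum_A\alpha_A=n$, list the subsets of $X$ in some fixed order and split a fixed listing of $N$ into consecutive blocks of sizes $\alpha_A$. Setting $F_i=A$ on the block for $A$ produces a family whose multiplicity vector is $(\alpha_A)_A$.

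\emph{Strict families.} A family is strict exactly when no $F_i$ is empty, that is, when $\alpha_\varnothing(F)=0$. This condition is a function of the invariant, so it is preserved by the action. Restricting the bijection to strict families therefore gives a bijection with the solutions having $\alpha_\varnothing=0$, equivalently the solutions of $\sum_{A\neq\varnothing}\alpha_A=n$.

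The only step needing real care is the gluing of the $\pi_A$ into a well-defined permutation, together with checking the direction of the action formula ($\pi^{-1}$ versus $\pi$). Everything else is bookkeeping with the addition principle.
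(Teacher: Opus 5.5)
Your proposal is correct and follows the paper's proof essentially step for step. Both use the multiplicity vector $(\alpha_A(F))_A$ as a complete $S_N$-invariant, prove invariance via $\{i:G_i=A\}=\pi(\{j:F_j=A\})$, glue fiberwise bijections into a permutation for the converse, realize every solution by listing copies of each $A$, and read strictness as $\alpha_\varnothing=0$. Your direct check $G_{\pi(j)}=F_j$ also settles the $\pi$ versus $\pi^{-1}$ direction more cleanly than the paper, which corrects itself mid-argument.
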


\begin{proof}
Let $F=(F_i)_{i\in N}$ and $G=(G_i)_{i\in N}$ be $n$-families of $X$ (no
disjointness assumed now). For $A\subseteq X$ let $\alpha_A(F)=|\{i\in N:
F_i=A\}|$ (the ``multiplicity'' of $A$ in $F$); clearly $\sum_A\alpha_A(F)
=n$ (every index $i$ contributes to exactly one term $\alpha_{F_i}(F)$, so
by the addition principle the total is $n=|N|$).

We claim $F,G$ lie in the same $S_N$-orbit iff $\alpha_A(F)=\alpha_A(G)$
for every $A\subseteq X$. If $G=\pi\cdot F$ (action \eqref{eq:actionFam}
with $\sigma=\mathrm{id}$, so $G_i=F_{\pi^{-1}(i)}$), then $\{i:G_i=A\}=
\{i:F_{\pi^{-1}(i)}=A\}=\pi(\{j:F_j=A\})$ (reindexing $j=\pi^{-1}(i)$), and
since $\pi$ is a bijection, $|\pi(\{j:F_j=A\})|=|\{j:F_j=A\}|$, so
$\alpha_A(G)=\alpha_A(F)$.

Conversely, suppose $\alpha_A(F)=\alpha_A(G)$ for every $A$. For each
$A\subseteq X$ with $\alpha_A(F)=\alpha_A(G)=:m_A>0$, let
$K_A(F)=\{i\in N:F_i=A\}$ and $K_A(G)=\{i\in N:G_i=A\}$, both of size $m_A$;
choose a bijection $\rho_A\colon K_A(F)\to K_A(G)$
(Proposition~\ref{prop:bijcard}). The sets $K_A(F)$, over all $A$, partition
$N$ (every $i$ belongs to exactly one $K_{F_i}(F)$), and likewise the
$K_A(G)$ partition $N$; so the maps $\rho_A$ combine into a single bijection
$\pi'\colon N\to N$ (well defined since the $K_A(F)$ are disjoint and
cover $N$, and each $\rho_A$ is a bijection onto the disjoint, covering
$K_A(G)$). Set $\pi=(\pi')^{-1}$; then $F_{\pi^{-1}(i)}=F_{\pi'(i)}=A$
whenever $i\in K_A(G)$ (since $\pi'$ sends $K_A(F)$ to $K_A(G)$, its inverse
$\pi=(\pi')^{-1}$ restricted to $K_A(G)$ lands in $K_A(F)$, i.e.\ $\pi(i)
\in K_A(F)$ for $i\in K_A(G)$, i.e.\ $F_{\pi(i)}=A=G_i$)... to align
indices correctly with action \eqref{eq:actionFam}, set instead
$\pi:=\pi'$ directly and verify $G_i=F_{\pi^{-1}(i)}$: for $i\in K_A(G)$,
$\pi^{-1}(i)=(\rho_A)^{-1}(i)\in K_A(F)$ (since $\rho_A$ maps $K_A(F)$ onto
$K_A(G)$ bijectively), so $F_{\pi^{-1}(i)}=A=G_i$, as required. Hence
$\pi\cdot F=G$, so $F,G$ lie in the same orbit.

This proves $S_N$-orbits of $n$-families biject with the multiplicity
vectors $(\alpha_A)_{A\subseteq X}$, i.e.\ with nonnegative-integer
solutions of $\sum_A\alpha_A=n$ (any such vector is realized, e.g.\ by
listing $\alpha_A$ copies of $A$ among the $F_i$'s in any order). The
restriction to strict families ($F_i\neq\varnothing$ for all $i$) is
exactly the restriction $\alpha_\varnothing(F)=|\{i:F_i=\varnothing\}|=0$.
\end{proof}

\begin{theorem}\label{thm:nfamSN}
\begin{enumerate}
\item The number of $S_N$-orbits of $n$-families of subsets of $X$ is
$\binom{2^{x}+n-1}{n}$.
\item The number of $S_N$-orbits of strict $n$-families is
$\binom{2^{x}+n-2}{n}$.
\item The number of $S_N$-orbits of $n$-coverings of $X$ is
$\sum_{k=0}^{x}(-1)^{k}\binom xk\binom{2^{x-k}+n-1}{n}$.
\item The number of $S_N$-orbits of hypergraphs of $X$ is
$\sum_{k=0}^{x}(-1)^{k}\binom xk\binom{2^{x-k}+n-2}{n}$.
\end{enumerate}
\end{theorem}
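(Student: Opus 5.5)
Parts~(i) and~(ii) follow from Lemma~\ref{lem:coveringSN} together with stars and bars. By that lemma, the $S_N$-orbits of $n$-families correspond to nonnegative-integer solutions of $\sum_{A\subseteq X}\alpha_A=n$, with $c=2^{x}$ unknowns. Proposition~\ref{prop:starsbars} then gives $\binom{2^x+n-1}{n}$. For strict families the lemma imposes $\alpha_\varnothing=0$, which leaves $c=2^x-1$ unknowns and gives $\binom{2^x+n-2}{n}$.

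For (iii) and (iv) the plan is inclusion--exclusion over the elements of $X$ that are missed. First I will check that being a covering, and being strict, are $S_N$-invariant properties. This holds because $\pi$ only permutes the blocks, so $\bigcup_iF_i$ and the multiset of blocks are unchanged. Consequently the orbits of coverings are exactly those orbits of all families (respectively, of strict families) whose members cover $X$. In terms of multiplicity vectors, $F$ covers $X$ if and only if every $a\in X$ lies in some $A$ with $\alpha_A>0$.

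For $a\in X$, let $E_a$ be the set of orbits (equivalently, multiplicity vectors) in which $\alpha_A=0$ for every $A\ni a$, that is, orbits of families avoiding $a$. I will apply \eqref{eq:incexc-complement} to the set $\Omega$ of all orbits, with the subsets $E_a$. For $I\subseteq X$ with $|I|=k$, the intersection $\bigcap_{a\in I}E_a$ consists of the solutions supported on subsets $A\subseteq X\setminus I$. By Lemma~\ref{lem:coveringSN} applied with $X\setminus I$ in place of $X$, this is the same as counting $S_N$-orbits of $n$-families of an $(x-k)$-set. Part~(i) therefore gives $\binom{2^{x-k}+n-1}{n}$, and grouping the $\binom xk$ choices of $I$ by size yields (iii). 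For (iv) I will run the same argument inside the set of orbits with $\alpha_\varnothing=0$. There, $\bigcap_{a\in I}E_a$ consists of solutions on the $2^{x-k}-1$ nonempty subsets of $X\setminus I$, and part~(ii) gives $\binom{2^{x-k}+n-2}{n}$.

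The main point needing care is the identification $\bigcap_{a\in I}E_a\leftrightarrow\{\text{solutions over }\PS(X\setminus I)\}$. Here I will argue directly on multiplicity vectors: a vector lies in every $E_a$ with $a\in I$ exactly when its support consists of sets disjoint from $I$, that is, subsets of $X\setminus I$. This avoids any issue about orbits being counted by representatives. A second point of care is the degenerate case $k=x$ in (iv). There, $2^{0}-1=0$ unknowns gives $\binom{n-1}{n}$, which equals $[n=0]$ under the convention $\binom{-1}{0}=1$. I will confirm that the formula's binomial is read this way, and that this matches the fact that the only strict family of the empty set is the empty one.
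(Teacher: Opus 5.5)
Your proposal is correct and is essentially the paper's argument. Parts (i)--(ii) are identical (Lemma~\ref{lem:coveringSN} plus stars and bars). For (iii)--(iv), your inclusion--exclusion over the events ``$a$ is missed'' is the same sieve as the paper's approach, which classifies orbits by their exact covered set $B\subseteq X$ and then applies M\"obius inversion on the Boolean lattice.

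Working directly on multiplicity vectors has a small advantage: it bypasses the paper's identification of orbits with covered set exactly $B$ with orbits of coverings of $B$. Your care with the convention $\binom{-1}{0}=1$ is warranted. It is needed when $n=0$, and it is already needed in (ii) at $x=n=0$.
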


\begin{proof}
(i) By Lemma~\ref{lem:coveringSN}, count solutions of $\sum_A\alpha_A=n$
over $c=2^{x}$ colors (the subsets $A\subseteq X$, of which there are
$2^{x}$ by the count in the proof of \eqref{eq:rawB}); by stars-and-bars
(Proposition~\ref{prop:starsbars}) this is $\binom{2^{x}+n-1}{n}$.

(ii) By Lemma~\ref{lem:coveringSN}, strict families correspond to solutions
with $\alpha_\varnothing=0$, i.e.\ over $c=2^{x}-1$ colors (excluding
$\varnothing$); by stars-and-bars this is $\binom{(2^x-1)+n-1}{n}=
\binom{2^x+n-2}{n}$.

(iii) A covering is an $n$-family $F$ with $\bigcup_iF_i=X$. Classify
$S_N$-orbits of $n$-families by their covered set $B:=\bigcup_iF_i
\subseteq X$ (an $S_N$-orbit invariant, by the same computation as in
Theorem~\ref{thm:prop2}'s proof: $\bigcup_iG_i=\bigcup_iF_{\pi^{-1}(i)}
=\bigcup_jF_j$ for $G=\pi\cdot F$). For fixed $B\subseteq X$ of size
$x-k$ (i.e.\ $k=x-|B|$ elements excluded), the $S_N$-orbits of $n$-families
covering exactly $B$ correspond, by restricting all $F_i\subseteq B$ (since
$\bigcup F_i=B$ forces every $F_i\subseteq B$), to $S_N$-orbits of
$n$-\emph{coverings} of the $(x-k)$-element set $B$; write $c(n,x-k)$ for
their count. Every $n$-family covering some subset $B$ of size $x-k$ is
counted once in the total of (i), so summing over which $k=x-|B|$ elements
are excluded (there are $\binom xk$ choices of the excluded $k$-set, hence
of $B$) gives $\binom{2^x+n-1}{n}=\sum_{k=0}^{x}\binom xk\,c(n,x-k)$. This
is hypothesis \eqref{eq:mobius-hyp} of Proposition~\ref{prop:mobius} in
disguise (with the ambient set being a fixed $x$-element index set for the
$k$'s, or more directly: apply Proposition~\ref{prop:mobius} to $g(K)=
\binom{2^{x-|K|}+n-1}n$ ... it is cleaner to invoke
Proposition~\ref{prop:surjmobius}'s pattern directly): fixing $X$ and
defining, for $A\subseteq X$, $\phi(A):=\binom{2^{|A|}+n-1}{n}$ (the count
of (i) applied with $x$ replaced by $|A|$, i.e.\ the number of $S_N$-orbits
of $n$-families of subsets of $A$) and $\gamma(A):=c(n,|A|)$ (the number of
$S_N$-orbits of $n$-coverings of $A$), the argument above shows
$\phi(A)=\sum_{B\subseteq A}\gamma(B)$ for every $A\subseteq X$ (classifying
$n$-families of subsets of $A$ by their covered subset $B\subseteq A$).
By Proposition~\ref{prop:mobius}, $\gamma(X)=\sum_{B\subseteq X}
(-1)^{|X\setminus B|}\phi(B)=\sum_{k=0}^x(-1)^k\binom xk\phi(B_{x-k})$ for
$B_{x-k}$ any fixed $(x-k)$-subset (grouping by $|B|=x-k$), i.e.\
$c(n,x)=\sum_{k=0}^x(-1)^k\binom xk\binom{2^{x-k}+n-1}{n}$, as claimed.

(iv) Identical argument to (iii) but starting from the strict-family count
of (ii) in place of (i): a hypergraph is a strict covering, so the same
M\"obius inversion, with $\phi(A):=\binom{2^{|A|}+n-2}n$ (strict
$n$-families of subsets of $A$, by (ii)) and $\gamma(A):=$ number of
$S_N$-orbits of hypergraphs on $A$, gives
$c'(n,x)=\sum_{k=0}^x(-1)^k\binom xk\binom{2^{x-k}+n-2}n$.
\end{proof}

\section{Coverings and hypergraphs under \texorpdfstring{$S_X$}{S\_X}}

\begin{theorem}\label{thm:nfamSX}
By the transpose duality of Corollary~\textup{\ref{cor:transpose}} applied
to Theorem~\textup{\ref{thm:nfamSN}} (exchanging the roles of $N$ and $X$,
and noting that an $n$-covering of $X$ under $S_X$ corresponds, upon
transposing, to a strict $x$-family of $N$ under $S_N$, by the dual-family
remark following Corollary~\textup{\ref{cor:transpose}}):
\begin{enumerate}
\item the number of $S_X$-orbits of $n$-families of $X$ is
$\binom{2^{n}+x-1}{x}$;
\item the number of $S_X$-orbits of $n$-coverings of $X$ is
$\binom{2^{n}+x-2}{x}$;
\item the number of $S_X$-orbits of strict $n$-families of $X$ is
$\sum_{k=0}^{n}(-1)^{k}\binom nk\binom{2^{n-k}+x-1}{x}$;
\item the number of $S_X$-orbits of strict $n$-coverings (hypergraphs) of
$X$ is $\sum_{k=0}^n(-1)^k\binom nk\binom{2^{n-k}+x-2}{x}$.
\end{enumerate}
\end{theorem}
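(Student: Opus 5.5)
The plan is to reduce everything to Theorem~\ref{thm:nfamSN} via the transpose of Corollary~\ref{cor:transpose}, read through the dual-family bijection $F\mapsto F^{*}$, $F^{*}_a=\{i\in N:a\in F_i\}$. Three facts need checking first. The first is that $F\mapsto F^{*}$ is a bijection from $\mathrm{Fam}_n(X)$ onto $x$-families of subsets of $N$ (indexed by $X$); this is Theorem~\ref{thm:threeway} composed with $R\mapsto R^{-1}$. The second is that it intertwines the $S_X$-action on $n$-families of $X$ with the $S_X$-action on $x$-families of $N$ permuting the \emph{indices}. Concretely, if $G=\sigma\cdot F$, so $G_i=\sigma(F_i)$, then
\[
G^{*}_a=\{i:a\in\sigma(F_i)\}=\{i:\sigma^{-1}(a)\in F_i\}=F^{*}_{\sigma^{-1}(a)},
\]
which is exactly \eqref{eq:actionFam} with the roles of $(N,X)$ and $(\pi,\sigma)$ exchanged. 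The third is that $F$ is a covering iff $F^{*}_a\neq\varnothing$ for all $a$ (that is, iff $F^{*}$ is strict), and $F$ is strict iff $\bigcup_aF^{*}_a=N$ (that is, iff $F^{*}$ is a covering of $N$). Both follow directly from Proposition~\ref{prop:translation}(1),(3) applied on the two sides of the transpose.

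By the orbit-transport argument in the proof of Theorem~\ref{thm:prop2}, an intertwining bijection induces a bijection of orbit sets. Hence each $S_X$-orbit count for a class of $n$-families of $X$ equals the $S_N$-orbit count (with $N$ in the role of the index set, i.e.\ in Theorem~\ref{thm:nfamSN} with $n$ and $x$ swapped) of the dual class of $x$-families of an $n$-set. Reading off the statements then gives:
\begin{enumerate}
\item all families $\leftrightarrow$ all families, so Theorem~\ref{thm:nfamSN}(i) yields $\binom{2^n+x-1}{x}$;
\item coverings $\leftrightarrow$ strict families, so Theorem~\ref{thm:nfamSN}(ii) yields $\binom{2^n+x-2}{x}$;
\item strict families $\leftrightarrow$ coverings, so Theorem~\ref{thm:nfamSN}(iii) yields $\sum_k(-1)^k\binom nk\binom{2^{n-k}+x-1}{x}$;
\item hypergraphs (strict coverings) $\leftrightarrow$ strict coverings, so Theorem~\ref{thm:nfamSN}(iv) yields the last formula.
\end{enumerate}
In each case the substitution $n\leftrightarrow x$ into Theorem~\ref{thm:nfamSN} is purely formal.

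The main obstacle is only bookkeeping, in two places. One is making sure the direction of the action matches. The computation above shows $\sigma$ acts on $F^{*}$ by $a\mapsto\sigma^{-1}(a)$ reindexing, which is precisely the form $(F_{\pi^{-1}(i)})_i$ of \eqref{eq:actionFam}, so no inversion of the group element is needed. The other is keeping straight that covering and strictness swap under duality, which is why items (ii) and (iii) appear crosswise relative to Theorem~\ref{thm:nfamSN}. A sanity check, say $n=1$ in (ii), giving $\binom{x}{x}=1$ orbit (the unique covering $F_1=X$), confirms the assignment.
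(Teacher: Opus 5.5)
Your proposal is correct and takes essentially the same route as the paper: you transport the $S_X$-action on $n$-families of $X$ through the dual-family bijection $F\mapsto F^{*}$ to the index-relabeling action on $x$-families of $N$, note that covering and strict swap under this transpose, and read off Theorem~\ref{thm:nfamSN} with $n$ and $x$ exchanged. Your explicit check $G^{*}_a=F^{*}_{\sigma^{-1}(a)}$ and your appeal to the orbit-transport argument of Theorem~\ref{thm:prop2} make precise what the paper states only informally; the one wording slip is ``$S_N$-orbit count with $N$ in the role of the index set,'' which should say that the index set is $X$, relabeled by $S_X$.
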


\begin{proof}
By Corollary~\ref{cor:transpose}, the number of $S_X$-orbits of
$n$-families of $X$ (an $S_N\times S_X$-invariant statistic evaluated with
the $S_N$-part restricted to the identity) equals the number of
$S_N$-orbits (with $S_X$-part restricted to identity) of the transpose
objects, i.e.\ of $x$-families of $N$ under the action of $S_N$ relabeling
$N$ -- wait, more precisely: transposing exchanges which group acts by
relabeling versus which set is being partitioned. Concretely: an
$n$-family of $X$, i.e.\ a function $N\to\PS(X)$, transposes (via
$R\leftrightarrow R^{-1}$) to an $x$-family of $N$, i.e.\ a function
$X\to\PS(N)$, and the $S_X$-action relabeling the family's \emph{values}
(subsets of $X$) corresponds, after transposing, to the $S_X$-action
relabeling the \emph{index set} of the transposed family; this is exactly
the $S_N$-column count of Theorem~\ref{thm:nfamSN} but with the sizes $n,x$
formally exchanged (since the transposed family is indexed by $X$, of size
$x$, taking values that are subsets of $N$, of size $n$, and it is being
counted up to relabeling its index set, which is now $X$ -- matching
Theorem~\ref{thm:nfamSN}'s $S_N$-orbit count of families of $X$-subsets
with $n$ and $x$ swapped). Applying Theorem~\ref{thm:nfamSN}(i)--(iv) with
$n,x$ interchanged, and translating ``covering'' and ``strict/hypergraph''
through the transpose (a covering of $X$ by an $n$-family transposes to a
\emph{strict} $x$-family of $N$, by the dual-family remark after
Corollary~\ref{cor:transpose}, and a hypergraph transposes to a strict
covering, i.e.\ a hypergraph, of $N$) gives exactly the four stated
formulas, with the covering/strict roles swapped between the two sides as
shown.
\end{proof}

\section{Difference relations (Theorem 8 of the source paper)}

\begin{theorem}[Inductive relations]\label{thm:diffrelations}
Let $b^{r}(n,x)$ denote the number of $S_X$-orbits of $n$-families of $X$
(Theorem~\textup{\ref{thm:nfamSX}}(i)), $b^{lr}(n,x)$ the number of
$S_N\times S_X$-orbits of $n$-families (Theorem~\textup{\ref{thm:orbitcounts}}),
$s_r^{r}(n,x)$ the number of $S_X$-orbits of $n$-coverings
(Theorem~\textup{\ref{thm:nfamSX}}(ii)), and similarly $s_r^{lr},s_{lr}^{r},
s_{lr}^{lr}$ for $S_N\times S_X$-orbits of coverings and for $S_X$-, resp.\
$S_N\times S_X$-orbits of hypergraphs. Then:
\begin{align}
s_r^{r}(n,x)&=b^{r}(n,x)-b^{r}(n,x-1); \label{eq:diff1}\\
s_r^{lr}(n,x)&=b^{lr}(n,x)-b^{lr}(n,x-1); \label{eq:diff2}\\
s_{lr}^{r}(n,x)&=s_r^{r}(n,x)-s_r^{r}(n-1,x); \label{eq:diff3}\\
s_{lr}^{lr}(n,x)&=s_r^{lr}(n,x)-s_r^{lr}(n-1,x); \label{eq:diff4}\\
s_{lr}^{r}(n,x)&=b^{r}(n,x)-\bigl[b^{r}(n,x-1)+b^{r}(n-1,x)\bigr]
+b^{r}(n-1,x-1); \label{eq:diff5}\\
s_{lr}^{lr}(n,x)&=b^{lr}(n,x)-\bigl[b^{lr}(n,x-1)+b^{lr}(n-1,x)\bigr]
+b^{lr}(n-1,x-1). \label{eq:diff6}
\end{align}
\end{theorem}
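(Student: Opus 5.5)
Each of the six identities is an instance of a single device, \emph{classification by an orbit invariant}, applied inside a group action. For \eqref{eq:diff1}--\eqref{eq:diff2}, take an $n$-family $F$ of $X$ and fix a point $a_0\in X$. Whether $F$ covers $X$ depends only on the orbit. What we use, via Theorem~\ref{thm:nfamSX}(i)--(ii), is that a non-covering family has some uncovered element. Since $S_X$ (and $S_N\times S_X$) acts transitively on $X$, every orbit of a non-covering family contains a representative whose uncovered set contains $a_0$, but such a representative is not unique.

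The cleaner route is therefore to use an injection of orbits. Consider the map sending an $n$-family $F'$ of $X':=X\setminus\{a_0\}$ to the same family viewed inside $X$ (with $a_0$ uncovered). I would show that this induces a bijection between $S_{X'}$-orbits of $n$-families of $X'$ and $S_X$-orbits of non-covering $n$-families of $X$, and likewise with $S_N$ adjoined.

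\emph{Surjectivity.} Given a non-covering $F$, choose $\sigma\in S_X$ moving an uncovered point to $a_0$; then $\sigma\cdot F$ lies in the image.

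\emph{Injectivity.} Suppose $\sigma\cdot F=G$ with both $F,G$ avoiding $a_0$. Then $\sigma$ carries the uncovered set $U_F$ onto $U_G$, and both contain $a_0$. Choose $\tau\in S_X$ permuting $U_G$ so that $\tau\sigma(a_0)=a_0$. Since every $G_i\cap U_G=\varnothing$, $\tau$ fixes $G$, so $\tau\sigma$ fixes $a_0$, restricts to an element of $S_{X'}$, and still sends $F$ to $G$. This step is the main obstacle, and the identical argument works with a $\pi\in S_N$ component carried along unchanged.

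Subtracting then gives $s_r^{r}(n,x)=b^{r}(n,x)-b^{r}(n,x-1)$ and $s_r^{lr}(n,x)=b^{lr}(n,x)-b^{lr}(n,x-1)$.

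For \eqref{eq:diff3}--\eqref{eq:diff4}, repeat the argument on the $N$ side among coverings. A covering is non-strict iff some $F_i=\varnothing$. Fix $i_0\in N$. The map sending a covering of $X$ by $N\setminus\{i_0\}$ to the $N$-covering obtained by adjoining $F_{i_0}=\varnothing$ induces a bijection onto orbits of non-strict coverings.
\begin{itemize}
\item For the $S_X$-action, orbits of non-strict coverings are not closed under moving indices. So in the $S_X$ case I would instead identify $S_X$-orbits of $n$-families with multisets via the transpose: by Theorem~\ref{thm:nfamSX}, an $S_X$-orbit of $n$-families is a multiset of $x$ subsets of $N$, and coverings correspond to multisets avoiding $\varnothing$. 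Under this description a family with $F_{i_0}=\varnothing$ is exactly a multiset of subsets of $N$ none of which contains $i_0$. Hence $S_X$-orbits of coverings with $F_{i_0}=\varnothing$ biject with $S_X$-orbits of coverings by $N\setminus\{i_0\}$. However, non-strict coverings include those with $F_j=\varnothing$ for some other $j\neq i_0$, so a direct count fails.
\item The correct argument is to apply the identity to the \emph{labelled} index set: $s_r^r(n,x)-s_{lr}^r(n,x)$ should count the $S_X$-orbits with some empty block. That number is not obviously $s_r^r(n-1,x)$, so I would verify the formulas instead. By Theorem~\ref{thm:nfamSX}(ii),(iv), the difference $\binom{2^n+x-2}{x}-\sum_k(-1)^k\binom nk\binom{2^{n-k}+x-2}{x}$ should be checked against $s_r^r(n-1,x)=\binom{2^{n-1}+x-2}{x}$ by Pascal-type manipulation. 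If that check fails, the identity is meant with the convention of the source paper (counting by the number of nonempty blocks), and I would reinterpret $s_r^r(n-1,x)$ accordingly.
\end{itemize}
For $S_N\times S_X$ the $i_0$-argument works verbatim, since $S_N$ is transitive on indices and the same stabilizer trick applies.

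Finally, \eqref{eq:diff5}--\eqref{eq:diff6} follow formally by substituting \eqref{eq:diff1}--\eqref{eq:diff2} into \eqref{eq:diff3}--\eqref{eq:diff4}, once with $n$ and once with $n-1$.
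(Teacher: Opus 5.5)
Your arguments for \eqref{eq:diff1}, \eqref{eq:diff2}, \eqref{eq:diff4}, and hence \eqref{eq:diff6}, are correct. The stabilizer adjustment is what makes the orbit map injective: you compose with a permutation supported on the uncovered set $U_G$, or on the set of empty indices of $G$. This is a one-step version of the paper's argument. The paper stratifies by the size of the covered set (resp.\ of the support), getting $b^{r}(n,x)=\sum_{k\le x}s_r^{r}(n,k)$, and then telescopes.

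The genuine gap is \eqref{eq:diff3}, and with it \eqref{eq:diff5}. You offer only ``check the formulas, and if that fails reinterpret,'' which is not a proof, and the check does fail. Take $n=2$, $x=1$, $X=\{a\}$. The $2$-coverings are $(\{a\},\varnothing)$, $(\varnothing,\{a\})$ and $(\{a\},\{a\})$, and $S_X$ is trivial, so $s_r^{r}(2,1)=3$ and $s_r^{r}(1,1)=1$. The only hypergraph is $(\{a\},\{a\})$, so $s_{lr}^{r}(2,1)=1\neq 3-1$. Likewise the right side of \eqref{eq:diff5} is $b^{r}(2,1)-b^{r}(2,0)-b^{r}(1,1)+b^{r}(1,0)=4-1-2+1=2\neq1$. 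These numbers agree with Theorem~\ref{thm:nfamSX}(ii),(iv), so \eqref{eq:diff3} and \eqref{eq:diff5} are false as stated.

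The cause is the one you half-identified. In the $S_X$ column the index set $N$ stays labelled. A covering whose support has size $l$ is therefore a hypergraph on an $l$-set \emph{together with} one of $\binom nl$ possible supports, and no group element identifies different supports. You already have the right tool. You correctly matched the $S_X$-invariant class ``$F_{i_0}=\varnothing$'' with $s_r^{r}(n-1,x)$, and its $k$-fold intersections match $s_r^{r}(n-k,x)$. Running inclusion--exclusion over these classes gives the true relation $s_r^{r}(n,x)=\sum_{l=0}^{n}\binom nl s_{lr}^{r}(l,x)$, i.e.\ $s_{lr}^{r}(n,x)=\sum_{k=0}^{n}(-1)^{k}\binom nk s_r^{r}(n-k,x)$. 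This is exactly Theorem~\ref{thm:nfamSX}(iv).

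A single difference in $n$ is valid only when $S_N$ is part of the acting group, as in \eqref{eq:diff4} or in the $S_N$ column (cf.\ \eqref{eq:hx}). The paper's own proof of \eqref{eq:diff3} makes precisely the error of dropping $\binom nl$. It asserts $s_r^{r}(n,x)=\sum_{l} s_{lr}^{r}(l,x)$ by appealing to ``the subgroup of $S_N$ fixing the support,'' although no permutation of $N$ acts in this column.

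So do not reinterpret $s_r^{r}(n-1,x)$; its meaning is fixed by the statement. What can honestly be proved is \eqref{eq:diff1}, \eqref{eq:diff2}, \eqref{eq:diff4} and \eqref{eq:diff6}, together with the binomial relation above in place of \eqref{eq:diff3}.
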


\begin{proof}
\eqref{eq:diff1}: For $k=0,\dots,x$, let $\mathcal C_k\subseteq
\mathrm{Fam}_n(X)$ be the set of $n$-families whose covered set
$\bigcup_iF_i$ has size exactly $k$; this is invariant under $S_X$ (if
$G=\sigma\cdot F$ then $\bigcup_iG_i=\sigma(\bigcup_iF_i)$, of the same
size, by the same computation as in the proof of
Theorem~\ref{thm:nfamSN}(iii) applied with the roles of $\pi,\sigma$
exchanged). Fixing a $k$-subset $A_k\subseteq X$, the $S_X$-orbits
contained in $\mathcal C_k$ are, by an argument identical to
Theorem~\ref{thm:prop2}'s intertwining bijections restricted to the
subgroup $S_{A_k}\le S_X$ stabilizing $A_k$ setwise combined with a
transport argument, in bijection with the $S_{A_k}$-orbits of
$n$-coverings of $A_k$ (an $n$-family with covered set exactly $A_k$, up to
relabeling $X$, is the same data as an $n$-covering of $A_k$, up to
relabeling $A_k$, since any $\sigma\in S_X$ moving one representative
family with covered set $A_k$ to another with covered set $A_k$ must send
$A_k$ to itself, i.e.\ restrict to an element of $S_{A_k}$, and conversely
every element of $S_{A_k}$ extends to $S_X$ by the identity elsewhere).
This count of $S_{A_k}$-orbits of $n$-coverings of $A_k$ is, by definition,
$s_r^r(n,k)$ (it does not depend on which $k$-subset $A_k$ is chosen, since
all $k$-subsets of $X$ are related by an element of $S_X$, transporting
orbit structures identically). Since $\mathcal C_0,\dots,\mathcal C_x$
partition $\mathrm{Fam}_n(X)$ (every family has a covered set of some
definite size $k$) and this partition is $S_X$-invariant, the $S_X$-orbits
of $\mathrm{Fam}_n(X)$ decompose accordingly, giving
$b^{r}(n,x)=\sum_{k=0}^{x}s_r^{r}(n,k)$ by the addition principle. Taking
the difference of this identity at $x$ and at $x-1$ telescopes to
$b^{r}(n,x)-b^{r}(n,x-1)=s_r^{r}(n,x)$, which is \eqref{eq:diff1}.

\eqref{eq:diff2}: Identical argument with the group $S_N\times S_X$ in
place of $S_X$ throughout (the covered-set stratification $\mathcal C_k$ is
also $S_N\times S_X$-invariant, since the $S_N$-part of the action does not
change the covered set at all, by the computation $\bigcup_iG_i=
\bigcup_i F_{\pi^{-1}(i)}=\bigcup_jF_j$ used already in
Lemma~\ref{lem:coveringSN}'s proof), giving $b^{lr}(n,x)=\sum_{k=0}^x
s_r^{lr}(n,k)$ and hence \eqref{eq:diff2} by the same telescoping
difference.

\eqref{eq:diff3}: By the transpose duality (Corollary~\ref{cor:transpose}),
a covering, viewed from the $N$-side, stratifies by the size of its
\emph{support} $\{i:F_i\neq\varnothing\}$ exactly as families stratify by
covered set on the $X$-side; running the identical argument to
\eqref{eq:diff1} with the roles of $N,X$ exchanged (support size in place
of covered-set size, and $S_N$ in place of $S_X$) gives
$s_r^{r}(n,x)=\sum_{l=0}^{n}s_{lr}^{r}(l,x)$ (coverings of $X$ by an
$n$-family, up to $S_X$, stratified by support size $l$, correspond to
hypergraphs of $X$ by an $l$-family up to $S_X$, since restricting to
nonempty entries turns a covering with support size $l$ into a hypergraph
using exactly $l$ indices, and the stabilizer argument transports orbits
identically as before but now for the subgroup of $S_N$ fixing the support
setwise, which does not affect the $S_X$-orbit count on the covering
itself). Telescoping the difference at $n$ and $n-1$ gives
$s_{lr}^{r}(n,x)=s_r^{r}(n,x)-s_r^{r}(n-1,x)$, i.e.\ \eqref{eq:diff3}.

\eqref{eq:diff4}: Identical to \eqref{eq:diff3} with $S_N\times S_X$ in
place of $S_X$ throughout.

\eqref{eq:diff5}--\eqref{eq:diff6}: Apply \eqref{eq:diff1} (resp.\
\eqref{eq:diff2}) and then \eqref{eq:diff3} (resp.\ \eqref{eq:diff4}) in
succession:
\[
s_{lr}^{r}(n,x)=s_r^{r}(n,x)-s_r^{r}(n-1,x)
=\bigl[b^{r}(n,x)-b^{r}(n,x-1)\bigr]-\bigl[b^{r}(n-1,x)-b^{r}(n-1,x-1)
\bigr],
\]
which rearranges to \eqref{eq:diff5}; identically for \eqref{eq:diff6} with
$b^{lr}$ throughout.
\end{proof}

\begin{remark}[Correction to Theorem B of the source paper]
\label{rem:correctionB}
The 1997 source paper's ``Theorem~B'' asserted $\beta(n,x)=\alpha(n,x)-
\alpha(n,x-1)$ where $\alpha$ is the $S_N\times S_X$-orbit count of
\emph{all} $n$-families (i.e.\ our $b^{lr}(n,x)$) and $\beta$ was labeled
the count for \emph{join-unitary} homomorphisms. By
Proposition~\ref{prop:translation} and the surrounding discussion
(Definition~\ref{def:unitary}), join-unitary homomorphisms correspond to
\emph{all} disjunctive families/right-injective relations --- not to
coverings --- since join-unitarity ($\phi(\varnothing)=\varnothing$) is
automatic for every homomorphism arising from a family
(Theorem~\ref{thm:threeway}'s proof) and imposes no extra condition; the
row genuinely singled out by an extra condition on the ``all $n$-families''
count via a single finite difference in $x$ is instead the
\emph{meet-unitary} (covering) row, exactly as \eqref{eq:diff2} shows:
$s_r^{lr}(n,x)=b^{lr}(n,x)-b^{lr}(n,x-1)$, where $s_r^{lr}$ counts coverings
(meet-unitary homomorphisms), not disjunctive families (join-unitary
homomorphisms). We therefore state the corrected result: \emph{if
$\beta(n,x)$ denotes the $S_N\times S_X$-orbit count of \textbf{meet}-unitary
homomorphisms (coverings) and $\gamma(n,x)$ the orbit count of hypergraphs
(both left- and right-surjective, i.e.\ ``fast-growing'' in the source
paper's terminology), then}
\[
\beta(n,x)=\alpha(n,x)-\alpha(n,x-1),\qquad
\gamma(n,x)=\beta(n,x)-\beta(n-1,x),
\]
\emph{which are exactly \eqref{eq:diff2} and \eqref{eq:diff4} above.} This
matches the independent correction proposed in the 2026 companion note
(reproduced and re-derived from scratch in Chapter~\ref{ch:diffcalc}
below), and is confirmed by direct enumeration: at $(n,x)=(1,1)$,
$\alpha(1,0)=b^{lr}(1,0)$ counts the single $S_1\times S_0$-orbit of the
unique ($0$-family, trivially) $1$-family of the empty set, i.e.\
$\alpha(1,0)=1$ (the only $1$-family of $\varnothing$ is $F_1=\varnothing$),
while $\alpha(1,1)=b^{lr}(1,1)=2$ (a single index's block $F_1$ is either
$\varnothing$ or $\{a\}$ for the unique $a\in X$, and these are already
inequivalent since $S_1\times S_1$ is trivial), so $\alpha(1,1)-\alpha(1,0)
=1$; on the other hand the number of $S_1\times S_1$-orbits of
\emph{coverings} of a $1$-element $X$ by a $1$-family is $1$ (the unique
covering $F_1=\{a\}$), matching $\beta(1,1)=1=\alpha(1,1)-\alpha(1,0)$,
whereas the number of orbits of \emph{disjunctive} $1$-families is $2$
(both $F_1=\varnothing$ and $F_1=\{a\}$ are disjunctive, vacuously, and
inequivalent), which does \emph{not} equal $1$: confirming that the
difference identity holds for the covering (meet-unitary) count, not the
disjunctive (join-unitary) count.
\end{remark}

\chapter{The Main Theorem: The Table of Orbit Counts}\label{ch:maintable}

We now assemble Chapters~\ref{ch:relations}--\ref{ch:surjective} into the
single table that is the centerpiece of the enumeration paper, translating
the results back into the language of lattice homomorphisms via
Theorem~\ref{thm:threeway} and Proposition~\ref{prop:translation}.

Recall the six rows correspond to: \emph{all} homomorphisms (arbitrary
relations, $\B$); \emph{join-unitary} homomorphisms, which by
Remark~\ref{rem:correctionB} coincide with \emph{all} homomorphisms as a
row-condition (automatic $\phi(\varnothing)=\varnothing$) --- so this row is
identical in content to the ``all'' row and is retained only for
consistency with the source paper's table layout, where it in fact lists
the \emph{disjunctive-family} count, i.e.\ right-injective relations
$\Ir_r$; \emph{meet-unitary} homomorphisms, i.e.\ coverings $\Sr_r$;
\emph{slow-growing} homomorphisms ($|\phi(I)|\le|I|$ for all $I$), i.e.\
two-sided injective relations $\Ir_{lr}$ (partial injections); \emph{fast
growing} homomorphisms ($|I|\le|\phi(I)|$ for all $I$), i.e.\ two-sided
surjective relations $\Sr_{lr}$ (hypergraphs); and \emph{unitary}
homomorphisms (both $\phi(\varnothing)=\varnothing$ and $\phi(N)=X$), i.e.\
right-bijective relations, i.e.\ functions $N\to X$
(Proposition~\ref{prop:unitarydivisions} identifies these with divisions of
$X$).

\begin{theorem}[Main table]\label{thm:maintable}
The numbers of orbits of $S_N$, $S_X$, and $S_N\times S_X$ on each type of
homomorphism of the Boolean algebra $\PS(N)$ into $\PS(X)$ are given by
Table~\textup{\ref{tab:maintable}}.
\end{theorem}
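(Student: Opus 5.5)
The main table theorem is an assembly statement: each cell of Table~\ref{tab:maintable} is an orbit count already established (or obtainable by a one-line duality) in Chapters~\ref{ch:raw}--\ref{ch:surjective}. So the proof will go row by row and column by column, citing the result that justifies each entry. The first step is to fix the dictionary between rows and relation types, via Theorem~\ref{thm:threeway}, Proposition~\ref{prop:translation} and Definition~\ref{def:unitary}:
\begin{itemize}
\item all homomorphisms correspond to $\B$;
\item the ``join-unitary'' row, read as disjunctive families, corresponds to $\Ir_r$;
\item meet-unitary homomorphisms correspond to coverings $\Sr_r$;
\item slow-growing homomorphisms correspond to $\Ir_{lr}$;
\item fast-growing homomorphisms correspond to $\Sr_{lr}$;
\item unitary homomorphisms correspond to divisions, i.e.\ right-bijective relations (Proposition~\ref{prop:unitarydivisions}).
\end{itemize}

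For the slow- and fast-growing rows I must check that the growth condition matches the two-sided condition. Suppose $|\phi(I)|\le|I|$ for all $I$. Taking $I=\{i\}$ gives $|F_i|\le 1$. Taking $I=\{i,j\}$ with $F_i=F_j=\{a\}$ would still satisfy the bound, so injectivity on the right needs more care. I would therefore take the table's definition of the row as matching $\Ir_{lr}$, following the source paper. The dual statement $|I|\le|\phi(I)|$ with $I=\{i\}$ gives strictness, and $I=N$ gives $|\phi(N)|\ge n$. Theorem~\ref{thm:intertwine} (i.e.\ Theorem~\ref{thm:prop2}) then guarantees that orbit counts agree across the three descriptions.

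The second step fills the columns.
\begin{itemize}
\item The raw column comes from Theorem~\ref{thm:rawcounts}.
\item The $S_N$ column comes from Theorem~\ref{thm:nfamSN}(i),(iii),(iv) for $\B$, $\Sr_r$ and $\Sr_{lr}$; Theorem~\ref{thm:disjSNstirling} for $\Ir_r$; and \eqref{eq:ilrl} for $\Ir_{lr}$.
\item The $S_X$ column comes from Theorem~\ref{thm:nfamSX}(i),(ii),(iv); Theorem~\ref{thm:disjSXcount}; and \eqref{eq:ilrr}.
\item The $S_N\times S_X$ column comes from \eqref{eq:blrformula} for $\B$, \eqref{eq:diff2} and \eqref{eq:diff6} for coverings and hypergraphs, and \eqref{eq:ilrlr} for partial injections.
\end{itemize}
For the disjunctive row under $S_N\times S_X$, I would combine Lemma~\ref{lem:disjSNgeneral} with the $S_X$-action. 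An orbit is then determined by the multiset of block sizes of $\mathcal A$ together with $|A|$, i.e.\ by a partition of some $k\le x$ into at most $n$ parts, giving $\sum_{k\le x}p_{\le n}(k)$. This is proved by the same matching argument as Lemma~\ref{lem:disjSX}.

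The unitary row needs its own check. Divisions are disjunctive coverings, so the argument restricts Lemma~\ref{lem:disjSX}, Lemma~\ref{lem:disjSNgeneral} and the combined count to $A=X$. This gives:
\begin{itemize}
\item $\binom{n+x-1}{x}$ under $S_X$, by stars and bars with equality;
\item $\sum_{l\le n}\stir xl$ under $S_N$;
\item $p_{\le n}(x)$ under both groups.
\end{itemize}

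I expect the main obstacle to be the consistency of row labels rather than any computation. The join-unitary row and the slow/fast-growing conditions must be matched exactly to the types whose counts were proved, in light of Remark~\ref{rem:correctionB}, and any cell not literally stated earlier (the disjunctive and unitary $S_N\times S_X$ entries) must be derived by a short orbit-invariant argument. I would handle that argument first, by showing the block-size multiset is a complete invariant, and then verify a few small cases such as $(n,x)=(1,1)$ and $(2,1)$ against the table as a sanity check.
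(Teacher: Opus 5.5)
Your plan is the same row-by-row citation assembly as the paper's proof, and nearly all your citations are the right ones. However, two of your steps do not establish what Table~\ref{tab:maintable} actually prints, and no better citation can repair either of them.

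First, for the All row you say the $S_N\times S_X$ entry comes from \eqref{eq:blrformula}. That is the correct count, but the printed cell is not \eqref{eq:blrformula}. It repeats the Row~2 formula $\sum_{k=0}^{x}\sum_{l=0}^{n}p_l(k)$, which counts only disjunctive families. Take $(n,x)=(2,1)$. The $2$-families of $X=\{a\}$ fall into three $S_2\times S_1$-orbits, represented by $(\varnothing,\varnothing)$, $(\varnothing,\{a\})$ and $(\{a\},\{a\})$; this agrees with $\alpha(2,1)=3$ in Example~\ref{ex:n2rational}. The printed cell gives $p_0(0)+p_1(1)=2$, and the missing orbit is the non-disjunctive family $(\{a\},\{a\})$. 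So that cell is false as stated, and your proof must say that it should read \eqref{eq:blrformula}. The paper's proof also cites \eqref{eq:blrformula} and passes over the mismatch. The $(2,1)$ sanity check you propose would have exposed it, so run it before asserting the table, not after.

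Second, the growth-condition rows. You rightly notice that $|\phi(I)|\le|I|$ does not force disjointness. But ``taking the table's definition of the row as matching $\Ir_{lr}$'' is not a proof step, and the discrepancy is not a matter of more care. Since $|\bigcup_{i\in I}F_i|\le\sum_{i\in I}|F_i|$, the condition $|\phi(I)|\le|I|$ for all $I$ is \emph{equivalent} to $|F_i|\le1$ for all $i$, i.e.\ to left injectivity alone. The literal slow-growing homomorphisms therefore number $(1+x)^n$, not \eqref{eq:rawIlr}; at $(2,1)$ all four families qualify, against the table's $3$. Similarly, $|I|\le|\phi(I)|$ for all $I$ is Hall's condition for a system of distinct representatives, not ``strict and covering''. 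At $(2,1)$ nothing satisfies it (take $I=N$), whereas \eqref{eq:rawSlr} gives $1$, namely $F_1=F_2=\{a\}$. Rows~4 and~5 are correct only if ``slow-growing'' and ``fast growing'' are \emph{defined} to mean two-sided injective and two-sided surjective. You already reinterpret the join-unitary row as $\Ir_r$ in this way; state this convention explicitly too, instead of deriving it from the growth inequalities.

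Finally, the unitary raw entry $n^x$ is not part of Theorem~\ref{thm:rawcounts}. It needs the one-line count of right-bijective relations as functions $X\to N$, as in the paper.
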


\begin{proof}
Row 1 (all): raw count \eqref{eq:rawB}; $S_N$-column \eqref{eq:blformula};
$S_X$-column dual to \eqref{eq:blformula} by Corollary~\ref{cor:transpose};
$S_N\times S_X$-column \eqref{eq:blrformula}.

Row 2 (join-unitary $=$ disjunctive families $\Ir_r$): raw count
\eqref{eq:rawIr}; $S_N$-column Theorem~\ref{thm:disjSNstirling}; $S_X$-column
Theorem~\ref{thm:disjSXcount}; $S_N\times S_X$-column: by
Lemma~\ref{lem:disjSNgeneral} combined with the $S_X$-action on the
partition-part exactly as in the proof of
Lemma~\ref{lem:smalldisjcorr}(iii) but for general (not necessarily small)
partitions, the joint orbit is determined by $(k,\text{partition-orbit of
}\mathcal A\text{ under }S_N\times S_{A_k})$; summing the count of Stirling
partition-orbits gives $\sum_{k=0}^x\sum_{l=0}^n p_l(k)$ where $p_l(k)$ is
the number of partitions of the integer $k$ into $l$ parts (this refines
Theorem~\ref{thm:disjSNstirling}'s Stirling-number count by additionally
quotienting the partition of the $k$-set $A_k$ by the action of $S_{A_k}$,
which by the classical bijection between set-partitions-up-to-relabeling
and integer-partitions turns a count of $\stir kl$ set-partitions into a
count of $p_l(k)$ integer-partitions of $k$ into exactly $l$ parts, since
relabeling the elements of $A_k$ arbitrarily identifies all set-partitions
with the same multiset of block sizes, and the block-size multiset of an
$l$-block partition of a $k$-set is exactly an integer partition of $k$
into $l$ parts).

Row 3 (meet-unitary $=$ coverings $\Sr_r$): raw count \eqref{eq:rawSr};
$S_N$-column Theorem~\ref{thm:nfamSN}(iii); $S_X$-column
Theorem~\ref{thm:nfamSX}(ii); $S_N\times S_X$-column obtained from
\eqref{eq:diff2} of Theorem~\ref{thm:diffrelations} together with the
$S_N\times S_X$-column of Row~1.

Row 4 (slow-growing $=$ partial injections $\Ir_{lr}$): raw count
\eqref{eq:rawIlr}; all three columns Theorem~\ref{thm:partialinjorbit}.

Row 5 (fast growing $=$ hypergraphs $\Sr_{lr}$): raw count
\eqref{eq:rawSlr}; $S_N$-column Theorem~\ref{thm:nfamSN}(iv); $S_X$-column
Theorem~\ref{thm:nfamSX}(iv); $S_N\times S_X$-column
\eqref{eq:diff4} of Theorem~\ref{thm:diffrelations}.

Row 6 (unitary $=$ functions $N\to X$): raw count $n^{x}$ (immediate: a
function $N\to X$ assigns each of the $x$ elements... in fact a function
\emph{from} $N$ \emph{to} $X$ assigns each of the $n$ elements of $N$ one of
$x$ values, giving $x^{n}$ raw functions if $N\to X$; but Definition
\ref{def:unitary} identifies unitary homomorphisms with divisions of $X$,
i.e.\ functions $X\to N$ by Proposition~\ref{prop:unitarydivisions}
combined with the identification of right-bijective relations as graphs of
functions $N\to X$ -- reconciling this: a right-bijective relation $R$ is
literally the graph of a function $N\to X$ by
Proposition~\ref{prop:translation}'s final clause, giving raw count $x^n$;
the source paper's table lists this row's raw count as $n^{x}$, which
matches instead the count of functions $X\to N$, consistent with reading
the ``unitary'' row, via the dual-family remark after
Corollary~\ref{cor:transpose}, as functions $X\to N$, i.e.\ divisions of
$X$ indexed by $N$ exactly as Definition~\ref{def:unitary} intends: a
division of $X$ is an $n$-family that is simultaneously disjunctive and
covering, i.e.\ each of the $x$ elements of $X$ lies in exactly one block
$F_i$, which is exactly the data of a function $X\to N$, $a\mapsto$ the
unique $i$ with $a\in F_i$; there are $n^{x}$ such functions by the
multiplication principle, matching the source table); $S_N$-column: by
Lemma~\ref{lem:smalldisjcorr}-type reasoning applied to functions $X\to N$
up to relabeling $N$, i.e.\ classifying by the partition of $X$ into (at
most $n$, now exactly counted with an upper bound) fiber-blocks, giving
$\sum_{k=0}^{n}\stir xk$ (partitions of $X$ into at most $n$ blocks, matching
the source paper's Theorem~A entry, itself a direct instance of
Theorem~\ref{thm:disjSNstirling}'s method specialized to $A=X$, i.e.\ $k=x$
forced, giving $\sum_{l=0}^n\stir xl$); $S_X$-column: functions $X\to N$ up
to relabeling $X$ correspond to their fiber-size multiset, i.e.\ to weak
compositions of $x$ into $n$ parts up to no further symmetry other than
relabeling which is already used up choosing the function -- concretely, an
$S_X$-orbit of functions $X\to N$ is determined by the tuple of fiber sizes
$(|f^{-1}(1)|,\dots,|f^{-1}(n)|)$ summing to $x$, giving
$\binom{n+x-1}{x}$ by stars-and-bars (Proposition~\ref{prop:starsbars}, with
$c=n$ colors and total $x$), matching Row~6's raw-count analogue
Theorem~\ref{thm:nfamSX}(i) specialized appropriately; $S_N\times S_X$-column:
combining both symmetries, the orbit is determined by the partition of $x$
into at most $n$ parts (the multiset of nonzero fiber sizes), giving
$\sum_{k=0}^{n}p_k(x)$ summed appropriately, i.e.\ the number of partitions
of the integer $x$ into at most $n$ parts, matching the source table's
final entry $\sum_{k=0}^{n}p_k(x)$.
\end{proof}

\begin{table}[ht]
\centering
\renewcommand{\arraystretch}{2.1}
\resizebox{\textwidth}{!}{%
\begin{tabular}{@{}l|c|c|c|c@{}}
\toprule
Type & None & $S_N$ & $S_X$ & $S_N\times S_X$\\
\midrule
All &
$2^{nx}$ &
$\displaystyle\sum_{\substack{a_1+2a_2+\cdots\\ \cdots+na_n=n}}
\!\!\Bigl(\prod_dd^{a_d}a_d!\Bigr)^{-1}2^{(\sum a_d)x}$ &
(dual) &
$\displaystyle\sum_{k=0}^{x}\sum_{l=0}^{n}p_l(k)$\rlap{\ (see Row 2)}\\
Join-unitary &
$(1+n)^{x}$ &
$\displaystyle\sum_{k=0}^{x}\sum_{l=0}^{n}\binom xk\stir kl$ &
$\displaystyle\binom{n+x}{x}$ &
$\displaystyle\sum_{k=0}^{x}\sum_{l=0}^{n}p_l(k)$\\
Meet-unitary &
$(2^{n}-1)^{x}$ &
$\displaystyle\sum_{k=0}^{x}(-1)^{k}\binom xk\binom{2^{x-k}+n-1}{n}$ &
$\displaystyle\binom{2^{n}+x-2}{x}$ &
(via \eqref{eq:diff2})\\
Slow-growing &
$\displaystyle\sum_{k=0}^{x}\binom nk\ffall xk$ &
$\displaystyle\sum_{k=0}^{n}\binom xk$ &
$\displaystyle\sum_{k=0}^{x}\binom nk$ &
$1+\min\{n,x\}$\\
Fast growing &
$\displaystyle\sum_{k=0}^{n}(-1)^{k}\binom nk(2^{n-k}-1)^{x}$ &
$\displaystyle\sum_{k=0}^{x}(-1)^{k}\binom xk\binom{2^{x-k}+n-2}{n}$ &
$\displaystyle\sum_{k=0}^n(-1)^k\binom nk\binom{2^{n-k}+x-2}{x}$ &
(via \eqref{eq:diff4})\\
Unitary &
$n^{x}$ &
$\displaystyle\sum_{k=0}^{n}\stir xk$ &
$\displaystyle\binom{n+x-1}{x}$ &
$\displaystyle\sum_{k=0}^{n}p_k(x)$\\
\bottomrule
\end{tabular}}
\caption{The main table: orbit counts of $S_N,S_X,S_N\times S_X$ on the six
types of homomorphism $\PS(N)\to\PS(X)$. Here $\stir\cdot\cdot$ is the
Stirling number of the second kind (Chapter~\ref{ch:stirling}), $\ffall
xk=x(x-1)\cdots(x-k+1)$ the falling factorial, and $p_l(k)$ the number of
partitions of the integer $k$ into exactly $l$ parts.}
\label{tab:maintable}
\end{table}

\begin{remark}
As the proof shows, the two entries marked ``via \eqref{eq:diff2}'' and
``via \eqref{eq:diff4}'' are not independent closed formulas in the source
paper (nor here): they are obtained from the ``All'' column by the finite
differences of Theorem~\ref{thm:diffrelations}, corrected as in
Remark~\ref{rem:correctionB}. Chapter~\ref{ch:diffcalc} in Part~II will
re-derive this same relationship transparently, as multiplication of
generating functions by $(1-w)$ and $(1-z)(1-w)$.
\end{remark}

\part{Generating Functions for the Table}

This part repackages every count of Part~I as the coefficient of a
generating function, following the 2026 companion note. Throughout,
$z$ marks the parameter $n$ (so a formula in $z$ generates a sequence
indexed by $n$) and $w$ marks $x$; bivariate series in $z,w$ generate the
two-parameter arrays of Part~I. All generating-function identities are
formal power series identities in the sense of Chapter~\ref{ch:gf}.

\chapter{The One-Sided Columns: One Geometric Factor}\label{ch:onesided}

\begin{theorem}[Generating functions of the $S_N$ column]\label{thm:onesidedSN}
Fix $x\ge0$ and let $f_x(z),s_x(z),c_x(z),h_x(z)$ be the OGFs, in $z$
marking $n$, of the $S_N$-orbit counts of, respectively, all $n$-families,
strict $n$-families, $n$-coverings, and hypergraphs of $X$
(Theorem~\textup{\ref{thm:nfamSN}}). Then
\begin{align}
f_x(z)&=(1-z)^{-2^{x}}, \label{eq:fx}\\
s_x(z)&=(1-z)f_x(z)=(1-z)^{-(2^{x}-1)}, \label{eq:sx}\\
c_x(z)&=\sum_{k=0}^{x}(-1)^{k}\binom xk(1-z)^{-2^{x-k}}, \label{eq:cx}\\
h_x(z)&=(1-z)c_x(z)=\sum_{k=0}^{x}(-1)^{k}\binom xk(1-z)^{-(2^{x-k}-1)}.
\label{eq:hx}
\end{align}
\end{theorem}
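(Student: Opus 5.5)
The plan is to read each of the four generating functions off the closed formulas of Theorem~\ref{thm:nfamSN}, using the multiset identity \eqref{eq:geomC} of Corollary~\ref{cor:multiset-ogf} as the only analytic input. Everything happens coefficientwise in the ring of formal power series in $z$, with $x$ fixed.

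For \eqref{eq:fx}, Theorem~\ref{thm:nfamSN}(i) gives the $n$-th coefficient as $\binom{2^x+n-1}{n}$. This is exactly the coefficient of $z^n$ in \eqref{eq:geomC} with $c=2^x$ colors, so $f_x(z)=(1-z)^{-2^x}$. For \eqref{eq:sx}, Theorem~\ref{thm:nfamSN}(ii) gives $\binom{(2^x-1)+n-1}{n}$. This is \eqref{eq:geomC} with $c=2^x-1$, which yields $(1-z)^{-(2^x-1)}$. Since $(1-z)^{-2^x}$ and $(1-z)^{-(2^x-1)}$ differ by the factor $1-z$, the middle equality $s_x=(1-z)f_x$ follows.

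One edge case needs a line: for $x=0$ we have $c=0$. Here the series is $1$, and $\binom{n-1}{n}=[n=0]$ under the convention $\binom{-1}{0}=1$. This convention is consistent with there being exactly one strict $0$-family and none for $n\ge1$. I would check it directly rather than rely on the stars-and-bars statement, which presupposes $c\ge1$.

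For \eqref{eq:cx}, Theorem~\ref{thm:nfamSN}(iii) expresses the coefficient as a finite alternating sum, over $k=0,\dots,x$, of $\binom xk$ times $\binom{2^{x-k}+n-1}{n}$. The key step is to interchange the finite sum over $k$ with the formal sum over $n$. This is legitimate because the outer sum is finite and addition of formal series is coefficientwise. Applying \eqref{eq:geomC} with $c=2^{x-k}$ to each term then gives $(1-z)^{-2^{x-k}}$. Equation \eqref{eq:hx} follows in the same way from Theorem~\ref{thm:nfamSN}(iv), using $c=2^{x-k}-1$. The factorization $h_x=(1-z)c_x$ holds termwise, because each summand $(1-z)^{-2^{x-k}}$ multiplied by $1-z$ becomes $(1-z)^{-(2^{x-k}-1)}$.

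Two points need care. The first is the $k=x$ term, where $c=2^0-1=0$. That term contributes the constant series $1$, so the coefficient identity must again be checked using the convention $\binom{n-1}{n}=[n=0]$. I would verify that this agrees with Theorem~\ref{thm:nfamSN}(iv) as literally stated, including the value $n=0$. The second point is the main conceptual one: the identity $(1-z)\cdot(1-z)^{-c}=(1-z)^{-(c-1)}$ as formal series. It follows from the fact that $(1-z)^{-1}$ is the two-sided inverse of $1-z$ (Proposition~\ref{prop:geomseries}), so negative powers multiply by exponent addition. Beyond this, the argument is bookkeeping.
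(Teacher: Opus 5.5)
Your proposal is correct and takes essentially the same route as the paper: you read each coefficient off Theorem~\ref{thm:nfamSN}, match it against Corollary~\ref{cor:multiset-ogf}, and then obtain the $(1-z)$ factors termwise. Your explicit treatment of the $c=0$ case, via the convention $\binom{-1}{0}=1$ so that $\binom{n-1}{n}=[n=0]$, covers a point the paper passes over silently; the paper, for its part, also sketches a combinatorial reason for $s_x=(1-z)f_x$ and $h_x=(1-z)c_x$ by splitting off the indices that carry $\varnothing$.
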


\begin{proof}
\eqref{eq:fx}: By Theorem~\ref{thm:nfamSN}(i), the coefficient of $z^{n}$ in
$f_x(z)$ must be $\binom{2^{x}+n-1}{n}$; by Corollary~\ref{cor:multiset-ogf}
(with $c=2^{x}$), this is exactly the coefficient of $z^{n}$ in
$(1-z)^{-2^{x}}$, so the two series agree coefficient by coefficient, i.e.\
$f_x(z)=(1-z)^{-2^{x}}$.

\eqref{eq:sx}: Similarly, by Theorem~\ref{thm:nfamSN}(ii), the coefficient
of $z^n$ in $s_x(z)$ is $\binom{2^x+n-2}{n}$, matching
Corollary~\ref{cor:multiset-ogf} with $c=2^{x}-1$, i.e.\
$s_x(z)=(1-z)^{-(2^{x}-1)}=(1-z)\cdot(1-z)^{-2^{x}}=(1-z)f_x(z)$.
Combinatorially: by Lemma~\ref{lem:coveringSN}, an $S_N$-orbit of an
$n$-family corresponds to a multiset of size $n$ from the $2^{x}$ colors
$A\subseteq X$; separating out the multiplicity $j\ge0$ of the color
$\varnothing$ leaves a multiset of size $n-j$ from the remaining $2^{x}-1$
nonempty colors (a strict family on $n-j$ indices); by the OGF product rule
(Chapter~\ref{ch:gf}), since the choice of $j$ (with OGF
$\sum_j z^{j}=1/(1-z)$, i.e.\ an arbitrary multiplicity of a single color)
and the choice of the strict part (with OGF $s_x(z)$) combine
independently and additively in $n$, $f_x(z)=\frac1{1-z}s_x(z)$, i.e.\
$s_x(z)=(1-z)f_x(z)$, matching \eqref{eq:sx}.

\eqref{eq:cx}: By Theorem~\ref{thm:nfamSN}(iii), the coefficient of $z^{n}$
in $c_x(z)$ is $\sum_{k=0}^{x}(-1)^{k}\binom xk\binom{2^{x-k}+n-1}{n}$; by
Corollary~\ref{cor:multiset-ogf}, $\binom{2^{x-k}+n-1}{n}$ is the
coefficient of $z^n$ in $(1-z)^{-2^{x-k}}$, so summing over $k$ with the
same coefficients $(-1)^{k}\binom xk$ term by term gives exactly
\eqref{eq:cx} (two power series with equal coefficients at every $z^n$ are
equal).

\eqref{eq:hx}: identical reasoning from Theorem~\ref{thm:nfamSN}(iv), or
directly $h_x(z)=(1-z)c_x(z)$ by the same strict/all decomposition
argument as in \eqref{eq:sx}, applied inside each covered subset (a
hypergraph is a strict covering, so separating the multiplicity of
$\varnothing$ among indices \emph{not} used to help cover $X$ --- but note
every index in a covering could a priori be $\varnothing$ only if some
\emph{other} index still covers $X$; the precise correspondence is: an
$n$-family covering $X$ decomposes as ($j\ge0$ many $\varnothing$-indices)
$\sqcup$ (a hypergraph on the remaining $n-j$ indices, still covering all of
$X$), giving $c_x(z)=\frac{1}{1-z}h_x(z)$ by the OGF product rule exactly as
before, i.e.\ $h_x(z)=(1-z)c_x(z)$).
\end{proof}

\begin{remark}[Finite-difference interpretation]\label{rem:onesideddiff}
Let $g(y)=(1-z)^{-2^{y}}$ (a function of a real or integer parameter $y$,
with $z$ still a formal variable). Then \eqref{eq:cx} reads $c_x(z)=
(\Delta^{x}g)(0)$, the $x$-th forward finite difference of $g$ at $0$,
where $\Delta g(y):=g(y+1)-g(y)$ and $\Delta^{x}$ is $x$-fold iteration:
indeed, by repeated application of the binomial expansion of the forward
difference operator (itself a direct consequence of the binomial theorem,
Proposition~\ref{prop:binomthm}, applied to the identity $\Delta^x g(0)=
\sum_{k=0}^x(-1)^{x-k}\binom xk g(k)$, obtainable by induction on $x$ using
$\Delta^{x}g=\Delta^{x-1}g(\cdot+1)-\Delta^{x-1}g(\cdot)$ and the Pascal
recurrence for the resulting binomial coefficients, entirely analogously to
the proof of the binomial theorem itself), $(\Delta^xg)(0)=\sum_{k=0}^x
(-1)^{x-k}\binom xkg(k)$; reindexing $k\mapsto x-k$ recovers exactly
\eqref{eq:cx} (using $\binom x{x-k}=\binom xk$). Thus, at the level of orbit
generating functions exactly as at the level of raw counts (where
Proposition~\ref{prop:surjmobius} exhibits surjections as an
inclusion--exclusion/difference of all functions), \emph{passing from
``all'' to ``covering'' applies a finite-difference operator in the covered
parameter, and passing from ``all'' to ``strict'' multiplies the generating
function by $(1-z)$}.
\end{remark}

\begin{theorem}[Generating functions of the $S_X$ column]\label{thm:onesidedSX}
Let $n\ge0$ be fixed and $w$ mark $x$. The $S_X$-orbit counts of all
$n$-families and of $n$-coverings of $X$ (Theorem~\textup{\ref{thm:nfamSX}}
(i),(ii)) have OGFs
\begin{equation}\label{eq:SXcolumn}
\sum_{x\ge0}\binom{2^{n}+x-1}{x}w^{x}=(1-w)^{-2^{n}},\qquad
\sum_{x\ge0}\binom{2^{n}+x-2}{x}w^{x}=(1-w)^{-(2^{n}-1)}.
\end{equation}
\end{theorem}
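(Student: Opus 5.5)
The plan is to reproduce the proof of Theorem~\ref{thm:onesidedSN}, \eqref{eq:fx}--\eqref{eq:sx}, with the roles of $z,n$ and $w,x$ exchanged. First, I would read off the coefficients. By Theorem~\ref{thm:nfamSX}(i), the coefficient of $w^{x}$ in the first series is $\binom{2^{n}+x-1}{x}$. By Theorem~\ref{thm:nfamSX}(ii), the coefficient of $w^x$ in the second series is $\binom{2^{n}+x-2}{x}$. Then I would apply Corollary~\ref{cor:multiset-ogf} with the formal variable $w$ in place of $z$, size parameter $x$ in place of $n$, and number of colors $c=2^{n}$ (respectively $c=2^{n}-1$). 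This gives $(1-w)^{-c}=\sum_{x\ge0}\binom{c+x-1}{x}w^{x}$. With $c=2^n$ the coefficient is exactly $\binom{2^n+x-1}{x}$. With $c=2^n-1$ it is $\binom{2^n-1+x-1}{x}=\binom{2^n+x-2}{x}$. Two formal power series with equal coefficients are equal, so both identities in \eqref{eq:SXcolumn} follow.

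I would also give the combinatorial reading, parallel to the proof of \eqref{eq:sx}, since it explains the factor $(1-w)$. Under the transpose (Corollary~\ref{cor:transpose} and the dual-family remark), an $S_X$-orbit of $n$-families of $X$ becomes an $S_X$-orbit of $x$-families of $N$. By Lemma~\ref{lem:coveringSN} with $N,X$ swapped, such an orbit is a multiset of size $x$ drawn from the $2^{n}$ colors $B\subseteq N$. Coverings of $X$ transpose to strict $x$-families of $N$, i.e.\ to multisets avoiding the color $\varnothing$. Splitting off the multiplicity of $\varnothing$, which has OGF $1/(1-w)$, and using the OGF product rule shows that the second series is $(1-w)$ times the first, consistent with the exponents $-2^n$ and $-(2^n-1)$.

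The only delicate point is the degenerate case $n=0$, where $c=2^0-1=0$. Here the statement needs $(1-w)^{0}=1$, i.e.\ $\binom{x-1}{x}=[x=0]$: the empty family covers $X$ only if $X=\varnothing$. Corollary~\ref{cor:multiset-ogf} at $c=0$ is just the empty product, and Proposition~\ref{prop:starsbars} for $c=0$ gives one solution when the total is $0$ and none otherwise. Still, the binomial $\binom{-1}{0}$ must be read as $1$ under the convention of Proposition~\ref{prop:binomformula}, so I would check this boundary case separately and directly. Apart from that, the proof is a routine coefficient match.
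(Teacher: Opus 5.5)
Your proof is correct and is essentially the paper's: the paper likewise just matches the coefficients from Theorem~\ref{thm:nfamSX}(i),(ii) against Corollary~\ref{cor:multiset-ogf} with $c=2^{n}$ and $c=2^{n}-1$. Two of your additions are sound but not in the paper: the transpose-based explanation of the factor $(1-w)$, and the separate check of the boundary case $n=0$, where $\binom{-1}{0}$ must be read as $1$.
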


\begin{proof}
Both are direct instances of Corollary~\ref{cor:multiset-ogf}, with $c=2^n$
and $c=2^n-1$ respectively, matching Theorem~\ref{thm:nfamSX}(i),(ii)
coefficient by coefficient.
\end{proof}

\begin{corollary}[Pascal-type recurrences on the $S_N$ column]
\label{cor:pascalrecurrences}
Writing $\sigma(n,x)=\binom{2^x+n-2}{n}$ (the strict-family $S_N$-orbit
count) and $\phi(n,x)=\binom{2^x+n-1}{n}$ (the all-family $S_N$-orbit
count), $\sigma(n,x)=\phi(n,x)-\phi(n-1,x)$.
\end{corollary}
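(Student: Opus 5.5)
The plan is to read the identity off the generating-function relation \eqref{eq:sx} of Theorem~\ref{thm:onesidedSN}, namely $s_x(z)=(1-z)f_x(z)$, by comparing coefficients of $z^n$. By Theorem~\ref{thm:nfamSN}(i),(ii) (equivalently Corollary~\ref{cor:multiset-ogf} with $c=2^x$ and $c=2^x-1$), the coefficient of $z^n$ in $f_x(z)$ is $\phi(n,x)$ and in $s_x(z)$ is $\sigma(n,x)$. Multiplying a power series by $1-z$ replaces its $n$-th coefficient $a_n$ by $a_n-a_{n-1}$, by the Cauchy product rule of Chapter~\ref{ch:gf}, with the convention $a_{-1}=0$. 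Hence $\sigma(n,x)=\phi(n,x)-\phi(n-1,x)$ for all $n\ge0$, where $\phi(-1,x):=0$. In the case $n=0$ both sides equal $1$.

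As an independent check, I would also give a direct argument. This is just Pascal's rule (Proposition~\ref{prop:pascal}) with top entry $2^x+n-1$ and bottom entry $n$:
\[
\binom{2^x+n-1}{n}=\binom{2^x+n-2}{n-1}+\binom{2^x+n-2}{n}.
\]
Rearranged, and since $\binom{2^x+n-2}{n-1}=\phi(n-1,x)$, this is the claim. There is also a combinatorial reading via Lemma~\ref{lem:coveringSN}. Multisets of size $n$ over the $2^x$ colors either avoid the color $\varnothing$, giving the strict ones counted by $\sigma$, or contain it at least once. Removing one copy of $\varnothing$ puts the second class in bijection with all multisets of size $n-1$.

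The only point requiring care is the boundary. Pascal's rule needs top entry $\ge1$ and $n\ge1$. The value $x=0$ gives top entry $n$, which is fine for $n\ge1$. For $n=0$ the identity must be read with $\phi(-1,x)=0$, and the generating-function route handles this automatically, so I would lead with that route. I would also note the degenerate case $x=0$, $n\ge1$: here $\sigma(n,0)=\binom{n-1}{n}=0$, which matches the fact that there are no strict families of subsets of the empty set.
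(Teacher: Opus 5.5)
Your proposal is correct, and your lead argument is exactly the paper's proof: extract the coefficient of $z^n$ from $s_x(z)=(1-z)f_x(z)$ in \eqref{eq:sx}, using that multiplication by $1-z$ turns a coefficient sequence $(a_n)$ into $(a_n-a_{n-1})$. The Pascal's-rule and multiset checks you add are valid supplements that the paper does not include, and your boundary remarks for $n=0$ and $x=0$ are accurate.
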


\begin{proof}
By \eqref{eq:sx}, $s_x(z)=(1-z)f_x(z)$; extracting the coefficient of
$z^{n}$ on both sides, using that multiplying a power series by $(1-z)$
sends coefficient sequence $(\phi(n,x))_n$ to $(\phi(n,x)-\phi(n-1,x))_n$
(directly from the Cauchy product definition of multiplication of formal
power series, Chapter~\ref{ch:gf}: the coefficient of $z^n$ in
$(1-z)\sum_m\phi(m,x)z^m$ is $\phi(n,x)-\phi(n-1,x)$, the two terms coming
from the ``$1$'' and ``$-z$'' parts of the product), gives
$\sigma(n,x)=\phi(n,x)-\phi(n-1,x)$.
\end{proof}

\chapter{The Row of Partial Injections}\label{ch:partialgf}

Recall (Theorem~\ref{thm:rawcounts}, \eqref{eq:rawIlr}) that partial
injections between $N$ and $X$ number
$i(n,x)=\sum_k\binom nk\binom xk k!$, and (Theorem~\ref{thm:partialinjorbit})
their orbit counts under $S_N$, $S_X$, $S_N\times S_X$ are
$i^{l}(n,x)=\sum_{k=0}^{\min\{n,x\}}\binom xk$,
$i^{r}(n,x)=\sum_{k=0}^{\min\{n,x\}}\binom nk$, and $1+\min\{n,x\}$.

\begin{theorem}[Generating functions of the partial-injection row]
\label{thm:partialgf}
\begin{align}
\sum_{n,x\ge0}i(n,x)\,\frac{z^{n}}{n!}\frac{w^{x}}{x!}&=e^{z+w+zw};
\label{eq:ezw}\\
\sum_{n,x\ge0}i^{l}(n,x)\,z^{n}w^{x}&=\frac1{(1-z)(1-w-zw)};
\label{eq:ilgf}\\
\sum_{n,x\ge0}i^{r}(n,x)\,z^{n}w^{x}&=\frac1{(1-w)(1-z-zw)};
\label{eq:irgf}\\
\sum_{n,x\ge0}\bigl(1+\min\{n,x\}\bigr)z^{n}w^{x}
&=\frac1{(1-z)(1-w)(1-zw)}. \label{eq:mingf}
\end{align}
\end{theorem}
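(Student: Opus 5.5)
We will prove the four identities separately, each by extracting coefficients and comparing with the closed forms of Theorem~\ref{thm:rawcounts} and Theorem~\ref{thm:partialinjorbit}.

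For \eqref{eq:ezw}, expand $e^{z+w+zw}=e^{z}e^{w}e^{zw}$ as a product of three formal series. The coefficient of $z^nw^x$ in this product is
\[
\sum_{k}\frac{1}{k!}\cdot\frac{1}{(n-k)!}\cdot\frac{1}{(x-k)!},
\]
since $e^{zw}=\sum_k z^kw^k/k!$ and the other two factors supply $z^{n-k}/(n-k)!$ and $w^{x-k}/(x-k)!$. Multiplying by $n!\,x!$ gives $\sum_k\binom nk\binom xk k!$, which is \eqref{eq:rawIlr}. Combinatorially, this says a partial injection splits $N$ into unmatched points ($e^z$), matched pairs ($e^{zw}$), and unmatched points of $X$ ($e^w$), in the manner of Proposition~\ref{prop:egfproduct}. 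The coefficient computation alone suffices, though.

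For \eqref{eq:mingf}, we write the sum as $\sum_{n,x}\sum_{k=0}^{\min\{n,x\}}z^nw^x$. We then interchange to $\sum_{k\ge0}\sum_{n\ge k}\sum_{x\ge k}z^nw^x=\sum_k (zw)^k\cdot\frac{1}{1-z}\cdot\frac{1}{1-w}$, and sum the geometric series in $zw$ using Proposition~\ref{prop:geomseries}.

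For \eqref{eq:ilgf}, note that $i^l(n,x)=\sum_{k\le n}\binom xk$, where the upper limit $\min$ is harmless because $\binom xk=0$ for $k>x$. Hence $i^l$ is a partial sum in $n$ of $a(k,x)=\binom xk$, and its generating function is $\frac{1}{1-z}A(z,w)$ with $A(z,w)=\sum_{k,x}\binom xk z^kw^x$. This is the product rule for OGFs, since partial summation in $n$ is multiplication by $1/(1-z)$. Now $\sum_x w^x\sum_k\binom xk z^k=\sum_x (w(1+z))^x=\frac1{1-w-zw}$ by the binomial theorem (Proposition~\ref{prop:binomthm}) and the geometric series. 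Then \eqref{eq:irgf} follows by the symmetry $i^r(n,x)=i^l(x,n)$ noted in the corollary to Theorem~\ref{thm:partialinjorbit}, which amounts to swapping $z\leftrightarrow w$.

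The only delicate point is the bookkeeping with truncated sums in the $\min\{n,x\}$ limits. For the statement's $i^r$ written with upper limit $\min\{n,x\}$, the terms with $k>n$ vanish automatically, so the limit equals $x$ as in \eqref{eq:ilrr}. In general, all interchanges of summation are legitimate as formal series, because each coefficient receives only finitely many contributions. We expect no real obstacle beyond stating these justifications carefully.
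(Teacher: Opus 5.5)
Your proposal is correct and follows essentially the paper's route: direct coefficient extraction from $e^{z}e^{w}e^{zw}$ for \eqref{eq:ezw}, a product of three geometric series for \eqref{eq:mingf}, and the transpose symmetry $i^{r}(n,x)=i^{l}(x,n)$ for \eqref{eq:irgf}. The one small variation is in \eqref{eq:ilgf}: you sum $\sum_{k,x}\binom xk z^{k}w^{x}$ over $k$ first via the binomial theorem, giving $\sum_x\bigl(w(1+z)\bigr)^x$, whereas the paper sums over $x$ first using $\sum_x\binom xk w^x=w^k/(1-w)^{k+1}$ and then a geometric series in $zw/(1-w)$; your order is slightly shorter and avoids that nested substitution.
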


\begin{proof}
\eqref{eq:ezw}: We give a labeled (EGF) combinatorial construction of a
partial injection on ground sets $N,X$ (of sizes $n,x$) with three
independent, disjointly-assembled parts: the set $U\subseteq N$ of
\emph{unused} indices, the set $V\subseteq X$ of \emph{unused} elements, and
a bijection (matching) $\tau\colon N\setminus U\to X\setminus V$. Given the
partition of $N$ into $U$ and $N\setminus U$ and of $X$ into $V$ and
$X\setminus V$ (both of the same size $k:=n-|U|=x-|V|$, since $\tau$ is a
bijection between them), a partial injection is exactly the data of $U$,
$V$, and $\tau$ (with $U,N\setminus U$ ranging over all subsets of $N$ of
complementary sizes, similarly for $X$, and $\tau$ any bijection of the
matched-size pair). By the EGF product rule (Proposition~\ref{prop:egfproduct},
applied twice, once to assemble the $N$-side structure ``$U$, unconstrained
choice, EGF $e^{z}$ by Proposition~\ref{prop:egfexp}, together with the
matched part'' and once for the analogous $X$-side structure), the EGF in
$z$ alone of ``an arbitrary subset $U\subseteq N$, marked with weight $1$
per element'' is $e^{z}$ (Proposition~\ref{prop:egfexp}), and likewise
$e^{w}$ for $V\subseteq X$; and the EGF (now bivariate, in $z$ for the
matched indices and $w$ for the matched elements simultaneously) of a
bijection $\tau$ between a $k$-subset of $N$ and a $k$-subset of $X$,
summed over $k$, is $\sum_k z^kw^k/(k!)\cdot$ wait we must be careful: a
single ``matched pair'' (one index matched to one element) is a
bivariate-labeled atom of $z$-degree $1$ and $w$-degree $1$ simultaneously,
contributing the exponential factor $e^{zw}$ to the bivariate EGF (in the
sense that $k$ such independent matched pairs, assembled disjointly,
contribute $(zw)^{k}/k!$ to the EGF, by the direct multivariate extension
of Proposition~\ref{prop:egfexp}'s counting argument: there is exactly
$1=k!/k!$ way to build a specific set of $k$ labeled matched pairs once the
$k$ indices and $k$ elements to be matched are fixed and matched
bijectively, since a partial-injection restricted to $k$ specified indices
matched to $k$ specified elements is determined by a bijection between
them, and there are $k!$ such bijections, matching the $k!$ in the
denominator of the EGF term $(zw)^k/k!$ exactly). By
Proposition~\ref{prop:egfproduct} (extended to three independent,
simultaneously-labeled parts $U$, $V$, matched pairs, each drawing
disjointly from the combined ground set $N\sqcup X$), the total bivariate
EGF is the product $e^{z}\cdot e^{w}\cdot e^{zw}=e^{z+w+zw}$. We verify the
coefficient extraction directly: expanding, $e^ze^we^{zw}=\sum_{i,j,k\ge0}
\frac{z^i}{i!}\frac{w^j}{j!}\frac{(zw)^k}{k!}=\sum_{i,j,k}
\frac{z^{i+k}w^{j+k}}{i!j!k!}$; the coefficient of $z^nw^x/(n!x!)$ is
obtained by summing over $k$ with $i=n-k,j=x-k$:
\[
n!x!\sum_{k=0}^{\min\{n,x\}}\frac1{(n-k)!(x-k)!k!}
=\sum_{k=0}^{\min\{n,x\}}\frac{n!}{(n-k)!k!}\cdot\frac{x!}{(x-k)!}
=\sum_k\binom nk\ffall xk=\sum_k\binom nk\binom xkk!=i(n,x),
\]
matching Theorem~\ref{thm:rawcounts}, \eqref{eq:rawIlr}, and confirming
\eqref{eq:ezw}.

\eqref{eq:ilgf}: By Theorem~\ref{thm:partialinjorbit}, \eqref{eq:ilrl},
$i^{l}(n,x)=\sum_{k=0}^{n}\binom xk$ (the sum in fact truncates correctly at
$k=\min\{n,x\}$ since $\binom xk=0$ for $k>x$). We compute the bivariate OGF
by summing a geometric-type series in each variable separately and
combining: for fixed $k$, the coefficient contributes to all $n\ge k$ (with
weight $1$, independent of $n\ge k$) and to $x$ via $\binom xk$. Hence
\[
\sum_{n,x}i^{l}(n,x)z^nw^x
=\sum_{k\ge0}\Bigl(\sum_{n\ge k}z^{n}\Bigr)\Bigl(\sum_{x\ge0}\binom xkw^{x}
\Bigr)
=\sum_{k\ge0}\frac{z^{k}}{1-z}\cdot\frac{w^{k}}{(1-w)^{k+1}},
\]
using $\sum_{n\ge k}z^n=z^k/(1-z)$ (geometric series,
Proposition~\ref{prop:geomseries}, shifted) and $\sum_x\binom xkw^x=
w^k/(1-w)^{k+1}$ (Corollary~\ref{cor:multiset-ogf} with $c=k+1$, since
$\binom xk=\binom{(k+1)+(x-k)-1}{x-k}$ after reindexing $x-k$ as the
``multiset size'' -- more directly, $\binom xk$ is the coefficient of
$w^{x-k}$ in $(1-w)^{-(k+1)}$ by Corollary~\ref{cor:multiset-ogf} with
$n$ there set to $x-k$ and $c=k+1$, i.e.\ $\binom{k+(x-k)}{x-k}=\binom
xk$, and multiplying by $w^k$ shifts the index from $x-k$ to $x$). Summing
the geometric series in $k$:
\[
\sum_{k\ge0}\frac{z^{k}}{1-z}\cdot\frac{w^{k}}{(1-w)^{k+1}}
=\frac1{(1-z)(1-w)}\sum_{k\ge0}\Bigl(\frac{zw}{1-w}\Bigr)^{k}
=\frac1{(1-z)(1-w)}\cdot\frac1{1-\frac{zw}{1-w}}
=\frac1{(1-z)(1-w-zw)},
\]
using the geometric series once more (Proposition~\ref{prop:geomseries},
with the substitution $z\mapsto zw/(1-w)$, a valid formal substitution since
its constant term in $z$ is $0$) and clearing the denominator
$(1-w)\cdot\bigl(1-\frac{zw}{1-w}\bigr)=(1-w)-zw=1-w-zw$. This proves
\eqref{eq:ilgf}.

\eqref{eq:irgf}: identical computation with the roles of $(n,z)$ and
$(x,w)$ exchanged, starting from $i^{r}(n,x)=\sum_{k=0}^{x}\binom nk$
(Theorem~\ref{thm:partialinjorbit}, \eqref{eq:ilrr}); or, more simply, apply
Corollary~\ref{cor:transpose}'s transpose duality to \eqref{eq:ilgf}
directly (since $i^r(n,x)=i^l(x,n)$ by that corollary), which exchanges $z
\leftrightarrow w$ and $n\leftrightarrow x$ in \eqref{eq:ilgf}, giving
\eqref{eq:irgf}.

\eqref{eq:mingf}: By Theorem~\ref{thm:partialinjorbit}, \eqref{eq:ilrlr},
the coefficient of $z^nw^x$ we seek is $1+\min\{n,x\}$. We show the
right-hand side of \eqref{eq:mingf} has this same coefficient. By
Corollary~\ref{cor:multiset-ogf} applied three times (or directly, by
multiplying out three geometric series, Proposition~\ref{prop:geomseries}),
\[
\frac1{(1-z)(1-w)(1-zw)}=\Bigl(\sum_{a\ge0}z^{a}\Bigr)
\Bigl(\sum_{b\ge0}w^{b}\Bigr)\Bigl(\sum_{c\ge0}(zw)^{c}\Bigr)
=\sum_{a,b,c\ge0}z^{a+c}w^{b+c}.
\]
The coefficient of $z^{n}w^{x}$ is the number of triples $(a,b,c)$ of
nonnegative integers with $a+c=n$ and $b+c=x$; since $a=n-c\ge0$ forces
$c\le n$ and $b=x-c\ge0$ forces $c\le x$, exactly $c=0,1,\dots,\min\{n,x\}$
are possible, and each determines $a,b$ uniquely; so there are
$1+\min\{n,x\}$ such triples, matching the sought coefficient. The
bijection with orbits: $c$ corresponds to the number $k$ of matched pairs
(Theorem~\ref{thm:partialinjorbit}'s proof, Lemma~\ref{lem:smalldisjcorr}
(iii)), and $a,b$ to the numbers of unused indices and elements
respectively, exactly as in the labeled EGF picture of \eqref{eq:ezw} but
now at the level of $S_N\times S_X$-orbits (where only the \emph{sizes} of
$U,V$, not their specific elements, matter, and the matching, once its size
$k=c$ is fixed, is unique up to the action, by
Lemma~\ref{lem:smalldisjcorr}(iii)).
\end{proof}

\chapter{Bell Numbers and Euler's Product}\label{ch:bell}

This chapter treats the unitary row (functions $X\to N$, i.e.\ divisions of
$X$) and the join-unitary row (disjunctive families, i.e.\ functions
$X\to N\cup\{\ast\}$), the two rows of Table~\ref{tab:maintable} whose
$S_N$-columns involve Stirling numbers.

\begin{theorem}[Generating functions of the two rows]\label{thm:bellrows}
Let $t$ be an exponential variable marking $x$.
\begin{enumerate}
\item (No action.) $\displaystyle\sum_xn^{x}\frac{t^{x}}{x!}=e^{nt}$ and
$\displaystyle\sum_x(1+n)^{x}\frac{t^x}{x!}=e^{(1+n)t}$.
\item ($S_N$.) $\displaystyle\sum_x\Bigl(\sum_{k=0}^{n}\stir xk\Bigr)
\frac{t^x}{x!}=\sum_{k=0}^{n}\frac{(e^{t}-1)^{k}}{k!}$, and
$\displaystyle\sum_x\Bigl(\sum_{k=0}^{x}\sum_{l=0}^{n}\binom xk\stir kl
\Bigr)\frac{t^x}{x!}=e^{t}\sum_{l=0}^{n}\frac{(e^t-1)^l}{l!}$.
\item ($S_X$.) $\displaystyle\sum_x\binom{n+x-1}{x}w^{x}=(1-w)^{-n}$ and
$\displaystyle\sum_x\binom{n+x}{x}w^{x}=(1-w)^{-(n+1)}$.
\item ($S_N\times S_X$.) Writing $p_{\le n}(x)$ for the number of
partitions of the integer $x$ into at most $n$ parts,
\begin{equation}\label{eq:eulerproduct}
\sum_{n,x\ge0}p_{\le n}(x)z^{n}w^{x}=\prod_{i\ge0}\frac1{1-zw^{i}}.
\end{equation}
\end{enumerate}
\end{theorem}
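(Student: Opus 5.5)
I will prove the four parts in order, each by coefficient extraction against a known closed-form series.

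Part~(i) is immediate: for fixed $n$, $\sum_x n^x t^x/x! = e^{nt}$ is the exponential series of Proposition~\ref{prop:egfexp} evaluated at $nt$. Equivalently, $n$ independent unconstrained decorations give $e^{nt}$, and $n+1$ of them give $e^{(1+n)t}$.

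Part~(iii) is Corollary~\ref{cor:multiset-ogf} with $c=n$ and with $c=n+1$. For the second identity I use the rewriting $\binom{n+x}{x}=\binom{(n+1)+x-1}{x}$.

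For part~(ii), the key step is the single-block identity
\[
\sum_x \stir xk \frac{t^x}{x!}=\frac{(e^t-1)^k}{k!}.
\]
I will derive it from Proposition~\ref{prop:stirlingformula}. Multiply \eqref{eq:stirlingformula} by $t^x/x!$ and sum over $x$. Each term $(k-j)^x$ then becomes $e^{(k-j)t}$. The binomial theorem (Proposition~\ref{prop:binomthm}, with $u=e^t$ and $v=-1$) collapses the sum to $(e^t-1)^k/k!$. The case $k=0$ gives $1$, consistent with $\stir x0=[x=0]$. Summing over $k=0,\dots,n$ gives the first identity of part~(ii).

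For the second identity of part~(ii), I write the coefficient as $\sum_k \binom xk a_k b_{x-k}$, where $a_k=\sum_{l\le n}\stir kl$ and $b_m=1$. The EGF product rule (Proposition~\ref{prop:egfproduct}) then gives $e^t$ times the previous series. This matches the combinatorial picture: choose the covered set $A$, then partition $A$ into at most $n$ blocks.

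Part~(iv), Euler's product, is the step I expect to need the most care. My plan is to interpret a partition of $x$ into at most $n$ parts as a multiset of exactly $n$ parts, where zero parts are allowed (padding with zeros). Concretely, such a partition is a sequence $(m_i)_{i\ge0}$ of multiplicities of the part sizes $i=0,1,2,\dots$, with $\sum_i m_i=n$ and $\sum_i i\,m_i=x$. This correspondence is a bijection: strip the zeros to recover a partition with at most $n$ nonzero parts, and conversely $m_0$ is uniquely determined as $n$ minus the number of nonzero parts.

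The factor $1/(1-zw^i)=\sum_{m\ge0} z^m w^{im}$ (Proposition~\ref{prop:geomseries}) records the multiplicity $m_i$ of part size $i$. Expanding the product, the coefficient of $z^n w^x$ counts exactly the sequences above.

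The only technical point is that the infinite product is a well-defined formal power series. For any fixed $x$, only the factors with $i\le x$ can affect the coefficient of $w^x$, so each coefficient is determined by a finite product. I will state this finiteness explicitly and then conclude.
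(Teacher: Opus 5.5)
Your proposal is correct and follows essentially the same route as the paper. Parts (i) and (iii) go through the exponential and multiset series. Part (ii) goes through the inclusion--exclusion/binomial-theorem identity $\sum_x\stir xk\,t^x/x!=(e^t-1)^k/k!$ followed by the EGF product rule with the factor $e^t$. Part (iv) uses the multiplicity encoding $(m_i)_{i\ge0}$ with $m_0$ as slack, so that $\sum_i m_i=n$ and $\sum_i i\,m_i=x$. Your version of (iv) is in fact cleaner than the paper's: the paper opens with the incorrect aside $p_{\le n}(x)=\sum_{k=0}^{n}\stir xk$, which your argument rightly never uses, and you also address why the infinite product is well defined.
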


\begin{proof}
(i) A function $X\to N$ (resp.\ $X\to N\cup\{\ast\}$) assigns each of $x$
elements one of $n$ (resp.\ $n+1$) values, giving $n^{x}$ (resp.\
$(1+n)^{x}$) functions by the multiplication principle; the EGF of the
constant sequence $a_x=r^{x}$ is $\sum_xr^{x}t^{x}/x!=\sum_x(rt)^x/x!=e^{rt}$
by Proposition~\ref{prop:egfexp} applied with $z$ renamed $rt$ (a formal
substitution, valid since it is a ring homomorphism on formal power
series).

(ii) A function $X\to N$ up to relabeling $N$ (i.e.\ an $S_N$-orbit) is the
same data as a partition of $X$ into at most $n$ nonempty blocks (the fibers
of the function, discarding empty fibers, together forgetting the
particular labels $1,\dots,n$ assigned to them, exactly as in
Theorem~\ref{thm:disjSNstirling}'s argument specialized to $A=X$): so the
count is $\sum_{k=0}^{n}\stir xk$. Its EGF: by definition of the Stirling
number as a partition count, and using the classical set-partition EGF
identity $\sum_x\stir xk t^{x}/x!=(e^{t}-1)^{k}/k!$ --- which we now prove
from scratch: a partition of $X$ into exactly $k$ nonempty, unlabeled
blocks is, by the same labeled-to-unlabeled correspondence used in
Proposition~\ref{prop:stirlingformula}'s proof, obtained from a surjection
$X\to\{1,\dots,k\}$ by forgetting the labeling of the $k$ blocks, a
$k!$-to-one correspondence; a surjection is an arbitrary function (EGF
$e^{t}$ per coordinate, i.e.\ EGF $(e^{t})^{k}=e^{kt}$ for a function to a
$k$-element set, since assigning independently one of $k$ values to each
of the $x$ elements is $k$ ``independent unconstrained decorations'' in the
sense of Proposition~\ref{prop:egfexp}) with the constraint that every one
of the $k$ values is hit at least once; by inclusion--exclusion on which
values are missed (identical to the proof of
Proposition~\ref{prop:stirlingformula}, but now packaged as an EGF
identity): the EGF of ``functions missing a specified set of $j$ values''
is $e^{(k-j)t}$ (an arbitrary function to the remaining $k-j$ values), so
by inclusion--exclusion the EGF of surjections $X\to\{1,\dots,k\}$, summed
over $X$ of every size $x$, is $\sum_{j=0}^{k}(-1)^{j}\binom kje^{(k-j)t}
=(e^{t}-1)^{k}$ (binomial theorem, Proposition~\ref{prop:binomthm}, with
$u=e^t,v=-1$) --- dividing by $k!$ (the $k!$-to-one correspondence) gives
the EGF of partitions into exactly $k$ blocks as $(e^{t}-1)^{k}/k!$,
confirming the identity used. Summing over $k=0,\dots,n$ (addition
principle, disjoint block-counts) gives the first EGF of part (ii).

For the second formula of (ii): by Theorem~\ref{thm:disjSNstirling}, an
$S_N$-orbit of a disjunctive family corresponds to a pair (subset
$A\subseteq X$ of size $x-k'$ for the unused part, a partition of the
remaining $k'$-subset into at most $n$ blocks) -- equivalently (swapping
which size we call $k$) a pair (a partition of some $l$-subset into exactly
$m\le n$ blocks, together forgetting nothing else, with the complementary
$x-l$ elements simply marked ``unused''). This is exactly the same
``exponential mark, plus $e^t$ for the free unused part'' structure as
Proposition~\ref{prop:egfproduct}: the EGF of ``choose a subset to be
unused, EGF $e^{t}$ (Proposition~\ref{prop:egfexp})'' times the EGF of ``a
partition of the complementary subset into at most $n$ blocks,'' i.e.\ the
first formula of part (ii) applied to the complementary ground set, giving
by the EGF product rule (Proposition~\ref{prop:egfproduct}) the product
$e^{t}\cdot\sum_{l=0}^n(e^t-1)^l/l!$, as claimed.

(iii) An $S_X$-orbit of a function $X\to N$ is determined by its fiber-size
tuple $(\alpha_1,\dots,\alpha_n)$ (the sizes of the $n$ fibers, possibly
zero), a weak composition of $x$ into $n$ parts; there are $\binom{n+x-1}x$
of these by stars-and-bars (Proposition~\ref{prop:starsbars}), matching
Corollary~\ref{cor:multiset-ogf} with $c=n$, giving OGF $(1-w)^{-n}$. For
disjunctive families up to $S_X$, the fiber-size tuple has $n+1$ parts (the
$n$ genuine fibers, plus the ``unused'' pseudo-fiber $\alpha_\ast$), giving
$\binom{n+x}{x}$ by stars-and-bars with $c=n+1$, and OGF $(1-w)^{-(n+1)}$,
matching Theorem~\ref{thm:disjSXcount}.

(iv) By part (ii), $p_{\le n}(x)=\sum_{k=0}^n\stir xk$ (a partition of $X$
into at most $n$ blocks, forgetting which specific $x$-set $X$ is, is
determined purely by its multiset of block sizes, a partition of the
integer $x$ into at most $n$ parts --- this is the standard bijection
between set-partitions-up-to-relabeling-the-ground-set and
integer-partitions, i.e.\ block-size multisets, already used in the proof
of Theorem~\ref{thm:maintable}'s Row~1 entry). We compute the bivariate OGF
$\sum_{n,x}p_{\le n}(x)z^nw^x$ directly by encoding a partition of $x$ into
at most $n$ parts as a multiset of part-sizes: writing $m_i\ge0$ for the
number of parts of size $i$ (for $i=1,2,3,\dots$), a partition into at most
$n$ parts corresponds to nonnegative integers $(m_i)_{i\ge1}$ with
$\sum_im_i\le n$ (total number of parts) and $\sum_iim_i=x$ (the integer
being partitioned); introduce a slack variable $m_0\ge0$ (for the ``missing''
parts, up to the cap $n$) so that $\sum_{i\ge0}m_i=n$ exactly. Then $z$
marks $n=\sum_im_i$ (each part, including slack, contributing one unit of
$z$) and $w$ marks $x=\sum_{i\ge1}im_i$ (each part of size $i$ contributing
$i$ units of $w$, slack contributing none). By the OGF product rule
applied to the independent choices of each $m_i$ (a multiset-of-a-single-color
choice, with OGF $\sum_{m_i\ge0}(zw^{i})^{m_i}=1/(1-zw^{i})$ for $i\ge1$
by the geometric series, Proposition~\ref{prop:geomseries}, and
$\sum_{m_0\ge0}z^{m_0}=1/(1-z)$ for the slack), the total bivariate OGF is
the infinite product $\prod_{i\ge0}(1-zw^{i})^{-1}$ (the $i=0$ factor being
$1/(1-z)$, matching $w^0=1$), which is \eqref{eq:eulerproduct}.
\end{proof}

\begin{corollary}[Bell-number collapse in the stable range]
\label{cor:bellcollapse}
In the range $n\ge x$, the two $S_N$-column entries of Table
\textup{\ref{tab:maintable}} collapse to Bell numbers:
\begin{equation}
\sum_{k=0}^{n}\stir xk=B_x,\qquad
\sum_{k=0}^{x}\sum_{l=0}^{n}\binom xk\stir kl=\sum_{k=0}^{x}\binom xkB_k
=B_{x+1}\qquad(n\ge x).
\end{equation}
\end{corollary}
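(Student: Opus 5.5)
The first identity comes straight from the definitions. Since $\stir xk=0$ for $k>x$ (the definition in Chapter~\ref{ch:stirling}), when $n\ge x$ the truncated sum $\sum_{k=0}^{n}\stir xk$ has no nonzero terms beyond $k=x$. It therefore equals $\sum_{k=0}^{x}\stir xk$, which is $B_x$ by the definition of the Bell number. Put combinatorially, a partition of an $x$-set has at most $x\le n$ blocks, so the cap ``at most $n$ blocks'' never binds.

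For the second identity I would do the same truncation inside the double sum. For each $k$ with $0\le k\le x\le n$, the inner sum $\sum_{l=0}^{n}\stir kl$ also loses nothing beyond $l=k\le n$, so it equals $B_k$. This gives $\sum_{k=0}^{x}\binom xk\stir{k}{\cdot}$-sums collapsing to $\sum_{k=0}^{x}\binom xkB_k$. That expression is exactly the right-hand side of the Bell recurrence, Proposition~\ref{prop:bellrec}, so it equals $B_{x+1}$.

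I would also give a bijective sanity check, though it is optional. By Lemma~\ref{lem:disjSNgeneral}, the objects counted are pairs $(A,\mathcal A)$ consisting of a subset $A\subseteq X$ and a partition $\mathcal A$ of $A$. Adjoin a new point $\ast$ and form the block $(X\setminus A)\cup\{\ast\}$. This gives a bijection onto partitions of $X\cup\{\ast\}$, which has $x+1$ elements, and that set is counted by $B_{x+1}$.

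Nothing here is a genuine obstacle. The only point needing care is checking that both truncations are harmless: the inner index $k$ never exceeds $x\le n$, so the constraint $l\le n$ is automatically implied by $l\le k$.
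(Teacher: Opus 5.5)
Your proof is correct and follows the paper's argument: both caps ($k\le n$ in the first sum, $l\le n$ in the inner sum) are vacuous because Stirling numbers vanish beyond the number of elements, and the resulting $\sum_{k=0}^{x}\binom xk B_k$ equals $B_{x+1}$ by the Bell recurrence. Your optional bijection is also valid. It sends $(A,\mathcal A)$ to the partition of $X\cup\{\ast\}$ obtained by adding the block $(X\setminus A)\cup\{\ast\}$, which is just the combinatorial content of the paper's own proof of that recurrence.
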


\begin{proof}
For $n\ge x$: since $\stir xk=0$ once $k>x$ (Chapter~\ref{ch:stirling}, a
partition of an $x$-set cannot have more than $x$ blocks), the cap $k\le n$
in $\sum_{k=0}^n\stir xk$ is vacuous once $n\ge x$, so the sum equals
$\sum_{k=0}^{x}\stir xk=B_x$ by definition of the Bell number. Similarly,
for $n\ge x\ge k$ (so in particular $l\le k\le x\le n$ makes the cap
$l\le n$ vacuous too), $\sum_{l=0}^{n}\stir kl=B_k$ by the same argument
applied with $k$ in place of $x$, so $\sum_{k=0}^x\sum_{l=0}^n\binom xk
\stir kl=\sum_{k=0}^x\binom xkB_k$; and this equals $B_{x+1}$ by the Bell
recurrence, Proposition~\ref{prop:bellrec}, applied verbatim.
\end{proof}

\begin{remark}[Euler's identity as a by-product]\label{rem:eulerbyproduct}
Fixing $n$ and summing \eqref{eq:eulerproduct} over $x$ gives, by
conjugating Young diagrams (a partition of $x$ into at most $n$ parts
corresponds, by transposing its diagram of boxes -- reading columns as rows
-- to a partition of $x$ into parts each of size at most $n$; this
transposition is a bijection since it is its own inverse, transposing
twice returning the original diagram), $\sum_xp_{\le n}(x)w^x
=\prod_{j=1}^n(1-w^j)^{-1}$ (the classical generating function for
partitions into parts of size at most $n$, itself an instance of the same
multiset argument as in part (iv)'s proof, with $m_j\ge0$ now marking the
number of parts of size exactly $j\le n$, each contributing $OGF\
1/(1-w^j)$, and no separate slack variable needed since there is no cap on
the number of parts, only on their size). Comparing this with
\eqref{eq:eulerproduct}'s coefficient of $z^{n}$ (extracted directly from
the product $\prod_{i\ge0}(1-zw^i)^{-1}$ by matching powers of $z$, which
requires a modest justification: extracting the coefficient of $z^n$ from
an infinite product of this shape is a finite computation for each fixed
power of $w$, since only finitely many factors contribute a given total
$w$-degree at a given $z$-degree, so the formal manipulation is legitimate)
proves Euler's classical identity
\[
\sum_{n\ge0}\frac{z^{n}}{(1-w)(1-w^{2})\cdots(1-w^{n})}=\prod_{i\ge0}
\frac1{1-zw^{i}},
\]
i.e.\ $e_w(z)=1/(z;w)_\infty$ in the standard $q$-series notation (with $w$
playing the role usually called $q$). Thus the deepest cell of the
classical table --- the two-sided unitary-row count --- is generated by
exactly the same series that will reappear, with the roles of the two
variables interchanged, as the $q$-exponential governing the $q$-analog of
the twelvefold way (Chapter~\ref{ch:bridge}).
\end{remark}

\chapter{The Difference Calculus Behind the Inductive Relations}
\label{ch:diffcalc}

\begin{proposition}[Multiplication by $1-w$ is differencing]
\label{prop:multdiff}
For a two-parameter array $u(n,x)$ with bivariate OGF $U(z,w)=\sum_{n,x}
u(n,x)z^nw^x$, the array $u(n,x)-u(n,x-1)$ (with the convention
$u(n,-1)=0$) has OGF $(1-w)U(z,w)$; similarly $u(n,x)-u(n-1,x)$ has OGF
$(1-z)U(z,w)$.
\end{proposition}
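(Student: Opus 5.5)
The plan is a direct coefficient extraction from the Cauchy product of formal power series (Chapter~\ref{ch:gf}), carried out in one variable with the other held as a passive parameter. This is exactly the computation already done in the proof of Corollary~\ref{cor:pascalrecurrences}, now in the bivariate setting.

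First I expand $(1-w)U(z,w)=U(z,w)-wU(z,w)$ by distributivity in the ring of formal power series. I then write
\[
wU(z,w)=\sum_{n,x\ge0}u(n,x)z^nw^{x+1}=\sum_{n\ge0}\sum_{x\ge1}u(n,x-1)z^nw^x,
\]
reindexing $x+1\mapsto x$. This reindexing is a bijection between $\{x\ge0\}$ and $\{x\ge1\}$. Under the convention $u(n,-1)=0$, I may extend the inner sum to $x\ge0$ without changing it.

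Next, since two formal power series are equal exactly when their coefficients agree, I read off the coefficient of $z^nw^x$. In $U(z,w)$ it is $u(n,x)$, and in $wU(z,w)$ it is $u(n,x-1)$. Subtracting gives the coefficient $u(n,x)-u(n,x-1)$ in $(1-w)U(z,w)$, which is the first claim.

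For the second claim I repeat the same computation with the roles of $(n,z)$ and $(x,w)$ exchanged, using the analogous convention $u(-1,x)=0$. Equivalently, I can apply the first claim to the transposed array $u'(x,n):=u(n,x)$, whose OGF is $U(w,z)$.

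There is no real obstacle here. The only points needing care are the boundary terms: the statement explicitly supplies the convention $u(n,-1)=0$ only for the first claim, so for the second I will state the symmetric convention $u(-1,x)=0$. I will also justify, as in Corollary~\ref{cor:pascalrecurrences}, that multiplying by the polynomial $1-w$ is a legitimate operation in the ring of bivariate formal power series. It is, because each coefficient of the product is a finite sum of just two terms.
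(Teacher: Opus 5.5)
Your proposal is correct and follows essentially the same route as the paper: expand $(1-w)U=U-wU$, compare coefficients of $z^nw^x$ using the Cauchy product in $w$ with $z$ held as a parameter, and then exchange the roles of $(n,z)$ and $(x,w)$. You also state the convention $u(-1,x)=0$ explicitly for the second claim, a boundary condition the paper leaves implicit.
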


\begin{proof}
By the Cauchy product definition of multiplication of formal power series
(Chapter~\ref{ch:gf}) applied in the variable $w$ alone (treating $z$ as a
parameter), the coefficient of $w^{x}$ in $(1-w)U(z,w)=U(z,w)-wU(z,w)$ is
$u(n,x)$-coefficient-of-$U$ minus the coefficient of $w^{x-1}$ in $U$, i.e.\
$u(n,x)-u(n,x-1)$, for every fixed power $z^n$; so $(1-w)U(z,w)$ is exactly
the bivariate OGF of $u(n,x)-u(n,x-1)$. The statement for $(1-z)U(z,w)$ is
identical with the roles of $z,n$ and $w,x$ exchanged.
\end{proof}

\begin{theorem}[Generating-function form of Theorem~\ref{thm:diffrelations}]
\label{thm:diffgf}
Let $B^{r}(z,w),B^{lr}(z,w)$ be the bivariate OGFs of $b^{r}(n,x)$,
$b^{lr}(n,x)$ (all-family orbit counts under $S_X$, resp.\ $S_N\times S_X$),
and $S_r^{r},S_r^{lr},S_{lr}^{r},S_{lr}^{lr}$ those of the coverings and
hypergraphs under the same groups. Then
\begin{equation}\label{eq:gfdiffrelations}
S_r^{r}=(1-w)B^{r},\qquad S_r^{lr}=(1-w)B^{lr},\qquad
S_{lr}^{r}=(1-z)(1-w)B^{r},\qquad S_{lr}^{lr}=(1-z)(1-w)B^{lr}.
\end{equation}
More generally, on every column of Table~\textup{\ref{tab:maintable}}:
restricting to coverings multiplies the orbit generating function by
$(1-w)$; restricting further to hypergraphs multiplies it by $(1-z)$; and
the two operations commute.
\end{theorem}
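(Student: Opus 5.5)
The plan is to derive \eqref{eq:gfdiffrelations} entirely from Theorem~\ref{thm:diffrelations} and Proposition~\ref{prop:multdiff}, and then treat the ``more generally'' clause separately, column by column.

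\textbf{First identity.} Relation \eqref{eq:diff1} says $s_r^r(n,x)=b^r(n,x)-b^r(n,x-1)$. By Proposition~\ref{prop:multdiff}, the right-hand side has bivariate OGF $(1-w)B^r$, so $S_r^r=(1-w)B^r$. Before invoking the proposition, I would check the boundary convention $b^r(n,-1)=0$. At $x=0$ the covering count must equal $b^r(n,0)$: the unique $n$-family of $\varnothing$ is a covering. This is consistent with the stratification $b^r(n,x)=\sum_{k\le x}s_r^r(n,k)$ used in the proof of \eqref{eq:diff1}, where the telescoping also holds at $x=0$.

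\textbf{Second identity.} The same argument, using \eqref{eq:diff2}, gives $S_r^{lr}=(1-w)B^{lr}$.

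\textbf{Hypergraph identities.} Relations \eqref{eq:diff3} and \eqref{eq:diff4} say that passing from coverings to hypergraphs is differencing in $n$. Proposition~\ref{prop:multdiff} in the variable $z$ then gives $S_{lr}^{r}=(1-z)S_r^r=(1-z)(1-w)B^r$, and likewise $S_{lr}^{lr}=(1-z)(1-w)B^{lr}$. Again the convention $s_r(-1,x)=0$ has to be checked. It is justified by the support stratification $s_r(n,x)=\sum_{l\le n}s_{lr}(l,x)$ from the proof of \eqref{eq:diff3}.

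\textbf{Main obstacle: the ``more generally'' clause.} This requires checking the $S_N$ column and the raw column, which Theorem~\ref{thm:diffrelations} does not cover.

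For the $S_N$ column, the stratification by covered set does not give $b^l(n,x)=\sum_k s_r^l(n,k)$. Instead it gives $\sum_k\binom xk s_r^l(n,k)$, because the $S_N$-action does not identify different covered subsets $B\subseteq X$. So plain multiplication by $(1-w)$ is correct only after passing to exponential generating functions in $w$ (marking $x$ with $w^x/x!$). There, Proposition~\ref{prop:egfproduct} with a factor $e^{w}$ for the uncovered elements turns the binomial convolution into multiplication by $e^{-w}$. Equivalently, this is the finite difference of Remark~\ref{rem:onesideddiff} in its ordinary binomial form.

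I would therefore state the general claim as follows. Restricting to coverings divides out the ``free uncovered part'' factor appropriate to the column: $1/(1-w)$ when $X$ is unlabeled (the $S_X$ and $S_N\times S_X$ columns), and $e^{w}$ when $X$ is labeled. Similarly, restricting to strict families divides out the factor for the $\varnothing$-indices, as in \eqref{eq:sx} and \eqref{eq:hx}: $1/(1-z)$ when $N$ is unlabeled, $e^{z}$ when $N$ is labeled.

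Finally, commutativity follows because the two decompositions are independent. The empty indices and the uncovered elements are separated simultaneously, so the two factors simply multiply, and multiplication of formal power series is commutative.
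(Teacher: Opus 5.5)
Your treatment of the first, second and fourth identities is the paper's own argument (Proposition~\ref{prop:multdiff} applied to \eqref{eq:diff1}, \eqref{eq:diff2}, \eqref{eq:diff4}), and those three identities are correct. Your objection to the ``more generally'' clause is also correct, and it goes beyond the paper, whose proof of that clause only invokes commutativity. In the $S_N$ column with $n=1$ one has $b^l(1,x)=2^x$ but $s_r^l(1,x)=1$. So there the covering restriction is multiplication by $e^{-w}$ on EGFs in $w$, not by $(1-w)$.

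The gap is the third identity, $S^r_{lr}=(1-z)(1-w)B^r$. You cannot prove it because it is false. You take \eqref{eq:diff3} at face value and justify the boundary convention by the support stratification $s_r^r(n,x)=\sum_{l\le n}s^r_{lr}(l,x)$. This stratification fails for exactly the reason you diagnosed in the $S_N$ column, with $N$ and $X$ exchanged. $S_X$ does not move $N$, so supports $L\subseteq N$ of equal size are not identified, and the correct identity is $s_r^r(n,x)=\sum_l\binom nl s^r_{lr}(l,x)$.

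Concretely, take $n=2$, $x=1$, $X=\{a\}$. The coverings $(\{a\},\varnothing)$, $(\varnothing,\{a\})$, $(\{a\},\{a\})$ are three distinct $S_X$-orbits, and $s_r^r(1,1)=1$. Yet only $(\{a\},\{a\})$ is a hypergraph, so $s^r_{lr}(2,1)=1\neq 3-1$. This agrees with Theorem~\ref{thm:nfamSX}(iv): $\binom31-2\binom11+\binom01=1$. Hence the coefficient of $z^2w$ is $1$ in $S^r_{lr}$ but $2$ in $(1-z)(1-w)B^r$, and \eqref{eq:diff3} and \eqref{eq:diff5} are wrong as stated.

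Your own rule, ``$e^{z}$ when $N$ is labeled'', applies precisely to the $S_X$ column and contradicts your third identity, so the proposal is internally inconsistent. The correct statement, with $z$ exponential, is
\[
\sum_{n,x}s^r_{lr}(n,x)\frac{z^n}{n!}w^x=e^{-z}(1-w)\sum_{n,x}b^r(n,x)\frac{z^n}{n!}w^x .
\]
The paper's proof goes through \eqref{eq:diff3} in the same way and inherits the same error. Only the first, second and fourth displayed identities hold.
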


\begin{proof}
Immediate from Proposition~\ref{prop:multdiff} applied to each of
\eqref{eq:diff1}--\eqref{eq:diff4} of Theorem~\ref{thm:diffrelations}:
e.g.\ \eqref{eq:diff1}, $s_r^r(n,x)=b^r(n,x)-b^r(n,x-1)$, says exactly
that the OGF of $s_r^r$ is $(1-w)$ times the OGF of $b^r$, i.e.\
$S_r^r=(1-w)B^r$; similarly for the other three identities of
\eqref{eq:gfdiffrelations}. Commutativity of the two operations
(differencing in $w$, i.e.\ multiplying by $1-w$, and differencing in $z$,
i.e.\ multiplying by $1-z$) is immediate since multiplication of formal
power series is commutative: $(1-z)(1-w)U=(1-w)(1-z)U$ for any $U$.
\end{proof}

\begin{remark}[The corrected Theorem B, in generating-function form]
Combining Theorem~\ref{thm:diffgf} with Remark~\ref{rem:correctionB}: if
$A(z,w)$ denotes the bivariate OGF of the $S_N\times S_X$-orbit counts of
\emph{all} homomorphisms (i.e.\ $A=B^{lr}$), then $(1-w)A$ generates the
\textbf{meet}-unitary (covering) orbit counts, and $(1-z)(1-w)A$ generates
the fast-growing (hypergraph) orbit counts. This is the precise
generating-function statement of the corrected Theorem~B, matching the
independent correction of the 2026 companion note referenced in
Remark~\ref{rem:correctionB}.
\end{remark}

\chapter{Rationality and Quasi-Polynomiality of the Two-Sided Column}
\label{ch:rational}

The two-sided orbit count $\alpha(n,x):=b^{lr}(n,x)$ of \emph{all}
homomorphisms (Theorem~\ref{thm:orbitcounts}, \eqref{eq:blrformula}) is the
hardest entry of Table~\ref{tab:maintable}: unlike every other entry, it
was given in Chapter~\ref{ch:burnsideapplication} only as a double sum over
cycle types, with no simpler closed form available. This chapter shows
that, upon fixing $n$ and generating over $x$, $\alpha(n,x)$ is nonetheless
governed by a rational generating function, hence (Chapter~\ref{ch:gf}) is
a quasi-polynomial in $x$.

\begin{theorem}[Rational generating function for fixed $n$]
\label{thm:rationalgf}
Fix $n\ge0$. For $\pi\in S_n$, let $\pi^{\ast}$ denote the permutation
induced by $\pi$ on the $2^{n}$ subsets of $N$ (i.e.\ $\pi^\ast(A)=\pi(A)$
for $A\subseteq N$), and let $c_d(\pi^{\ast})$ denote the number of
$d$-cycles of $\pi^{\ast}$ (as a permutation of the $2^{n}$-element set
$\PS(N)$). Then
\begin{equation}\label{eq:rationalGF}
\sum_{x\ge0}\alpha(n,x)w^{x}=\frac1{n!}\sum_{\pi\in S_n}\prod_{d\ge1}
(1-w^{d})^{-c_d(\pi^{\ast})}.
\end{equation}
Consequently this series is a rational function of $w$ whose poles are all
roots of unity, so by Proposition~\textup{\ref{prop:rationaltoqp}},
$\alpha(n,x)$ is a quasi-polynomial in $x$, of degree $2^{n}-1$ (attained
uniquely at the pole $w=1$), with period dividing
$\mathrm{lcm}\{d:c_d(\pi^{\ast})\neq0\text{ for some }\pi\in S_n\}$.
\end{theorem}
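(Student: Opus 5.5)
The plan is to apply Burnside's lemma (Theorem~\ref{thm:burnside}) only to the $S_N$-factor, after first collapsing the $S_X$-symmetry completely by a multiset description. By Corollary~\ref{cor:transpose} and Theorem~\ref{thm:prop2}, it suffices to count $S_N\times S_X$-orbits of relations viewed through the transpose. There a relation $R$ becomes a function $h\colon X\to\PS(N)$, $a\mapsto R^{-1}(a)$.

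The key steps, in order, are as follows.

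\emph{Step 1: quotient by $S_X$.} Exactly as in Lemma~\ref{lem:coveringSN} (with the roles of $N,X$ exchanged), two functions $X\to\PS(N)$ lie in the same $S_X$-orbit iff they have the same multiplicity vector $(\mu_A)_{A\subseteq N}$, $\mu_A=|h^{-1}(A)|$. So $S_X$-orbits are the multisets of size $x$ drawn from the $2^n$ ``colors'' $A\subseteq N$.

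\emph{Step 2: the residual $S_N$-action.} Since the actions of $S_N$ and $S_X$ commute, $S_N$ acts on the set of $S_X$-orbits, by $(\pi\cdot\mu)_{\pi(A)}=\mu_A$, i.e.\ through $\pi^\ast$. I would verify the small lemma that $S_N\times S_X$-orbits on relations biject with $S_N$-orbits on $S_X$-orbits. This holds because $(\pi,\sigma)=(\pi,\mathrm{id})(\mathrm{id},\sigma)$, so $R\sim R'$ under the product iff $\pi\cdot R$ and $R'$ lie in the same $S_X$-orbit for some $\pi$.

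\emph{Step 3: Burnside.} We get
\[
\alpha(n,x)=\frac1{n!}\sum_{\pi}\#\{\text{size-}x\text{ multisets }\mu\text{ with }\mu\circ\pi^\ast=\mu\}.
\]
A multiset is fixed iff its multiplicity is constant on each cycle of $\pi^\ast$. A $d$-cycle carrying common multiplicity $m$ contributes $dm$ to the size. Hence, by the geometric series (Proposition~\ref{prop:geomseries}) and the OGF product rule over the cycles, the fixed-point generating function in $w$ is $\prod_d(1-w^d)^{-c_d(\pi^\ast)}$. Summing over $\pi$ gives \eqref{eq:rationalGF}.

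\emph{Step 4: rationality and degree.} Each summand is rational with poles only at $d$-th roots of unity, where $d$ ranges over cycle lengths of $\pi^\ast$. Proposition~\ref{prop:rationaltoqp} then gives quasi-polynomiality with the stated period. For the degree, I compare pole orders.
\begin{itemize}
\item At $w=1$ the summand for $\pi$ has a pole of order $\sum_d c_d(\pi^\ast)$, the number of cycles of $\pi^\ast$. This equals $2^n$ exactly when $\pi^\ast=\mathrm{id}$, i.e.\ $\pi=\mathrm{id}$ (for $n\ge1$, since any $\pi\neq\mathrm{id}$ moves some singleton). Every other $\pi$ gives strictly fewer cycles.
\item At a primitive $e$-th root of unity with $e>1$, the pole order of a summand is $\sum_{e\mid d}c_d(\pi^\ast)\le 2^n/e<2^n$.
\end{itemize}
So the unique maximal-order pole is $w=1$, of order $2^n$, contributed only by the identity term with positive coefficient $1/n!$. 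No cancellation can occur there, which gives degree exactly $2^n-1$.

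The main obstacle I expect is Step 2, the careful justification that iterated quotienting computes the orbits of the product group. A secondary point is the non-cancellation argument for the leading pole. The rest is routine bookkeeping with Corollary~\ref{cor:multiset-ogf}.
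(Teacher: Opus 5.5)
Your proposal is correct and follows essentially the same route as the paper. It collapses the $S_X$-symmetry by viewing relations as size-$x$ multisets of colors $A\subseteq N$, applies Burnside to the residual $S_N$-action through $\pi^\ast$, reads off $\prod_d(1-w^d)^{-c_d(\pi^\ast)}$ from multiplicities constant on cycles, and compares pole orders at $w=1$ via the total cycle count of $\pi^\ast$. Two points in your version go beyond what the paper's proof writes out. You justify explicitly that iterated quotienting gives the $S_N\times S_X$-orbits. You also bound $\sum_{e\mid d}c_d(\pi^\ast)\le 2^n/e<2^n$ at the other roots of unity; this is genuinely needed, since Proposition~\ref{prop:rationaltoqp} reads the degree off the maximal pole order over \emph{all} poles, not just $w=1$.
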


\begin{proof}
\emph{Setup.} By Corollary~\ref{cor:transpose}'s dual-family remark, a
relation $R\in\B(N,X)$, viewed via its transpose, is the same data as an
$x$-indexed family of subsets of $N$, i.e.\ a function from $X$ to
$\PS(N)$; equivalently (fixing this viewpoint) $\alpha(n,x)$, the number of
$S_N\times S_X$-orbits, can be computed in two stages: first quotient by
$S_X$ alone (relabeling the $x$ indices), which by
Lemma~\ref{lem:coveringSN}'s argument applied with the roles of $N,X$
exchanged (i.e.\ its transpose-dual form) identifies $S_X$-orbits of
$x$-indexed families of subsets of $N$ with \emph{multisets of size $x$}
drawn from the $2^{n}$ ``colors'' $A\subseteq N$; then quotient this set of
multisets by the residual action of $S_N$, which acts on the color set
$\PS(N)$ via $\pi\mapsto\pi^{\ast}$ (since relabeling $N$ by $\pi$ sends the
subset-valued color $A$ to $\pi(A)$, i.e.\ acts on colors exactly through
$\pi^{\ast}$).

\emph{Applying Burnside to the residual $S_n$-action.} We must count
$S_n$-orbits (via $\pi\mapsto\pi^\ast$) on the set $M_x$ of multisets of
size $x$ from the $2^n$-element color set $\PS(N)$. By the
Cauchy--Frobenius--Burnside Lemma (Theorem~\ref{thm:burnside}),
\[
\alpha(n,x)=\frac1{n!}\sum_{\pi\in S_n}|\Fix_{M_x}(\pi^{\ast})|.
\]
A multiset of size $x$ (a multiplicity function $A\mapsto\alpha_A\ge0$ on
$\PS(N)$ with $\sum_A\alpha_A=x$) is fixed by $\pi^{\ast}$ iff its
multiplicity function is constant on the cycles of $\pi^{\ast}$ (identical
reasoning to Lemma~\ref{lem:coveringSN}'s orbit computation, now applied to
the permutation $\pi^\ast$ of the color set itself: fixedness under
relabeling by $\pi^\ast$ means $\alpha_{\pi^\ast(A)}=\alpha_A$ for every
color $A$, i.e.\ $\alpha$ is constant along each cycle of $\pi^\ast$).
Hence a fixed multiset amounts to choosing, for each cycle $c$ of
$\pi^{\ast}$ (of some length $d$), a single multiplicity $m_c\ge0$ to be
assigned uniformly to every color in that cycle, contributing $d\cdot m_c$
to the total size $x$ (since the multiplicity $m_c$ is repeated across all
$d$ colors of the cycle). So fixed multisets of size $x$ correspond to
nonnegative-integer solutions, over all cycles $c$ of $\pi^{\ast}$ (of
respective lengths $d(c)$), of $\sum_cd(c)m_c=x$; grouping cycles by common
length $d$ (there are $c_d(\pi^\ast)$ cycles of length $d$), the OGF (in
$w$ marking $x$) of the number of fixed multisets of each size is
\[
\prod_{d\ge1}\Bigl(\sum_{m\ge0}w^{dm}\Bigr)^{c_d(\pi^\ast)}
=\prod_{d\ge1}\bigl(1-w^{d}\bigr)^{-c_d(\pi^{\ast})}
\]
by the geometric series (Proposition~\ref{prop:geomseries}, in the variable
$w^{d}$) raised to the power $c_d(\pi^\ast)$ for each length $d$
independently (multiplication principle for independent cycles, combined
with the OGF product rule of Chapter~\ref{ch:gf}). Summing this OGF over
$\pi\in S_n$ and dividing by $n!$ (as in the Burnside average) gives
exactly \eqref{eq:rationalGF}: the coefficient of $w^{x}$ in the right side
of \eqref{eq:rationalGF} is $\frac1{n!}\sum_\pi|\Fix_{M_x}(\pi^\ast)|
=\alpha(n,x)$.

\emph{Rationality and the pole at $w=1$.} Each factor
$\prod_{d\ge1}(1-w^d)^{-c_d(\pi^\ast)}$ is a finite product (since
$\pi^{\ast}$, a permutation of the finite $2^n$-element set $\PS(N)$, has
only finitely many cycles, of lengths $d\le2^n$, and $c_d(\pi^\ast)=0$ for
$d>2^n$), hence a rational function of $w$ with poles only at roots of
unity (the roots of each $1-w^{d}$ are the $d$-th roots of unity). A finite
sum of such rational functions (over $\pi\in S_n$) is again rational with
poles only at roots of unity. The identity permutation $\pi=\mathrm{id}$
has $\pi^{\ast}=\mathrm{id}$ on $\PS(N)$, i.e.\ $2^{n}$ cycles of length
$1$ (every color is its own fixed point), contributing the term
$\frac1{n!}(1-w)^{-2^{n}}$: this has a pole at $w=1$ of order $2^{n}$,
which is maximal among all summands, since any other $\pi\neq\mathrm{id}$
has $\pi^\ast$ with strictly fewer than $2^n$ fixed colors (a nontrivial
permutation of $N$ moves at least one subset of $N$: if $\pi\neq
\mathrm{id}$ fixes every subset of $N$ then in particular it fixes every
singleton $\{i\}$, forcing $\pi(i)=i$ for all $i$, i.e.\ $\pi=\mathrm{id}$,
a contradiction), hence strictly fewer $1$-cycles among the cycles of
$\pi^\ast$, giving a pole at $w=1$ of order $c_1(\pi^\ast)<2^n$ for that
term's contribution (recalling that only the length-$1$ factor
$(1-w)^{-c_1(\pi^\ast)}$ contributes a pole exactly at $w=1$ among the
factors $(1-w^d)^{-c_d(\pi^\ast)}$ for $d\ge2$, which have poles at other
roots of unity, though possibly \emph{also} contributing multiplicity to
the point $w=1$ if $d=1$ divides... in fact every $(1-w^d)$ vanishes at
$w=1$ too, since $1-1^d=0$; so we must be more careful: the order of the
pole at $w=1$ of the full product $\prod_d(1-w^d)^{-c_d(\pi^\ast)}$ is
$\sum_dc_d(\pi^\ast)$, the \emph{total} number of cycles of $\pi^\ast$, not
just $c_1$, since every factor $(1-w^d)^{-1}$ has a simple pole at $w=1$
as well, because $w=1$ is always a root of $1-w^d$ for every $d\ge1$).
Correcting the argument: the order of the pole at $w=1$ contributed by
$\pi$ is $\sum_{d\ge1}c_d(\pi^\ast)$, the total number of cycles of
$\pi^\ast$ on the $2^n$-element set $\PS(N)$; this total is maximized
exactly when $\pi^\ast$ has as many cycles as possible, which (since more
cycles means shorter average cycle length, and the total number of moved
points is $2^n$) occurs exactly when every cycle has length $1$, i.e.\
$\pi^\ast=\mathrm{id}$, i.e.\ $\pi=\mathrm{id}$ (by the fixed-singleton
argument above); this is the unique maximizer, giving total cycle count
$2^{n}$ (the maximum possible, one cycle per color), strictly greater than
the total cycle count of $\pi^\ast$ for any $\pi\neq\mathrm{id}$ (a
permutation with a cycle of length $\ge2$ has strictly fewer total cycles
than the identity on the same ground set, since replacing a length-$d$
cycle, $d\ge2$, by $d$ fixed points strictly increases the cycle count by
$d-1>0$, and $\pi^\ast\ne\mathrm{id}$ has \emph{some} cycle of length
$\ge2$ whenever $\pi\ne\mathrm{id}$, as just shown). Hence the identity
contributes the unique term with a pole of maximal order $2^{n}$ at $w=1$,
so by Proposition~\ref{prop:rationaltoqp}, $\alpha(n,x)$ is a
quasi-polynomial of degree $2^{n}-1$ (one less than the maximal pole
order), with period dividing the least common multiple of all cycle
lengths $d$ occurring (with $c_d(\pi^\ast)\neq0$) for any $\pi\in S_n$.
\end{proof}

\begin{example}\label{ex:n2rational}
Let $n=2$, so $N=\{1,2\}$ and $\PS(N)$ has $2^{2}=4$ elements: $\varnothing,
\{1\},\{2\},\{1,2\}$. The identity $\pi=\mathrm{id}\in S_2$ induces
$\pi^{\ast}=\mathrm{id}$ on $\PS(N)$, with $c_1(\pi^\ast)=4$ (four fixed
colors, no other cycles). The transposition $\pi=(12)$ induces
$\pi^{\ast}$ fixing $\varnothing$ and $\{1,2\}$ (both invariant as sets
under swapping $1,2$) while swapping $\{1\}\leftrightarrow\{2\}$; so
$\pi^\ast$ has cycle type: two $1$-cycles ($c_1=2$) and one $2$-cycle
($c_2=1$). By \eqref{eq:rationalGF} with $n!=2$,
\[
\sum_{x\ge0}\alpha(2,x)w^{x}=\frac12\left[\frac1{(1-w)^{4}}
+\frac1{(1-w)^{2}(1-w^{2})}\right].
\]
Extracting the coefficient of $w^{x}$ using
Corollary~\ref{cor:multiset-ogf} (for the first term, $c=4$) and a partial
fraction decomposition of the second term (whose details we omit, being a
routine rational-function computation), one obtains the explicit
quasi-polynomial
\[
\alpha(2,x)=\frac12\binom{x+3}{3}+\frac12\left\lfloor\frac{(x+2)^{2}}4
\right\rfloor,
\]
of degree $3=2^{2}-1$ and period $2$ (matching
Theorem~\ref{thm:rationalgf}'s prediction, since the only cycle lengths
occurring among $\pi^{\ast}$ for $\pi\in S_2$ are $d=1,2$, giving predicted
period dividing $\mathrm{lcm}\{1,2\}=2$). Its values for $x=0,1,2,3,4,5$
are $1,3,7,13,22,34,\dots$, matching direct enumeration of the
$S_2\times S_x$-orbits of $2\times x$ binary relations for small $x$.
\end{example}

\chapter{Closing Outlook: The Bridge to the \texorpdfstring{$q$}{q}-Analog}
\label{ch:bridge}

We close with a brief, self-contained remark connecting this classical
table to its $q$-analog, developed in a separate paper
(\emph{A $q$-analog of the twelvefold way: orbits, generating functions,
and bijections}), without depending on any result from that paper. The
$q$-analog replaces finite sets by finite-dimensional vector spaces over a
finite field $\mathbb F_q$, functions by linear maps, and symmetric groups
by general linear groups $GL(n,q)$; it enumerates the twelve resulting
orbit types (all combinations of injective/surjective/arbitrary on two
sides, for the group actions $GL(U)$, $GL(V)$, $GL(U)\times GL(V)$) and
determines their generating functions.

Three of the generating functions found in Part~II of the present book
reappear there, at $q=1$, as the classical shadow of a $q$-deformed
identity:
\begin{enumerate}
\item The partial-injection bivariate series \eqref{eq:ilgf},
$1/\bigl((1-z)(1-w-zw)\bigr)$, is the $q\to1$ limit of a corresponding
$q$-series for one-sided orbit counts of subspace-rank data in the
$q$-analog.
\item The two-sided series \eqref{eq:mingf}, $1/\bigl((1-z)(1-w)(1-zw)
\bigr)$, is \emph{identical} to its $q$-analog: in both settings, the
two-sided count of the injectivity-constrained row is $1+\min\{n,x\}$,
generated by the same $q$-independent product; there, the roles of
``unused index''/``unused element''/``matched pair'' are played by
nullity, corank, and rank of a linear map.
\item Euler's product \eqref{eq:eulerproduct} for the unitary row is the
$q$-exponential of $q$-series theory: the same infinite product that here
generates classical set-partition counts governs, with the roles of the
two variables exchanged, the Galois numbers (subspace-counting analog of
Bell numbers) in the $q$-analog.
\end{enumerate}
Taken together with the $q$-analog paper, the two tables --- relations
under symmetric groups (this book), linear maps under general linear
groups (the companion paper) --- are governed by a single analytic engine,
viewed at $q=1$ and at general $q$ respectively; but every result of the
present book stands independently of that connection, which is offered
here only as a closing remark for readers curious where the story
continues.

\appendix

\chapter{Table of Notation}

\begin{center}
\renewcommand{\arraystretch}{1.3}
\begin{tabular}{@{}ll@{}}
\toprule
Symbol & Meaning\\
\midrule
$N,X$ & finite sets, $n=|N|$, $x=|X|$\\
$\PS(N)$ & power set (Boolean algebra of subsets) of $N$\\
$\B(N,X)$ & binary relations $R\subseteq N\times X$\\
$\mathrm{Fam}_n(X)$ & $n$-families of subsets of $X$\\
$\mathrm{Hom}(\PS(N),\PS(X))$ & lattice ($\cup$-)homomorphisms\\
$\Ir_r,\Sr_r$ & right injective, right surjective relations\\
$\Ir_{lr},\Sr_{lr}$ & left-and-right injective (partial injections),
surjective (hypergraphs)\\
$S_N,S_X,S_N\times S_X$ & symmetric groups and their product, acting by
relabeling\\
$\Orb(s),\Stab(s)$ & orbit, stabilizer of $s$ under a group action\\
$\Fix_S(g)$ & elements of $S$ fixed by $g$\\
$\binom xk$ & binomial coefficient\\
$\ffall xk$ & falling factorial $x(x-1)\cdots(x-k+1)$\\
$\stir xk$ & Stirling number of the second kind\\
$B_x$ & Bell number\\
$p_l(k)$, $p_{\le n}(x)$ & partitions of $k$ into $l$ parts; of $x$ into at
most $n$ parts\\
$z,w$ & formal variables marking $n,x$ respectively\\
$[z^n]A(z)$ & coefficient of $z^n$ in the series $A(z)$ (used
informally in prose)\\
\bottomrule
\end{tabular}
\end{center}

\end{document}